\numberwithin{equation}{section}
\newtheorem{theorem}{Theorem}[section]
\newtheorem{corollary}[theorem]{Corollary}
\newtheorem{lemma}[theorem]{Lemma}
\newtheorem{proposition}[theorem]{Proposition}
\newtheorem{definition}[theorem]{Definition}
\theoremstyle{remark}
\newtheorem{remark}[theorem]{Remark}
\theoremstyle{remark}
\DeclareMathOperator{\lip}{lip}
\DeclareMathOperator{\LIP}{LIP}
\newcommand{\apmd}[2][]{							
	\ifthenelse{\equal{#1}{}}%
					{ \operatorname{N}_{#2}	}%
					{ \operatorname{N}_{#1,#2} 	}}
\newcommand{\aint}[2][]{
	\ifthenelse{\equal{#1}{}}%
					{%
\mathchoice%
      {\mathop{\kern 0.2em\vrule width 0.6em height 0.69678ex depth -0.58065ex
              \kern -0.8em \intop}\nolimits_{\kern -0.45em#2}^{#1}}%
      {\mathop{\kern 0.1em\vrule width 0.5em height 0.69678ex depth -0.60387ex
              \kern -0.6em \intop}\nolimits_{#2}^{#1}}%
      {\mathop{\kern 0.1em\vrule width 0.5em height 0.69678ex depth -0.60387ex
              \kern -0.6em \intop}\nolimits_{#2}^{#1}}%
      {\mathop{\kern 0.1em\vrule width 0.5em height 0.69678ex depth -0.60387ex
              \kern -0.6em \intop}\nolimits_{#2}^{#1}}}%
					{%
\mathchoice%
      {\mathop{\kern 0.2em\vrule width 0.6em height 0.69678ex depth -0.58065ex
              \kern -0.8em \intop}\nolimits_{\kern -0.45em#1}^{#2}}%
      {\mathop{\kern 0.1em\vrule width 0.5em height 0.69678ex depth -0.60387ex
              \kern -0.6em \intop}\nolimits_{#1}^{#2}}%
      {\mathop{\kern 0.1em\vrule width 0.5em height 0.69678ex depth -0.60387ex
              \kern -0.6em \intop}\nolimits_{#1}^{#2}}%
      {\mathop{\kern 0.1em\vrule width 0.5em height 0.69678ex depth -0.60387ex
              \kern -0.6em \intop}\nolimits_{#1}^{#2}}}}
\begin{document}
\title[Pushforward of currents]{Pushforward of currents under Sobolev maps}

\author{Toni Ikonen} 
\address{Department of Mathematics and Statistics, P. O. Box 68 (Pietari Kalmin katu 5), FI-00014 University of Helsinki, Finland}

\email{toni.ikonen@helsinki.fi}

\keywords{calibration, cochain, integral current, isoperimetric inequality, pushforward, quasiregular, Sobolev}
\thanks{This work was supported by the Academy of Finland, project number 332671. \newline {\it 2020 Mathematics Subject Classification.} Primary: 46E36. Secondary: 30C65, 49Q15, 53C65}


\begin{abstract}
We prove that a Sobolev map from a Riemannian manifold into a complete metric space pushes forward almost every compactly supported integral current to an Ambrosio--Kirchheim integral current in the metric target, where "almost every" is understood in a modulus sense.

As an application, we prove that when the target supports an isoperimetric inequality of Euclidean type for integral currents, an isoperimetric inequality for Sobolev mappings relative to bounded, closed and additive cochains follows. 

Using the results above, we answer positively to an open question by Onninen and Pankka on sharp Hölder continuity for quasiregular curves. A key tool in the continuity proof is Almgren's isoperimetric inequality for integral currents.
\end{abstract}

\maketitle\thispagestyle{empty}

\section{Introduction}\label{sec:intro}

\subsection{Pushforward and Sobolev maps}
Given Riemannian manifolds $M$ and $N$, Federer--Fleming integral currents $\mathbf{I}_{k,c}( M )$ \cite{Fed:Fle:60}, for every $k = 0,1,\dots$, can be pushed forward using smooth or Lipschitz mappings $f \colon M \to N$. We prove a Sobolev analog of the statement. We use a notion of an exceptional collection of integral currents in the spirit of Fuglede \cite{Fug:57}. To this end, given a Borel function $\rho_0 \in \mathcal{L}^{p}_+( M )$, we say that
\begin{align*}
    \mathcal{F}( \rho_0 )
    \coloneqq
    \bigcup_{ k \in \left\{ 1, \dots, \lfloor p \rfloor \right\} }
    &\left\{ 
        T\in \mathbf{I}_{k,c}( M )
        \colon
        \int_{M} \rho_0^{ k } \,d\|T\|
        +
        \int_{M} \rho_{0}^{ k-1 } \,d\| \partial T \|
        =
        \infty
    \right\}
    \\
    \cup
    &\left\{ T \in \mathbf{I}_{0,c}( M ) \colon \int_{M} \rho_0^{0} \,d\|T\| = \infty \right\}
\end{align*}
is \emph{$p$-exceptional}, where the conventions $0^{0} = 0$ and $\infty^0 = \infty$ are used.

We consider the \emph{reduced chain complex $( \mathbf{I}_{\star, c}( \rho_0 ), \partial )$} consisting of the direct sum of the $\mathbb{Z}$-modules
\begin{equation*}
    \mathbf{I}_{k,c}( \rho_0 )
    =
    \mathbf{I}_{k,c}( M ) \setminus \mathcal{F}( \rho_0 ),
    \quad
    k \in \left\{0,1,\dots,\lfloor p \rfloor\right\}
\end{equation*}
and the usual boundary operation of currents, denoted $\partial$.

Our main result, \Cref{thm:pushforward:intro} below, states the existence of a chain morphism
\begin{equation*}
    f_\star \colon ( \mathbf{I}_{\star, c}( \rho_0 ), \partial )
    \to
    ( \mathbf{I}_\star( N ), \partial ),
\end{equation*}
induced by a Sobolev map $f \in W^{1,p}( M, N )$ from $( \mathbf{I}_{\star, c}( \rho_0 ), \partial )$ into the chain complex $( \mathbf{I}_{\star}( N ), \partial )$ of integral currents in the sense of Ambrosio--Kirchheim \cite{AK:00:current}.

Proving the existence of the pushforward involves approximating Sobolev mappings by Lipschitz mappings and for this reason we first need to isometrically embed $N$ into an injective Banach space, by using a Kuratowski embedding, for example. The embedding is used since there are topological obstructions for $N$-valued Lipschitz mappings being dense in the Sobolev spaces of interest, cf. \cite{Bet:91,Hang:Lin:03,Haj:09,Haj:Sch:14,Bou:Pon:VanSc:17}. The core of our argument involves Sobolev theory tailored for metric spaces and for mappings taking values in injective Banach spaces. In fact, replacing $N$ by a complete Lipschitz manifold, or any complete metric space for that matter, poses no further challenge.
\begin{theorem}\label{thm:pushforward:intro}
Suppose that $M$ is a Riemannian manifold, with or without boundary, and $N$ is a complete metric space and $f \in W^{1,p}_{loc}( M, N )$ for $1 \leq p < \infty$.

Then there exist a Borel representative $( \widehat{f}, \widehat{\rho} )$ of $( f, |df| )$ and Borel $\widehat{\rho}_0 \in \mathcal{L}^{p}_+( M )$ together with a chain morphism
\begin{equation}\label{eq:pushforward:intro}
    \widehat{f}_\star \colon ( \mathbf{I}_{\star, c}( \widehat{\rho}_0 ), \partial ) \to ( \mathbf{I}_{\star}( N ), \partial )
\end{equation}
for which
\begin{equation}\label{eq:mass-energy:intro}
    \| \widehat{f}_\star T \|( E )
    \leq
    \int_{ \widehat{f}^{-1}(E) } \widehat{\rho}^k \,d\|T\|
    <
    \infty
    \quad\text{for Borel $E \subset N$},
\end{equation}
whenever $T \in \mathbf{I}_{k,c}( \widehat{\rho}_0 )$ and $k \in \left\{ 0,1,\dots, \lfloor p \rfloor \right\}$.

The chain morphism has the further property that for every isometric embedding $\iota \colon N \to \mathbb{V}$ to 
 an injective Banach space $\mathbb{V}$, there is a Borel decomposition sets $( E_i )_{ i = 0 }^{ \infty }$ of $M$, with negligible $E_0 \supset \left\{ \rho_0 = \infty \right\}$, and Lipschitz mappings $f_i \colon M \to \mathbb{V}$ extending $( \iota \circ f )|_{ E_i }$ for $i \geq 1$, for which
\begin{equation}\label{eq:pointwiseconvegence}
    \iota_\sharp\left( \widehat{f}_\star T \right)
    =
    \sum_{ i = 1 }^{ \infty }
        ( f_i )_\sharp(  T\llcorner E_i )
    \quad\text{for every $T \in \mathbf{I}_{\star,c}( \widehat{\rho}_0 )$.}
\end{equation}
\end{theorem}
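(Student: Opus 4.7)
The plan is to reduce to the Lipschitz case via a Lusin-type decomposition, organize the resulting sum so that it converges in mass, and then use locality plus approximation to obtain the chain morphism property. First, I would pick an isometric embedding $\iota \colon N \hookrightarrow \mathbb{V}$ into an injective Banach space (Kuratowski into $\ell^{\infty}$, say), since producing the pushforward for $\iota \circ f$ in $\mathbf{I}_{\star}(\mathbb{V})$ and checking that it is supported in the closed set $\iota(N)$ suffices to define $\widehat{f}_{\star}$. Using the Lusin-type approximation available for Sobolev maps into injective Banach spaces, I would produce a Borel decomposition $M = \bigsqcup_{i \geq 0} E_i$, with $E_0$ Lebesgue-negligible, together with globally Lipschitz maps $f_i \colon M \to \mathbb{V}$ that agree with $\iota \circ \widehat{f}$ on $E_i$ and whose Kirchheim metric differential $\md(f_i)$ coincides with $\widehat{\rho}$ almost everywhere on $E_i$.

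Having made this choice, I would take $\widehat{\rho}_0 \in \mathcal{L}^{p}_+(M)$ pointwise dominating $\widehat{\rho}$ and large enough that $T \in \mathbf{I}_{k,c}(\widehat{\rho}_0)$ rules out any concentration of $\|T\|$ or $\|\partial T\|$ on the exceptional set $E_0$, e.g.\ by setting $\widehat{\rho}_0 \equiv +\infty$ on $E_0$. For such $T$ I would define
\[
    \widehat{f}_{\star} T \;:=\; \sum_{i=1}^{\infty} (f_i)_{\sharp}(T \llcorner E_i)
\]
as a current in $\mathbb{V}$. The Ambrosio--Kirchheim mass formula together with $\md(f_i) = \widehat{\rho}$ on $E_i$ yields $\|(f_i)_{\sharp}(T\llcorner E_i)\|(E) \leq \int_{E_i\cap f_i^{-1}(E)}\widehat{\rho}^{k}\,d\|T\|$; the sum therefore converges in mass with total mass bounded by $\int_M \widehat{\rho}^{k}\,d\|T\| < \infty$, proving \eqref{eq:mass-energy:intro}. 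An identical bound using $\widehat{\rho}^{k-1}$ and $\|\partial T\|$ controls the boundary mass and shows that $\partial T$ is again non-exceptional. Invoking locality of the Ambrosio--Kirchheim pushforward (Lipschitz maps agreeing on a set induce the same pushforward of restrictions to that set), any two Borel decompositions admit a common refinement producing the same current in $\mathbb{V}$; each summand is supported on $f_i(E_i) \subset \iota(N)$, so $\iota_{\sharp}\widehat{f}_{\star} T$ is carried by $\iota(N)$ and descends to a well-defined element of $\mathbf{I}_{\star}(N)$, which is the content of \eqref{eq:pointwiseconvegence}.

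For the chain morphism identity $\partial \widehat{f}_{\star} T = \widehat{f}_{\star}\partial T$, the cleanest route is approximation by globally Lipschitz maps: choose $g_n \colon M \to \mathbb{V}$ agreeing with $\iota \circ \widehat{f}$ on the growing sets $F_n := \bigcup_{i \leq n} E_i$, and use the mass bound on $M \setminus F_n$ to conclude that $(g_n)_{\sharp} T \to \widehat{f}_{\star} T$ and $(g_n)_{\sharp}\partial T \to \widehat{f}_{\star}\partial T$ in mass as $n \to \infty$. Since $\partial \circ (g_n)_{\sharp} = (g_n)_{\sharp}\circ \partial$ holds at the Lipschitz level, passing to the limit gives the identity. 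The main obstacle, in my view, is coupling the Lusin-type approximation with the pointwise identification $\md(f_i) = \widehat{\rho}$ on $E_i$ in a way that is compatible with a Fuglede-type modulus exceptional class. Concretely, $\widehat{\rho}$ must simultaneously control the metric differentials of every Lipschitz piece $f_i$ for every non-exceptional $T$, which forces $\widehat{\rho}_0$ to carry enough slack to absorb Lusin-style error terms while remaining in $\mathcal{L}^{p}_+(M)$; balancing these two requirements is the heart of the argument.
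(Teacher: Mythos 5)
Your overall architecture is aligned with the paper's: embed $N$ into an injective Banach space $\mathbb{V}$, exhibit $\widehat{f}_\star T$ as a sum of pushforwards of restrictions along a Lusin--Lipschitz decomposition, prove the chain-morphism identity by Lipschitz approximation, and use locality to glue. However, there is a genuine gap at the one step you yourself flag as ``the heart of the argument,'' and it is deeper than a balancing act.

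The gap is the mass--energy inequality with the \emph{minimal} $p$-weak upper gradient $\widehat{\rho} = |df|$. You assert that ``the Ambrosio--Kirchheim mass formula together with $\md(f_i) = \widehat{\rho}$ on $E_i$'' gives $\|(f_i)_\sharp(T\llcorner E_i)\|(E) \leq \int_{E_i \cap f_i^{-1}(E)} \widehat{\rho}^k\,d\|T\|$. That step is not available. First, the Lusin--Lipschitz theorem (Haj\l{}asz gradient truncation plus McShane) delivers Lipschitz pieces with $\LIP(f_i|_{E_i}) \leq \lambda_i$, but it does not identify the Kirchheim metric differential of the \emph{extension} $f_i$ at points of $E_i$ with $\widehat{\rho}$; the metric differential is a seminorm, not a scalar, and it in general only dominates $\widehat{\rho}$ in operator norm. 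Second, and more to the point, the pushforward mass bound that is actually usable here, $\|(f_i)_\sharp S\| \leq (f_i)_\sharp(\lip(f_i)^k\|S\|)$ (equation \eqref{eq:masspushforward}), is stated in terms of the \emph{pointwise Lipschitz constant} $\lip(f_i)$ of the extension on $M$, which can be strictly larger than $\widehat{\rho}(x)$ at a point $x \in E_i$ even after a McShane extension, since it sees the behaviour of $f_i$ off $E_i$. So the naive sum gives a bound with $\lip(f_i)^k$ or $\lambda_i^k$, not with $\widehat{\rho}^k$, and the sharp inequality \eqref{eq:mass-energy:intro} does not follow.

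The paper closes this gap in a structurally different way that you do not reproduce. Instead of a single Lusin--Lipschitz decomposition of $\widehat{f}$, it first invokes a nontrivial \emph{energy-density} theorem (\Cref{lemm:energydensity}, building on \cite{EB:Sou:21}): there is a sequence of globally Lipschitz $f_j \colon M \to \mathbb{V}$ with $f_j \to f$ and $\lip(f_j) \to |df|$ in $L^p$ with a summable rate. The exceptional function $\rho_0$ is then assembled from these $f_j$ (together with the failure set of the pointwise convergence, a Lusin bad set, and the auxiliary decomposition), so that for every $T \notin \mathcal{F}(\rho_0)$ one actually has $(f_j, \lip(f_j)) \to (\widehat{f}, \widehat{\rho})$ in $\mathcal{L}^k(M, \mathbb{V}; \|T\|) \times \mathcal{L}^k(M; \|T\|)$ by dominated convergence. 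The mass--energy bound \eqref{eq:mass-energy:intro} then follows by lower semicontinuity of mass applied to $(f_j)_\sharp T \to \widehat{f}_\star T$ --- not from a pointwise Jacobian identification. Likewise, the Lusin--Lipschitz decomposition the paper uses is that of the auxiliary map $H = (f_j)_{j\geq 0}$ into $\ell^\infty(\mathbb{V})$, not of $\widehat{f}$ alone; this is what makes all the approximants $f_j$, restricted to the good sets $B_i$, simultaneously Lipschitz with a common constant and uniformly convergent, which is needed to apply the continuity axiom of currents and to prove well-definedness (\textcolor{blue}{Properties (5), (6)}). Finally, integrality of the limit is obtained via Prokhorov tightness together with the Ambrosio--Kirchheim compactness and closure theorems, not purely from mass convergence of the partial sums; you would need to verify that the $(g_n)_\sharp T$ have equibounded boundary masses, and that is exactly where the careful choice of $\rho_0$ and the $L^p$-control on $\lip(g_n)$ enters.

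In short: the decomposition-and-sum formula and the chain-morphism-by-approximation idea are right, but you need the energy-dense Lipschitz approximation and the Fuglede-type convergence in $\mathcal{L}^k(\|T\|)$ as the mechanism for the sharp bound with $\widehat{\rho}^k$; a bare Lusin--Lipschitz decomposition plus the pushforward mass estimate will not produce it.
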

When applying \Cref{thm:pushforward:intro}, we identify $\widehat{f}$ with $f$ and $\widehat{\rho}$ with the $p$-minimal $p$-weak upper gradient $|df|$, with the understanding that \Cref{thm:pushforward:intro} is applied using the representatives $( \widehat{f}, \widehat{\rho} )$ as above.

The notation $h_\sharp$ for Lipschitz $h$ refers to the usual pushforward along Lipschitz mappings, cf. \Cref{sec:current:cochain}, and equality \eqref{eq:pointwiseconvegence} is intended as a pointwise convergence of the sequence of currents $\sum_{ i = 1 }^{ l } ( f_i )_\sharp(  T\llcorner E_i )$, $l \in \mathbb{N}$, to $\iota_\sharp\left( \widehat{f}_\star T \right)$. This technical condition, and standard locality properties of currents, guarantees that for any pair of chain maps satisfying the conclusions of \Cref{thm:pushforward:intro}, there is $\rho_0 \in \mathcal{L}^p_{+}( M )$ such that the pair of chain maps are well-defined and equal in $( \mathbf{I}_{\star, c}( \rho_0 ), \partial )$. Observe that the set $E_0$ plays no role in \eqref{eq:pointwiseconvegence}.

The statement above uses the integral currents of Ambrosio and Kirchheim, while the Sobolev theory follows the upper gradient approach due to Heinonen and Koskela \cite{Hei:Kos:98,Sha:00,Hei:Kos:Sha:Ty:01}. We recall that in Riemannian manifolds, the classes of integral currents in the sense of Federer--Fleming and compactly supported integral currents in the sense of Ambrosio--Kirchheim coincide, cf. \cite[Theorem 11.1]{AK:00:current}, as do the upper gradient and Reshetnyak approaches to metric-valued Sobolev theory, cf. \cite{Res:97,Sha:00,Hei:Kos:Sha:Ty:01,Hei:Kos:Sha:Ty:15}.

\subsection{Pullback of cochains}
It is well-understood that Sobolev maps between Riemannian manifolds induce a \emph{pullback} $f^{\star}$ of bounded differential forms, defined using the weak differential of Sobolev mappings.

The pullback $f^{\star}$ induces a chain morphism from the (restricted) de Rham complex of bounded and smooth differential forms to the de Rham complex of Sobolev differential forms; various authors have considered cohomology theories based on the Sobolev complexes, see, for example, \cite{Don:Sul:89,Iwa:Mar:93,Pan:08,Haj:Iwa:Mal:On,Gol:Tro:10,Kan:Pan:19}. \Cref{thm:pushforward:intro} leads to a similar theory on the homology level.

We establish a pullback version of \Cref{thm:pushforward:intro} in the metric setting, partially motivated by the Riemannian results above. We formulate the pullback using the metric cochains due to Rajala and Wenger \cite{Raj:Wen:13}.

Given a metric space $N$ and $k \in \mathbb{N}$, we consider a $\mathbb{Z}$-module $\mathcal{C} \subset \mathbf{I}_{k}(N)$. We say that
\begin{equation}\label{eq:cochain}
    \Omega \colon \mathcal{C} \to [-\infty, \infty]
\end{equation}
is a \emph{subadditive cochain} if $\Omega(0) = 0$ and
\begin{equation*}
    | \Omega( T ) |
    \leq
    | \Omega( T + S ) |
    +
    | \Omega( S ) |
    \quad\text{for every $T,S \in \mathcal{C}$.}
\end{equation*}
If also $\Omega( T + S ) = \Omega( T ) + \Omega( S )$ whenever each term is finite, we say that $\Omega$ is \emph{additive}. In case $\Omega( T )$ is always finite, we say that $\Omega$ is \emph{finite}.

A subadditive $\Omega$ has an \emph{upper norm} $\rho \colon N \to [0,\infty]$ if $\rho$ is Borel and
\begin{equation*}
    \quad
    | \Omega( T ) |
    \leq
    \int_{ N }
        \rho
    \,d\|T\|
    \quad\text{for all $T \in \mathcal{C}$.}
\end{equation*}
It is crucial for us later, see \Cref{thm:yes} below, that the mass measure $\|T\|$ is defined in the Ambrosio--Kirchheim sense. Following \cite{Raj:Wen:13}, we denote by $\mathcal{L}_{\infty}( \mathcal{C} )$ the collection of all \emph{subadditive} cochains with a bounded upper norm. 

Using the notation from \Cref{thm:pushforward:intro}, we set
\begin{equation}\label{eq:pullbackdefinition}
    f^{\star}\Omega( T )
    \coloneqq
    \Omega( f_\star T ),
    \quad
    ( T, \Omega ) \in \mathbf{I}_{k,c}( \widehat{\rho}_0 ) \times \mathcal{L}_{\infty}( \mathbf{I}_{k}(N) ), \quad k \in \left\{0, \dots, \lfloor p \rfloor \right\}.
\end{equation}
By \Cref{thm:pushforward:intro}, we obtain the following.
\begin{corollary}[Pullback]\label{thm:pullback}
Let $M$ be a Riemannian manifold, with or without boundary, $N$ a complete metric space and $f \in W^{1,p}_{loc}( M, N )$, $1 \leq p < \infty$. If $k \in \left\{ 0,1,\dots, \lfloor p \rfloor \right\}$ and $\Omega \in \mathcal{L}_{\infty}( \mathbf{I}_{k}(N) )$ has a bounded upper norm $\rho$, the finite subadditive cochain
\begin{equation*}
    f^{\star}\Omega \colon \mathbf{I}_{k,c}( \widehat{\rho}_0 ) \to \mathbb{R}
\end{equation*}
has an upper norm $( \rho \circ f )|df|^{k} \in \mathcal{L}^{p/k}_{loc}( M )$. In case $\Omega$ is additive, so is the pullback.
\end{corollary}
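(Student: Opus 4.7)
The plan is to deduce the corollary directly from \Cref{thm:pushforward:intro} together with the definitions of a subadditive cochain and its upper norm; nothing further about the structure of $N$ or $f$ will be needed. Fix $T \in \mathbf{I}_{k,c}(\widehat{\rho}_0)$. By \Cref{thm:pushforward:intro}, $f_\star T \in \mathbf{I}_k(N)$ is an integral current with $\|f_\star T\|(N) < \infty$, so $\Omega(f_\star T)$ is a well-defined extended real number and \eqref{eq:pullbackdefinition} unambiguously defines $f^\star\Omega(T)$.

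The technical heart of the argument is to upgrade the set-level bound \eqref{eq:mass-energy:intro} to the functional inequality
\[
    \int_N \rho \, d\|f_\star T\| \leq \int_M (\rho\circ f)|df|^k \, d\|T\|
\]
for every Borel $\rho \colon N \to [0,\infty]$. I would first verify this for nonnegative Borel simple functions $\sigma = \sum_i c_i \chi_{E_i}$ by linearity of the integral (applying \eqref{eq:mass-energy:intro} to each level set $E_i$), then pass to an arbitrary Borel $\rho$ via monotone convergence, using that $\rho \circ \widehat{f}$ is Borel since $\widehat{f}$ is a Borel representative. Combined with the upper-norm hypothesis $|\Omega(S)| \leq \int_N \rho\,d\|S\|$ applied at $S = f_\star T$, this yields the stated upper norm for $f^\star\Omega$.

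Finiteness of $f^\star\Omega(T)$ is then automatic: the upper norm $\rho$ is bounded and $\|f_\star T\|(N) \leq \int_M |df|^k d\|T\| < \infty$ by \eqref{eq:mass-energy:intro}. Subadditivity (and additivity, when $\Omega$ is additive) transfers from $\Omega$ to $f^\star \Omega$ because $f_\star$ is, by \Cref{thm:pushforward:intro}, a $\mathbb{Z}$-linear chain morphism, so $f_\star(T + S) = f_\star T + f_\star S$ and the defining (in)equality for $\Omega$ passes through verbatim. Finally, local $\mathcal{L}^{p/k}$-integrability of $(\rho\circ f)|df|^k$ follows from $\|\rho\|_\infty < \infty$ and $|df| \in \mathcal{L}^p_{loc}(M)$, the latter being the defining property of $f \in W^{1,p}_{loc}(M,N)$; the case $k=0$ being trivial.

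The main — indeed the only — nonformal step is the change-of-variables upgrade of \eqref{eq:mass-energy:intro}, which is routine once the Borel measurability of $\widehat{f}$ is in hand. Every other assertion of the corollary follows formally from the chain-morphism structure and the mass bound supplied by \Cref{thm:pushforward:intro}.
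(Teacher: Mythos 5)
Your proposal is correct and follows essentially the same route as the paper: apply the mass-energy inequality \eqref{eq:mass-energy:intro} together with the upper-norm hypothesis for $\Omega$ evaluated at $\widehat{f}_\star T$, then observe that sub\-additivity/additivity passes through the chain morphism $\widehat{f}_\star$. The paper declares the change-of-variables upgrade of \eqref{eq:mass-energy:intro} to the functional inequality $\int_N \rho\,d\|\widehat{f}_\star T\| \leq \int_M (\rho\circ\widehat{f})\widehat{\rho}^k\,d\|T\|$ to be immediate, while you spell out the simple-functions-plus-monotone-convergence argument; this is only a difference in level of detail, not in method.
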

We use the convention $p/0 = \infty$ and recall the notation
\begin{equation*}
    |df|^{0}
    =
    \left\{
    \begin{split}
        &0, \quad&&\text{when $|df| = 0$,}
        \\
        &\infty, \quad&&\text{when $|df| = \infty$,}
        \\
        &1, \quad&&\text{otherwise}.
    \end{split}
    \right.
\end{equation*}
We refine the conclusion of \Cref{thm:pullback} in the case that the \emph{coboundary} of $\Omega \in \mathcal{L}_{ \infty }( \mathbf{I}_{k}(N) )$ is bounded. To this end, given a $\mathbb{Z}$-module $\mathcal{C}' \subset \mathbf{I}_{k+1}( N )$ with $\partial \mathcal{C}' \subset \mathcal{C}$, we call the map
\begin{equation*}
    \delta \Omega \colon \mathcal{C}' \to \mathbb{R},
    \quad S \mapsto \Omega( \partial S ),
\end{equation*}
the \emph{coboundary} of $\Omega$ (relative to $\mathcal{C}'$). We denote $\Omega \in \mathcal{W}_{\infty,\infty}( \mathcal{C}, \mathcal{C}' )$ in case $\Omega$ and its coboundary have a bounded upper norm. We say that $\Omega$ is \emph{closed} (relative to $\mathcal{C}'$) if the coboundary is identically zero.

When $N$ is a Riemannian manifold, we denote $\Omega \in \mathcal{W}_{\alpha,\beta,loc}( \mathcal{C}, \mathcal{C}' )$ in case $\Omega$ has a locally $\alpha$-integrable upper norm, $\partial \mathcal{C}' \subset \mathcal{C}$, and $\delta \Omega$ has a locally $\beta$-integrable upper norm. \Cref{thm:pullback} implies the following.
\begin{corollary}[Sobolev cochains]\label{thm:pullback:Sobolev}
Let $M$ be a Riemannian manifold, with or without boundary, $N$ a complete metric space and $f \in W^{1,p}_{loc}( M, N )$, $1 \leq p < \infty$. If $k \in \left\{ 0,1,\dots, \lfloor p \rfloor \right\}$ and $k \leq \mathrm{dim}(M)$, there is an $\mathbb{R}$-linear pullback
\begin{equation*}
    f^{\star}
    \colon
    \mathcal{W}_{\infty,\infty}( \mathbf{I}_{k}( N ), \mathbf{I}_{k+1}(N ) )
    \to
    \mathcal{W}_{p/k, p/(k+1), loc}( \mathbf{I}_{k,c}( \widehat{\rho}_0 ), \mathbf{I}_{k+1,c}( \widehat{\rho}_0 ) ).
\end{equation*}
In case $k = \mathrm{dim}(M)$, the pullback is always closed.
\end{corollary}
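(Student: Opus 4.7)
The plan is to define the pullback by the very formula $f^\star \Omega(T) \coloneqq \Omega(f_\star T)$ of \eqref{eq:pullbackdefinition} and to verify every structural requirement as a bookkeeping consequence of \Cref{thm:pushforward:intro} together with two applications of \Cref{thm:pullback}.

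Applying \Cref{thm:pullback} at level $k$ to the bounded-upper-norm cochain $\Omega$ produces a finite subadditive cochain $f^\star \Omega \colon \mathbf{I}_{k,c}(\widehat{\rho}_0) \to \mathbb{R}$ admitting the upper norm $(\rho \circ f)\abs{df}^{k}$, where $\rho$ is any bounded upper norm of $\Omega$; since $(\rho \circ f) \in \mathcal{L}^{\infty}(M)$ and $\abs{df}^{k} \in \mathcal{L}^{p/k}_{loc}(M)$, this delivers the level-$k$ integrability. The assignment $\Omega \mapsto f^\star \Omega$ is tautologically $\mathbb{R}$-linear in $\Omega$.

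The main identification is $\delta(f^\star \Omega) = f^\star(\delta \Omega)$ as cochains on $\mathbf{I}_{k+1,c}(\widehat{\rho}_0)$. For each $S \in \mathbf{I}_{k+1,c}(\widehat{\rho}_0)$ the definition of the $p$-exceptional collection $\mathcal{F}(\widehat{\rho}_0)$ forces $\partial S \in \mathbf{I}_{k,c}(\widehat{\rho}_0)$, so $f^\star \Omega(\partial S)$ is a legal evaluation; the chain morphism identity $\partial f_\star = f_\star \partial$ from \Cref{thm:pushforward:intro} then yields
\begin{equation*}
    \delta(f^\star \Omega)(S) = f^\star \Omega(\partial S) = \Omega(f_\star \partial S) = \Omega(\partial f_\star S) = (\delta \Omega)(f_\star S) = f^\star(\delta \Omega)(S).
\end{equation*}
A second invocation of \Cref{thm:pullback}, now applied to the subadditive, bounded-upper-norm cochain $\delta \Omega \in \mathcal{L}_\infty(\mathbf{I}_{k+1}(N))$ at level $k+1$, endows $\delta(f^\star \Omega) = f^\star(\delta \Omega)$ with the upper norm $(\rho' \circ f)\abs{df}^{k+1} \in \mathcal{L}^{p/(k+1)}_{loc}(M)$ for any bounded upper norm $\rho'$ of $\delta \Omega$, completing the placement of $f^\star \Omega$ inside $\mathcal{W}_{p/k,p/(k+1),loc}$. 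In the boundary case $k = \dim(M)$, no nonzero integral currents of dimension $k+1$ exist in $M$, so $\mathbf{I}_{k+1,c}(\widehat{\rho}_0) = \{0\}$ and $\delta(f^\star\Omega) \equiv 0$, yielding the closedness claim.

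I do not anticipate a serious obstacle: the analytically nontrivial content---Lipschitz approximation of $f_\star$ via \eqref{eq:pointwiseconvegence} and the mass bound \eqref{eq:mass-energy:intro}---has already been discharged inside \Cref{thm:pushforward:intro,thm:pullback}, and the present corollary reduces to a homological bookkeeping exercise built around the chain morphism property of $f_\star$.
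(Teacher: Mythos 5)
Your proof is correct and coincides with what the paper leaves implicit (the paper offers no written proof of this corollary; it only asserts that \Cref{thm:pullback} implies it). The essential ingredients you use — two applications of \Cref{thm:pullback}, the chain-morphism identity $\partial f_\star = f_\star \partial$ from \Cref{thm:pushforward:intro} to obtain $\delta(f^\star\Omega) = f^\star(\delta\Omega)$, the observation that $S \in \mathbf{I}_{k+1,c}(\widehat{\rho}_0)$ forces $\partial S \in \mathbf{I}_{k,c}(\widehat{\rho}_0)$ (because $\partial \partial S = 0$ so the second defining integral at level $k$ vanishes), the tautological $\mathbb{R}$-linearity, and the triviality of $\mathbf{I}_{\dim(M)+1,c}(M)$ for the closedness statement — are exactly the intended ones.

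One caveat, inherited from the paper's own statement rather than a defect you introduce: your case split covers $k+1 \leq \lfloor p\rfloor$ (second invocation of \Cref{thm:pullback}) and $k = \dim(M)$ (trivial $(k+1)$-currents), but in the regime $k = \lfloor p\rfloor < \dim(M)$ neither applies. There the second invocation of \Cref{thm:pullback} would occur at level $k+1 > \lfloor p\rfloor$, outside its stated range, and indeed the chain morphism $f_\star$ of \Cref{thm:pushforward:intro} is not constructed at that level, so $f^\star(\delta\Omega)(S) = \Omega(\partial f_\star S)$ is not literally available. The paper's principal applications have $p = \dim(M)$ (or more generally $\lfloor p\rfloor \geq \dim(M)$), in which case that intermediate regime does not arise; it would be worth flagging this explicitly or restricting to $k < \lfloor p\rfloor$ or $k = \dim(M)$, but the imprecision is shared with the source.
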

Given any $k$ as in \Cref{thm:pullback:Sobolev}, any element $\Omega$ in the codomain of $f^{\star}$ admits a subadditive extension to
\begin{equation}\label{eq:sobolevcochain}
    \mathcal{W}_{p/k, p/(k+1), loc}( \mathbf{I}_{k,c}( M ), \mathbf{I}_{k+1,c}( M ) ),
\end{equation}
simply by extending $\Omega$ and its coboundary as infinity to the set where they are not yet defined. The spaces in \eqref{eq:sobolevcochain} for $k \in \left\{0,1\dots,\lfloor p \rfloor \right\}$ are the collection of (local) Sobolev cochains in the sense of \cite[Section 3.2]{Raj:Wen:13}. When $p = \mathrm{dim}(M)$, the direct sum of the spaces over $k \in \left\{0, 1, \dots, \mathrm{dim}(M) \right\}$, endowed with the coboundary operation, provides a reduced cochain complex that is a \emph{quasiconformal invariant} of $M$. This is readily seen from \Cref{thm:pushforward:intro} and \Cref{thm:pullback}.

When $M = \mathbb{R}^n$, $p \in [n,\infty)$, $k \in \left\{1, \dots, n\right\}$, and $\Omega$ is an additive element from the space in \eqref{eq:sobolevcochain}, the action of $\Omega$ on polyhedral chains is uniquely determined by a Sobolev differential form $\tau \in \mathcal{W}_{d, loc }^{ p/k, p/(k+1) }( \Lambda^{k} \mathbb{R}^n )$; this follows from the Wolfe-type representation result \cite[Theorem 1.1]{Cam:Raj:Wen:15}. This provides a concrete representation of the pullback in this special case. In fact, if $f$ is Lipschitz, the form can be chosen to be a flat Whitney form, see \cite[Corollary 1.2]{Cam:Raj:Wen:15}.

\subsection{Closed cochains and an isoperimetric inequality for Sobolev maps}
\Cref{thm:pushforward:intro} immediately implies the following isoperimetric inequality for Sobolev mappings.
\begin{corollary}\label{cor:Sob:isoperimetry}
Let $n \geq 2$. Suppose that $M$ is an oriented $n$-dimensional Riemannian manifold, with or without boundary, $f \in W^{1,n}_{loc}( M, N )$, and $\Omega \in \mathcal{L}_\infty( \mathbf{I}_{n}(N) )$ is a closed and additive cochain with a bounded upper norm $\rho(x) = C$, $x \in N$, for some $C > 0$.

If $N$ supports isoperimetric inequalities of Euclidean type with dimension $n$ and constant $A'$, and dimension $n-1$ and constant $A$ in the sense of \eqref{eq:smallmass}, respectively, then
\begin{align}\label{eq:closedcochain:inequality}
    | \Omega( f_\star[\widetilde{M}] ) |
    &\leq
    C
    \inf_{ S \in \mathbf{I}_{n+1}(N) }
    M( f_\star[\widetilde{M}] + \partial S )
    \leq
    A C
    \left( \int_{ \partial \widetilde{M} } |df|^{n-1} \,d\mathcal{H}^{n-1} \right)^{ \frac{n}{n-1} }
\end{align}
whenever $[ \widetilde{M} ] \in \mathbf{I}_{n,c}( \widehat{\rho}_0 )$.
\end{corollary}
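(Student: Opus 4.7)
The plan is to split \eqref{eq:closedcochain:inequality} into two independent estimates: a first inequality reflecting the closed-additive structure of $\Omega$, and a second that bundles two applications of the Euclidean-type isoperimetric inequalities satisfied by $N$.

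For the first inequality, set $T_0 \coloneqq [\widetilde M] \in \mathbf{I}_{n,c}(\widehat\rho_0)$ and let $S \in \mathbf{I}_{n+1}(N)$ be arbitrary. \Cref{thm:pushforward:intro} yields $f_\star T_0 \in \mathbf{I}_n(N)$ with $M(f_\star T_0) \leq \int |df|^n\,d\|T_0\| < \infty$, so $|\Omega(f_\star T_0)| \leq C\, M(f_\star T_0) < \infty$ by the upper-norm assumption. Since $\Omega$ is closed, $\Omega(\partial S) = 0$; since $\Omega$ is additive and both terms are finite, $\Omega(f_\star T_0) = \Omega(f_\star T_0 + \partial S)$. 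The upper-norm bound then gives $|\Omega(f_\star T_0)| \leq C\, M(f_\star T_0 + \partial S)$, and taking the infimum over $S$ yields the first inequality.

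For the second inequality, consider the cycle $B \coloneqq \partial f_\star T_0 = f_\star \partial T_0 \in \mathbf{I}_{n-1}(N)$, well-defined because $f_\star$ is a chain morphism on $\mathbf{I}_{\star,c}(\widehat\rho_0)$. Applying the mass bound \eqref{eq:mass-energy:intro} to $\partial T_0$ with $\widehat\rho = |df|$ gives
\[
  M(B) \leq \int_M |df|^{n-1}\, d\|\partial T_0\| = \int_{\partial \widetilde M} |df|^{n-1}\, d\mathcal{H}^{n-1}.
\]
The $(n-1)$-dimensional isoperimetric inequality of Euclidean type with constant $A$ supplies $S_1 \in \mathbf{I}_n(N)$ with $\partial S_1 = B$ and $M(S_1) \leq A\, M(B)^{n/(n-1)}$. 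Then $T_1 \coloneqq f_\star T_0 - S_1$ is a cycle in $\mathbf{I}_n(N)$, and the $n$-dimensional isoperimetric inequality with constant $A'$ supplies an $S_2 \in \mathbf{I}_{n+1}(N)$ with $\partial S_2 = T_1$. Choosing $S = -S_2$ gives $f_\star T_0 + \partial S = S_1$, whence
\[
  \inf_{S \in \mathbf{I}_{n+1}(N)} M(f_\star T_0 + \partial S) \leq M(S_1) \leq A\, M(B)^{n/(n-1)} \leq A \left( \int_{\partial \widetilde M} |df|^{n-1}\, d\mathcal{H}^{n-1} \right)^{n/(n-1)}.
\]
Multiplying by $C$ and chaining with the first inequality yields \eqref{eq:closedcochain:inequality}.

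The main subtlety I anticipate lies in verifying that the two isoperimetric inequalities are applicable to the specific cycles $B$ and $T_1$: depending on how \eqref{eq:smallmass} is formulated, the estimate may only be available for cycles of sufficiently small mass. If so, the conclusion is to be interpreted as vacuous (right-hand side understood as $+\infty$) whenever the relevant smallness fails, and the substantive content is precisely the regime in which both fillings exist. Notably, the constant $A'$ does not appear in the final bound: only the existence of $S_2$, not its mass, is used.
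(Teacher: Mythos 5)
Your proof is correct and follows essentially the same route as the paper, which factors the argument through \Cref{lemm:homology:to:filling} (the identity $\inf_{S}M(P+\partial S)=\mathrm{Fillvol}(\partial P)$ plus the bound $\mathrm{Fillvol}(\partial P)\le A\,M(\partial P)^{n/(n-1)}$) and the more general \Cref{cor:localisoperimetry}; your direct construction of the competitor $S_1$ and the closedness/additivity argument for the first inequality are the same ideas. The caution you raise about small-mass restrictions does not in fact arise here: by the paper's convention, stating \eqref{eq:smallmass} ``with dimension $j$ and constant $A$'' means the mass bound is $M=\infty$, so both isoperimetric inequalities apply unconditionally. The finite-mass-bound regime you anticipate is precisely what \Cref{cor:localisoperimetry} handles, where the extra hypothesis $\int_M |df|^n\,d\mathcal{H}^n \le \min\{A M^{n/(n-1)}, M'/2\}$ combined with \eqref{eq:mass-energy} keeps $f_\star[\widetilde M]$ within the range of validity of both inequalities, rather than rendering the conclusion vacuous.
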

We consider top-dimensional compact Lipschitz submanifolds $\widetilde{M}$ of $M$ as integral currents~$[\widetilde{M}]$, with orientation induced from $M$.

When deriving \Cref{cor:Sob:isoperimetry}, the $n$-dimensional isoperimetric inequality is only applied to deduce that every integral $n$-cycle is the boundary of some $\mathbf{I}_{n+1}(N)$, which, together with the closedness of $\Omega$, allows us to bound the left-hand side of \eqref{eq:closedcochain:inequality} by the minimal mass in the homology class of $f_\star [\widetilde{M}]$. The $(n-1)$-dimensional inequality gives the quantitative upper bound in \eqref{eq:closedcochain:inequality}.

\Cref{cor:Sob:isoperimetry} can be considered a generalization of the classical isoperimetric inequality for Sobolev mappings proved by Reshetnyak, cf. \cite{Res:66:isoperi} or \cite[Lemma 1.2, p. 80]{Res:89}. Indeed, \cite[Lemma 1.2]{Res:89} follows from \eqref{eq:closedcochain:inequality} and Almgren's sharp isoperimetric inequality \cite[Theorem 10]{Alm:86} when $N = \mathbb{R}^n$ and $\Omega$ is the cochain induced by the Riemannian volume form of $N$. \Cref{cor:Sob:isoperimetry} plays a key role in our main application, \Cref{thm:yes} below.

The isoperimetric inequality for integral currents on Euclidean spaces was initially proved by Federer and Fleming \cite{Fed:Fle:60}, the sharp isoperimetry constant obtained by Almgren \cite{Alm:86}. These results were later extended to the Banach and metric settings by Gromov and Wenger~\cite{Gro:83,Wen:07}.

\subsection{Quasiregular curves}
We consider a complete Riemannian manifold $N$ and a closed and bounded differential form $\omega \in \Omega^{n}( N )$ for some $n \leq \mathrm{dim}( N )$.

Given an oriented Riemannian $n$-manifold $M$, we say that $f \colon M \to N$ is a \emph{$K$-quasiregular $\omega$-curve} if $f \in W^{1,n}_{loc}( M, N )$ and
\begin{equation}\label{eq:distortioninequality}
    ( \|\omega\| \circ f ) |df|^n
    \leq
    K\,
    {\star}f^{\star}\omega
    \quad\text{$\mathcal{H}^n$-almost everywhere,}
\end{equation}
where
\begin{equation}\label{eq:comass}
    \|\omega\|(x)
    =
    \sup\left\{
        \omega( v )
        \colon
        \text{$v$ is a unit simple $n$-vector on $T_xN$}
    \right\}
\end{equation}
is the \emph{comass} of $\omega$, and ${\star}f^\star \omega$ is the Hodge star of the pullback $f^{\star} \omega$, defined pointwise using the weak differential of $f$.

In contrast to the definition of quasiregular curves in \cite{Pan:20}, we do not assume \emph{a priori} that $f$ is \emph{continuous} or that $\omega$ is \emph{strongly nonvanishing} (i.e. $\|\omega\|(x) \neq 0$ for every $x \in N$). In fact, in our applications, we only assume that $\| \omega \|(x)$ is bounded from above by some $C > 0$ everywhere and from below by $c > 0$ in the image of $f$. These assumptions are motivated by the calibrated geometries of Harvey and Lawson \cite{Har:Law:82}.

There are various contexts at which quasiregular curves naturally occur. Standard applications include pseudoholomorphic curves on Kähler manifolds, cf. \cite{Grom:85,Don:96,Gro:07}. Further examples can be obtained from the classical theory of quasiregular mappings \cite{Res:89,Rick:93,Iw:Ma:01,Ast:Iwa:Mar:09}. See \cite{Smi:11,Che:Kari:Mad:20,Hei:21,He:Pa:Pry:23} for related work.

Reshetnyak proved that every $K$-quasiregular mapping from $U \subset \mathbb{R}^n$ into $\mathbb{R}^n$ is locally $(1/K)$-Hölder continuous, cf. \cite{Res:66,Res:89}. This was later extended to $K$-quasiregular $\omega$-curves for nonzero constant-coefficient $n$-forms $\omega$ by Onninen and Pankka in \cite[Theorem 1.1]{Onn:Pan:21} with a smaller Hölder exponent. We prove the following sharp version of \cite[Theorem 1.1]{Onn:Pan:21}, answering positively to a question posed by Onninen and Pankka.
\begin{theorem}\label{thm:yes}
Every $K$-quasiregular $\omega$-curve $f \colon U \to \mathbb{R}^m$ for open $U \subset \mathbb{R}^n$ and nonzero constant-coefficient $n$-form $\omega$ has a locally $(1/K)$-Hölder continuous representative.
\end{theorem}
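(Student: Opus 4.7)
\medskip

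\noindent \textbf{Proof plan for \Cref{thm:yes}.}

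Since $\omega$ is a nonzero constant-coefficient $n$-form on $\mathbb{R}^m$, one may normalize so that $\|\omega\|\equiv 1$ on $\mathbb{R}^m$; the Hölder exponent is scale-invariant and the distortion inequality \eqref{eq:distortioninequality} is preserved. The constant coefficients also make $\omega$ closed and its cochain action bounded, so setting $\Omega(T) \coloneqq T(\omega)$ on $\mathbf{I}_n(\mathbb{R}^m)$ produces a closed, additive element of $\mathcal{L}_\infty(\mathbf{I}_n(\mathbb{R}^m))$ with constant upper norm $1$. Since $\mathbb{R}^m$ supports sharp Euclidean isoperimetric inequalities in all relevant dimensions (by Almgren's theorem), the hypotheses of \Cref{cor:Sob:isoperimetry} are available.

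The main estimate is an energy decay. Fix $x\in U$ and $R_0$ with $\overline{B(x,R_0)}\subset U$. For almost every $r \in (0, R_0)$ the ball $[B(x,r)]$ lies in $\mathbf{I}_{n,c}(\widehat{\rho}_0)$ and its boundary current is $[\partial B(x,r)]$. Applying \Cref{cor:Sob:isoperimetry} with the sharp Almgren constant $A = (n^n\omega_n)^{-1/(n-1)}$ gives
\begin{equation*}
    \bigl|\Omega(f_\star[B(x,r)])\bigr|
    \leq A\left(\int_{\partial B(x,r)} |df|^{n-1}\,d\mathcal{H}^{n-1}\right)^{n/(n-1)}.
\end{equation*}
On the other hand, $\Omega(f_\star[B(x,r)]) = \int_{B(x,r)} {\star} f^\star\omega\,dx$ (via the Lipschitz decomposition in \eqref{eq:pointwiseconvegence} and standard compatibility of the pushforward with integration of forms on each piece), and the distortion inequality $|df|^n \leq K\,{\star}f^\star\omega$ yields
\begin{equation*}
    E(r) \coloneqq \int_{B(x,r)} |df|^n\,dx
    \leq
    K\,\Omega(f_\star[B(x,r)])
    \leq KA\left(\int_{\partial B(x,r)} |df|^{n-1}\right)^{n/(n-1)}.
\end{equation*}
Hölder's inequality on the sphere, together with the coarea-type identity $E'(r) = \int_{\partial B(x,r)} |df|^n\,d\mathcal{H}^{n-1}$ valid for a.e.\ $r$, converts the right-hand side into $KA(n\omega_n)^{1/(n-1)}\, r\, E'(r)$. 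The sharp Almgren value of $A$ makes the prefactor collapse to $K/n$, so
\begin{equation*}
    E(r) \leq \frac{K}{n}\, r\, E'(r).
\end{equation*}

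Integrating this differential inequality from $r$ to $R_0$ gives the sharp Morrey-type decay $E(r) \leq E(R_0) (r/R_0)^{n/K}$ for every $r \in (0,R_0]$. Applying Campanato's characterization to each scalar component $f_j \in W^{1,n}_{loc}$ then yields a locally $(1/K)$-Hölder continuous representative, as desired. The main obstacles are: (i) verifying that the pairing $\Omega(\widehat{f}_\star[B(x,r)])$ really equals $\int_{B(x,r)} \star f^\star \omega$, which requires chasing the Lipschitz-piece decomposition in \eqref{eq:pointwiseconvegence} and invoking the standard identity $(f_i)_\sharp[B(x,r)\cap E_i](\omega) = \int_{B(x,r)\cap E_i} f_i^\star\omega$ together with the fact that the weak differential of $f$ agrees with that of $f_i$ almost everywhere on $E_i$; (ii) ensuring that the sharp Almgren constant is attained in the isoperimetric step, so that the prefactor in the ODE is exactly $K/n$ rather than a loosely-bounded quantity; and (iii) handling the fact that \Cref{cor:Sob:isoperimetry} applies only for those $r$ outside the $n$-exceptional set, which is a null set in $(0,R_0)$ and therefore harmless for the ODE argument.
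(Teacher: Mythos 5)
Your proposal is correct and follows essentially the same route as the paper. The paper proves this via its Theorem~\ref{thm:mainresult:1}, which carries out exactly the steps you outline: normalize the comass, form the closed cochain $\Omega = \Omega_\omega$ (so that $Q = KC/c = K$), apply the isoperimetric bound from Corollary~\ref{cor:Sob:isoperimetry} with Almgren's sharp constant, pass through Hölder's inequality and the coarea identity $H'(r) = \int_{\partial B(x,r)}|df|^n$, arrive at $H(r) \leq \tfrac{K}{n}\, r\, H'(r)$, and close with Grönwall and Morrey's lemma; the identification $\Omega(f_\star[B(x,r)]) = \int_{B(x,r)} {\star}f^\star\omega\,d\mathcal{H}^n$ you flag as step (i) is precisely the content of the paper's Lemma~\ref{lemm:consistency:Pankka}.
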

We outline the proof of \Cref{thm:yes}. First, the constant-coefficient form $\omega$ in \Cref{thm:yes} induces a closed and additive cochain $\Omega \in \mathcal{W}_{\infty, \infty}( \mathbf{I}_n( \mathbb{R}^m ), \mathbf{I}_{n+1}( \mathbb{R}^m ) )$. In fact, the definitions of mass due to Ambrosio and Kirchheim and of upper norms for cochains allow us to identify the comass $x \mapsto \rho(x) = \|\omega\|(x)$ from \eqref{eq:comass} as an upper norm of $\Omega$ after which we may apply \eqref{eq:closedcochain:inequality} and \eqref{eq:distortioninequality}; the results are applicable because the pullback $f^{\star}\Omega$ coincides with the cochain induced by the pullback $f^{\star}\omega$, cf. \Cref{lemm:consistency:Pankka}. These observations and the sharp isoperimetric inequality \cite[Theorem 10]{Alm:86} reduce \Cref{thm:yes} to a standard argument due to Morrey. We also prove the following.
\begin{corollary}\label{cor:regularity:lip+harm}
Every $1$-quasiregular $\omega$-curve $f \colon U \to \mathbb{R}^m$ for open $U \subset \mathbb{R}^n$ and nonzero constant-coefficient n-form $\omega$ has a representative that is locally Lipschitz and $n$-harmonic.
\end{corollary}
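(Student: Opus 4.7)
The plan is to combine the sharp regularity from \Cref{thm:yes} with a calibration-style energy comparison argument, in the spirit of the Harvey--Lawson calibrated geometry.

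\emph{Local Lipschitz continuity.} Applying \Cref{thm:yes} with $K = 1$ produces a representative of $f$ that is locally $1$-H\"older continuous, i.e.\ locally Lipschitz. I would fix this representative henceforth.

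\emph{$n$-harmonicity.} Since $\omega$ has constant coefficients, its comass $\|\omega\|$ is a constant $c > 0$, and the $1$-quasiregularity inequality \eqref{eq:distortioninequality} becomes the pointwise identity $c\,|df|^n = \star f^\star \omega$ almost everywhere. The plan is to show that $f$ minimizes the $n$-Dirichlet energy $g \mapsto \int |dg|^n$ against its own boundary values; standard variational theory then yields the $n$-Laplace equation, i.e.\ $n$-harmonicity. Fix a smooth bounded $V \subset\subset U$ and a competitor $g \in W^{1,n}(U, \R^m)$ with $g - f$ compactly supported in $V$. The closed constant-coefficient form $\omega$ induces a closed and additive cochain $\Omega \in \mathcal{W}_{\infty,\infty}(\mathbf{I}_n(\R^m), \mathbf{I}_{n+1}(\R^m))$ with upper norm $\rho \equiv c$, exactly as in the outline of \Cref{thm:yes}. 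Since $f$ and $g$ agree on $\partial V$, the difference $f_\star [V] - g_\star [V]$ is an integral $n$-cycle in $\R^m$; the Euclidean isoperimetric inequality invoked in \Cref{cor:Sob:isoperimetry} ensures this cycle bounds some integral $(n+1)$-chain, and closedness of $\Omega$ gives $\Omega(f_\star [V]) = \Omega(g_\star [V])$. Invoking the consistency lemma \Cref{lemm:consistency:Pankka} referenced in the outline of \Cref{thm:yes}, which identifies the abstract cochain evaluation on $h_\star[V]$ with the Sobolev-form integral $\int_V h^\star \omega$, this equality becomes $\int_V f^\star \omega = \int_V g^\star \omega$. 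The pointwise comass bound $|{\star} g^\star \omega| \leq c\,|dg|^n$ and the quasiregularity identity for $f$ then give
\[
    c \int_V |df|^n \, d\mathcal{H}^n
    = \int_V {\star} f^\star \omega \, d\mathcal{H}^n
    = \int_V f^\star \omega
    = \int_V g^\star \omega
    \leq c \int_V |dg|^n \, d\mathcal{H}^n,
\]
whence $\int_V |df|^n \leq \int_V |dg|^n$ for every admissible $g$, proving the minimization property and hence $n$-harmonicity.

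The main technical hurdle is the identification $\Omega(h_\star [V]) = \int_V h^\star \omega$ for the Sobolev maps $h \in \{f, g\}$, which rests on \Cref{thm:pushforward:intro} applied to both maps together with the consistency lemma referenced in the outline of \Cref{thm:yes}; once this is in hand the remaining calibration computation is formal. A minor bookkeeping point is that the Lipschitz representative is only needed for the continuity conclusion of the statement, so the energy-minimization argument may be carried out with $f$ interpreted in the Sobolev sense throughout.
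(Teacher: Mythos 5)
The proposal is correct and follows essentially the same route as the paper: Lipschitz continuity via the sharp H\"older estimate (you cite \Cref{thm:yes}, the paper cites \Cref{thm:mainresult:1}, and the former is an immediate consequence of the latter), and $n$-harmonicity via a calibration-style energy comparison. The difference is in how the energy comparison is packaged. The paper proves a general quasiminimality-in-energy statement (\Cref{thm:mainresult:2}) that works for abstract cochains, arbitrary $K$, and metric targets with mass bounds, by chaining $c\,M(f_\star[\widetilde M]) \le K\,\Omega(f_\star[\widetilde M]) \le KC\cdot\mathrm{Fillvol}(f_\star[\partial\widetilde M]) \le KC \int|dg|^n$; you instead exploit the differential-form representation $\Omega(h_\star[V]) = \int_V h^\star\omega$ from \Cref{lemm:consistency:Pankka} to get the cleaner equality $\int_V f^\star\omega = \int_V g^\star\omega$ from closedness, and then the pointwise comass bound closes the argument. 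Your version is more transparent for the constant-coefficient case but less general; both ultimately rest on the same machinery (\Cref{thm:pushforward:intro}, the consistency lemma, and Euclidean isoperimetry). The one point you flag but do not fully carry out, namely that $f_\star[\partial V] = g_\star[\partial V]$, is precisely where care is needed: you must choose a single exceptional function $\rho_0$ that controls both $f$ and $g$ simultaneously with $\{f\neq g\}\subset\{\rho_0=\infty\}$, so that the Lusin--Lipschitz representations match on $\partial V$, and then run an exhaustion argument to reach all relevant $V$; the paper's proof of \Cref{thm:mainresult:2} does exactly this. That bookkeeping is routine but not automatic.
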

A simple argument, using the closedness of the cochain induced by $\omega$, shows that if $f$ is as in \Cref{cor:regularity:lip+harm}, the pushforward integral currents $f_{\star}[ \widetilde{M} ]$, for compact and oriented Lipschitz submanifolds $\widetilde{M}$, are integral currents that are calibrated in the sense of Harvey and Lawson, cf. \cite[Section 4]{Har:Law:82}. In particular, such pushforward integral currents are mass-minimizing in their homology class. A similar idea is used to prove the $n$-harmonicity of the continuous representative of $f$.

\subsection{Further observations}
The proof of \Cref{thm:yes}, based on \Cref{thm:pushforward:intro} and Corollaries \ref{thm:pullback} and \ref{cor:Sob:isoperimetry}, allows us to generalize the modulus of continuity estimate to various settings. Indeed, the constant-coefficient $n$-forms in \Cref{thm:yes} can be replaced by closed flat Whitney forms satisfying $\|\omega\|(x) \geq c$ pointwise on the image of $f$, in which case $K$ in the modulus of continuity estimate ought to be replaced by $Q = K\|\omega\|_{\infty}/c$, where $\|\omega\|_\infty$ is the $\mathcal{H}^n$-essential supremum of the comass of $\omega$. We later formulate a similar extension to the class of closed and bounded additive cochains. 

A quantitative local Hölder continuity as in \Cref{thm:yes} can be obtained in various contexts, e.g. on any compact Lipschitz manifold $N$, or more generally, complete metric targets supporting isoperimetric inequalities of Euclidean type for the dimensions $n-1$ and $n$ up to some mass scale; see \Cref{sec:singularsetting} for the precise definition. Targets satisfying these assumptions include complete Riemannian manifolds with an upper bound on sectional curvature. Subtler examples are obtained by considering CAT($\kappa$)-spaces (see e.g. \cite{Wen:07}) or Heisenberg group of high enough dimension, endowed with any standard left-invariant subRiemannian distance, e.g. the Korányi distance, cf.~\cite{Bas:Wen:You:21}.

As far as the author is aware, the (local) Hölder continuity is new already for quasiregular mappings taking values in complete Riemannian manifolds with a sectional curvature upper bound, see \cite{Gol:Haj:Pak:19} and the recent monograph \cite{Kan:21} for further discussion.

We elaborate on these topics in \Cref{sec:singularsetting}.

\subsection{Organization of the paper}
In \Cref{sec:preliminaries}, we introduce our notations and basic definitions and recall some facts regarding injective Banach spaces. \Cref{sec:integral} consists of preliminary work needed for the proof of \Cref{thm:pushforward:intro}, while \Cref{thm:pushforward:intro} and \Cref{thm:pullback} are proved in \Cref{sec:mainresults:proof}.

We recall a representation theorem for integer-rectifiable currents in \Cref{sec:earlier} which illustrates the connection of comass of differential forms to the upper norm of the induced additive cochains. \Cref{cor:Sob:isoperimetry}, \Cref{thm:yes} and \Cref{cor:regularity:lip+harm} are proved in \Cref{sec:singularsetting}. That section also contains a generalization of quasiregular curves based on closed additive cochains.

\section{Preliminaries}\label{sec:preliminaries}
We introduce the notation and basic definitions we need during the manuscript.

\subsection{Lipschitz mappings}
Given metric spaces $N$ and $Y$, $\LIP( N, Y )$ refers to the Lipschitz mappings $h \colon N \rightarrow Y$. We say that $h$ is Lipschitz whenever
\begin{equation*}
    \LIP( h )
    \coloneqq
    \inf\left\{ 
        L > 0
        \colon
        d(h(x),h(y)) \leq L d(x,y) \,
        \text{for every $(x,y) \in N \times N$}
    \right\}
    <
    \infty.
\end{equation*}
When $\LIP( h ) \leq L$, we say that $h$ is $L$-Lipschitz.

The class $\LIP_{b}( N, Y )$ refers to the collection of all $h \in \LIP( N, Y )$ whose image is a bounded subset of the image. When $Y$ is a Banach space and $f \in \LIP( N, Y )$, the set $\overline{ \left\{ f \neq 0 \right\} }$ is the \emph{support} of $f$. The class $\LIP_{bs}( N, Y )$ consists of all Lipschitz mappings with \emph{bounded support}, i.e. mappings whose support has finite diameter. In particular, $\LIP_{bs}( N, Y ) \subset \LIP_{b}( N, Y )$. When $Y = \mathbb{R}$ we omit $Y$ from the notations above.

Given any metric spaces $N$ and $Y$, $f \in \LIP(N, Y)$ and $x \in N$, we denote
\begin{equation*}
    \lip(f)(x)
    \coloneqq
    \limsup_{ x \neq y \rightarrow x}
    \frac{ d(f(x),f(y)) }{ d(x,y) },
\end{equation*}
and call $\lip(f)(x)$ the \emph{pointwise Lipschitz constant} of $f$ at $x$. The corresponding function is called the \emph{pointwise Lipschitz function} of $f$.

\subsection{Injective Banach spaces}
Given a nonempty index set $N$, we consider the space
\begin{equation*}
    L^{\infty}( N )
    \coloneqq
    \left\{
        a \colon N \to \mathbb{R}
        \colon
        \sup_{ x \in N } |a(x)| < \infty
    \right\}
\end{equation*}
endowed with componentwise addition and summation together with the supremum norm $\| a \| = \sup_{ x \in N } |a(x)|$. Then $L^{\infty}(N)$ becomes a Banach space. We typically denote $( a_x )_{ x \in N }$ instead of $a \colon N \to \mathbb{R}$.

When $N$ is a metric space, we use the \emph{Kuratowski embedding} $\iota \colon N \to L^{\infty}(N)$ to isometrically embed a metric space $N$ into $L^{\infty}(N)$: fix $x_0 \in N$, consider $f_{y}(x) \coloneqq d( x, y ) - d( y, x_0 )$, and set $\iota(x) = ( f_{y}(x) )_{ y \in N }$. We often identify $N$ with its image $\iota(N)$. Observe that if $N' \subset N$ is a dense subset, we may define an isometric embedding of $N$ into $L^{\infty}(N')$ using the formula $\iota'(x) = ( f_y(x) )_{ y \in N' }$ as above. In particular, when $N$ is separable, this idea yields an isometric embedding into $\ell^{\infty} \coloneqq L^{\infty}( \mathbb{N} )$.

A Banach space $\mathbb{V}$ is \emph{injective} if whenever $\mathbb{W}$ and $\mathbb{H}$ are Banach spaces, $\iota \colon \mathbb{W} \to \mathbb{H}$ is a linear isometric embedding, and there is a bounded linear map $L \colon \mathbb{W} \to \mathbb{V}$, then there is a linear map $\widetilde{L} \colon \mathbb{H} \to \mathbb{V}$ with $\widetilde{L} \circ \iota = L$, so that the operator norms of $L$ and $\widetilde{L}$ coincide.
\begin{lemma}\label{lem:inj:infty}
Given any nonempty index set $N$, the space $L^{\infty}(N)$ is injective. Furthermore, if $\mathbb{V}$ is injective and $N$ is the closed unit ball of $\mathbb{V}^{*}$, there is a linear isometric embedding $\iota \colon \mathbb{V} \to L^{\infty}( N )$ and a linear $P \colon L^{\infty}( N ) \to \mathbb{V}$ so that $P$ has unit operator norm and $P \circ \iota$ is the identity.
\end{lemma}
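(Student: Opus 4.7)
The plan is to prove the two claims separately, both via a pointwise/coordinate-by-coordinate application of Hahn--Banach for the first part and a direct use of the injectivity hypothesis for the second.

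For the injectivity of $L^{\infty}(N)$, suppose $\iota \colon \mathbb{W} \to \mathbb{H}$ is a linear isometric embedding and $L \colon \mathbb{W} \to L^{\infty}(N)$ is bounded linear. First I would unpack $L$ coordinatewise: for each $n \in N$, the assignment $L_n(w) \coloneqq L(w)_n$ is a real-linear functional on $\mathbb{W}$ of norm at most $\|L\|$. Since $\iota$ is an isometric embedding, the functional $L_n \circ \iota^{-1}$, defined on the closed subspace $\iota(\mathbb{W}) \subset \mathbb{H}$, has the same norm as $L_n$; Hahn--Banach produces an extension $\widetilde{L}_n \colon \mathbb{H} \to \mathbb{R}$ with $\|\widetilde{L}_n\| = \|L_n\| \leq \|L\|$. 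Then define $\widetilde{L}(h) \coloneqq ( \widetilde{L}_n(h) )_{n \in N}$. The uniform bound $|\widetilde{L}_n(h)| \leq \|L\|\,\|h\|$ for each $n$ shows $\widetilde{L}(h) \in L^{\infty}(N)$ and $\|\widetilde{L}\| \leq \|L\|$; the reverse inequality is automatic because $\widetilde{L} \circ \iota = L$ by construction, so the operator norms coincide.

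For the second part, let $\mathbb{V}$ be injective and set $N = \overline{B}_{\mathbb{V}^{*}}$. I would use the canonical Kuratowski-type embedding $\iota \colon \mathbb{V} \to L^{\infty}(N)$ given by $\iota(v) = ( \phi(v) )_{ \phi \in N }$. This is manifestly linear, and Hahn--Banach (the supporting functional statement) yields
\begin{equation*}
    \| \iota(v) \|
    =
    \sup_{ \phi \in \overline{B}_{\mathbb{V}^{*}} } | \phi(v) |
    =
    \|v\|,
\end{equation*}
so $\iota$ is an isometric linear embedding. Now apply the hypothesis that $\mathbb{V}$ is injective to the triple $( \mathbb{W}, \mathbb{H}, L ) = ( \mathbb{V}, L^{\infty}(N), \mathrm{id}_{\mathbb{V}} )$ equipped with the embedding $\iota$: this produces a linear map $P \colon L^{\infty}(N) \to \mathbb{V}$ with $P \circ \iota = \mathrm{id}_{\mathbb{V}}$ and operator norm equal to $\|\mathrm{id}_{\mathbb{V}}\| = 1$.

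There is no real obstacle here; the proof is essentially bookkeeping. The only point requiring care is the verification that the coordinatewise Hahn--Banach extensions in the first part genuinely assemble into a bounded map into $L^{\infty}(N)$ (rather than merely into the product $\mathbb{R}^N$), which is guaranteed by the uniform bound $\|\widetilde{L}_n\| \leq \|L\|$. Uniqueness of the extension is not required and typically fails, since Hahn--Banach extensions are far from unique.
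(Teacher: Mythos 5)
Your proof is correct. The second part matches the paper's argument essentially verbatim: both use the canonical embedding $\iota(v) = (\phi(v))_{\phi \in N}$ into $L^{\infty}(N)$ for $N$ the closed dual unit ball, and both obtain $P$ by feeding the identity (equivalently, $\iota^{-1}$ on $\iota(\mathbb{V})$) into the defining property of injectivity for $\mathbb{V}$.

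For the first part you take a genuinely different route. The paper endows $N$ with the discrete topology, identifies $L^{\infty}(N)$ linearly isometrically with $C(\beta N)$ for the Stone--\v{C}ech compactification $\beta N$, and then cites a theorem from Lacey's book for the injectivity of such $C(K)$ spaces. You instead give a direct, self-contained argument: decompose $L$ into its coordinate functionals $L_n$, extend each one by Hahn--Banach with preserved norm, and observe that the uniform bound $\|\widetilde{L}_n\| \leq \|L\|$ guarantees the assembled map lands in $L^{\infty}(N)$ with $\|\widetilde{L}\| \leq \|L\|$; the reverse norm inequality is then automatic from $\widetilde{L}\circ\iota = L$. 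Both proofs are Hahn--Banach at their core, but yours avoids the detour through $\beta N$ and the external reference, which makes it more elementary and transparent for the specific space $L^{\infty}(N)$, at the modest cost of not situating the result in the broader framework of injective $C(K)$ spaces that the Lacey citation provides.
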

\begin{proof}
Given an index set $N$, we endow it with the topology making every set open. Then $L^{\infty}(N)$ can be linearly isometrically isomorphically identified with the space of continuous functions $C( \beta N )$ for the Stone--\v{C}ech compactification $\beta N$ of $N$. Now Hahn--Banach theorem implies the claim for $C( \beta N )$, see \cite[Chapter 3, Section 11, Theorem 3, p. 87]{Lac:74}. Thus $L^{\infty}(N)$ is injective.

In case $\mathbb{V}$ is injective, we denote by $N$ the closed unit ball of the dual $\mathbb{V}^{*}$ and consider the linear isometric embedding $\iota(v) = ( z(v) )_{ z \in N }$. The existence of $P$ follows by extending $L = \iota^{-1}$ from $\mathbb{W} = \iota( \mathbb{V} )$ using the definition of injectivity of $\mathbb{V}$.
\end{proof}

On the one hand, \Cref{lem:inj:infty} shows that $L^{\infty}( N )$ for arbitrary index sets $N \neq \emptyset$ are injective. On the other hand, \Cref{lem:inj:infty} implies the following key lemma.
\begin{lemma}[McShane for $L$-Lipschitz mappings]\label{lemm:mcshane}
Let $\mathbb{V}$ be an injective Banach space, $K \subset M$ a set in a metric space $M$, and $L \geq 0$. Let $\LIP_{b,L}( K, \mathbb{V} ) \subset \LIP_{b}( K, \mathbb{V} )$ denote the collection of $L$-Lipschitz mappings. Then there is an extension operator
\begin{equation*}
    \mathcal{E}
    \colon
    \LIP_{b,L}( K, \mathbb{V} ) \to \LIP_{b}( M, \mathbb{V} )
\end{equation*}
satisfying the following:
\begin{itemize}
    \item for each $F \in \LIP_{b,L}( K, \mathbb{V} )$, the extension $\mathcal{E}( F )$ is $L$-Lipschitz;
    \item If $F, G \in \LIP_{b,L}( K, \mathbb{V} )$, then $$\sup_{ x \in K } | F(x) - G(x) | = \sup_{ x \in M } | \mathcal{E}( F )(x) - \mathcal{E}( G )(x) |.$$
\end{itemize}
\end{lemma}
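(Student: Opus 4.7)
The strategy is to reduce to the classical scalar McShane lemma componentwise, exploiting the injectivity of $\mathbb{V}$ in the form provided by \Cref{lem:inj:infty}. I would begin by fixing the isometric embedding $\iota \colon \mathbb{V} \to L^{\infty}(N)$ and the linear projection $P \colon L^{\infty}(N) \to \mathbb{V}$ of operator norm one with $P \circ \iota = \mathrm{id}$, where $N$ denotes the closed unit ball of $\mathbb{V}^{*}$.

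Given $F \in \LIP_{b,L}(K,\mathbb{V})$, I would consider for each $z \in N$ the coordinate $f_{z} \colon K \to \mathbb{R}$ defined by $f_{z}(x) = z(F(x))$. Since $\|z\| \leq 1$, the function $f_{z}$ is $L$-Lipschitz and satisfies $\sup_{K}|f_{z}| \leq \sup_{y \in K}\|F(y)\|$. Then I would apply the truncated McShane formula
\[
\widetilde{f}_{z}(x) = \min\!\left( \sup_{y \in K} f_{z}(y),\; \inf_{y \in K}\bigl(f_{z}(y) + L\,d(x,y)\bigr) \right), \qquad x \in M,
\]
which produces an $L$-Lipschitz extension of $f_{z}$ to $M$ that is sandwiched between $\inf_{K} f_{z}$ and $\sup_{K} f_{z}$. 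The truncation with $\sup_{K} f_{z}$ is essential, since without it $\widetilde{f}_{z}$ could grow like $d(\cdot,K)$ and break the bounded-image requirement. Assembling coordinate-wise yields a bounded $L$-Lipschitz map $\widetilde{F} \colon M \to L^{\infty}(N)$ with $\widetilde{F}|_{K} = \iota \circ F$, and I would set $\mathcal{E}(F) \coloneqq P \circ \widetilde{F}$.

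The extension property $\mathcal{E}(F)|_{K} = F$ is then immediate from $P \circ \iota = \mathrm{id}$, while the $L$-Lipschitz bound and the boundedness of the image of $\mathcal{E}(F)$ follow from $\|P\| = 1$ combined with the corresponding properties of $\widetilde{F}$. For the sup-distance equality, I would observe that, as a function of $f_{z}$ in uniform norm, each of $\sup_{y \in K}(\cdot)$ and $\inf_{y \in K}(\cdot + L\,d(x,y))$ is $1$-Lipschitz, and so is $(a,b) \mapsto \min(a,b)$ on $\mathbb{R}^{2}$ with the sup norm; composing these gives $|\widetilde{f}_{z}(x) - \widetilde{g}_{z}(x)| \leq \sup_{K}|f_{z}-g_{z}|$ for every $x \in M$ and every $z \in N$. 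Taking the supremum in $z$ and composing with the norm-one $P$ yields $\sup_{M}\|\mathcal{E}(F) - \mathcal{E}(G)\| \leq \sup_{K}\|F - G\|$, and the reverse inequality is immediate since $\mathcal{E}(F),\mathcal{E}(G)$ extend $F,G$. The main subtlety is arranging the extension to remain inside a bounded region of $\mathbb{V}$ while simultaneously preserving the exact sup-distance, which is precisely what the $\min$-with-supremum truncation buys us.
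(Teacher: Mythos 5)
Your proof is correct and follows essentially the same strategy as the paper: reduce to coordinate functions via the injectivity/duality structure of $\mathbb{V}$ (the paper reduces directly to $\mathbb{V} = L^{\infty}(N)$ by \Cref{lem:inj:infty}, you instead keep $\mathbb{V}$ and pass through the pair $(\iota, P)$, which amounts to the same thing), apply a scalar McShane formula coordinatewise, and deduce the sup-distance equality from the $1$-Lipschitzness of the McShane formula in the uniform norm plus the fact that the extension restricts to the original on $K$. Your componentwise and $1$-Lipschitz-composition bookkeeping is correct.

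The one genuine difference is your use of the truncated McShane formula
\[
\widetilde{f}_{z}(x) = \min\!\left( \sup_{y \in K} f_{z}(y),\; \inf_{y \in K}\bigl(f_{z}(y) + L\,d(x,y)\bigr) \right),
\]
whereas the paper's proof uses the plain infimum $\mathcal{E}(f_z)(x) = \inf_{y \in K}\{ f_z(y) + Ld(y,x) \}$ without truncation. You are right that the truncation matters: without it, the coordinate extensions grow like $L\,\dist(x,K)$, so the assembled map need not have bounded image when $M$ is unbounded, and hence need not land in $\LIP_{b}(M,\mathbb{V})$ as the statement asserts. (This is not merely cosmetic in the paper: in Case (2) of the proof of \Cref{thm:pushforward}, the lemma is applied on a non-compact Riemannian $M$ and the extensions are explicitly required to have the same supremum norm as the original restriction, which is exactly the property your truncation provides and the untruncated formula does not.) Your truncation also makes $\mathcal{E}(0) = 0$, so that the second bullet with $G = 0$ directly gives $\sup_M \|\mathcal{E}(F)\| = \sup_K \|F\|$. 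In short: your version is the more careful one, and it delivers all three desiderata — $L$-Lipschitz, bounded image, exact sup-distance preservation — with the same amount of machinery.
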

\begin{proof}
\Cref{lem:inj:infty} implies that it suffices to prove the claim when $\mathbb{V} = L^{\infty}(N)$, for arbitrary $N \neq \emptyset$. To verify the special case, consider an $L$-Lipschitz $F \in \LIP_{b}( K, \mathbb{V} )$ and identify $F$ with $( f_z )_{ z \in N }$ for $f_z \colon K \to \mathbb{R}$.

Every $f_z$ is $L$-Lipschitz and $|f_z(x)| \leq |F(x)|$ for all $(x,z) \in K \times N$, so we may consider the McShane extension
\begin{equation*}
    \mathcal{E}( f_z )(x)
    \coloneqq
    \inf_{ y \in K }\left\{ f_z(y) + Ld( y, x ) \right\}, \quad x \in M.
\end{equation*}
Then $\mathcal{E}( f_z )$ extends $f_z$, is $L$-Lipschitz, and if $g \in \LIP_{b,L}( K )$, we have
\begin{equation}\label{eq:monotonicity:extension}
    \sup_{ x \in M }
    | \mathcal{E}( f_z )(x) - \mathcal{E}( g )(x) |
    =
    \sup_{ y \in K }
    | f_z(y) - g(y) |.
\end{equation}
We define $\mathcal{E}( F )(x)
    \coloneqq
    \left( \mathcal{E}( f_z )(x) \right)_{ z \in N }$ for every $x \in M$,
observing that $\mathcal{E}( F )$ is an $L$-Lipschitz extension of $F$. Then \eqref{eq:monotonicity:extension} implies the claim.
\end{proof}

\subsection{Measure theory}
Given a metric space $M$ endowed with a(n outer) measure $\mu$, a set $E \subset M$ is \emph{$\mu$-negligible} if $\mu( E ) = 0$. We say that a property holds \emph{almost everywhere} if the property is true for every point $x \in M \setminus E$ for a $\mu$-negligible set $E$. The $\mu$-measurable sets are defined using the Carathéodory criterion.

In case $N$ is a metric space, we say that the image of $f \colon M \to N$ is \emph{essentially separable} if there exists a $\mu$-negligible set $E \subset M$ so that $f( M \setminus E )$ is separable. We say that $f \colon M \to N$ is \emph{$\mu$-measurable} if the image of $f$ is essentially separable and $f^{-1}( U )$ is $\mu$-measurable for every open $U \subset N$.

A mapping is \emph{Borel} if its image is essentially separable and the preimage of every open set is Borel. We say that $\mu$ is a \emph{Borel measure} if every Borel set is $\mu$-measurable and for every $\mu$-measurable set $E \subset M$, there is Borel $B \supset E$ with $\mu( B ) = \mu( E )$. The measures we consider are Borel measures.

We say that $\mu$ is \emph{locally finite} if $M$ can be covered by open sets at which $\mu$ is finite. We say that $\mu$ is \emph{finite} if $\mu( M ) < \infty$.

Given two outer measures, $\mu_1$ and $\mu_2$, we say that $\mu_1 \leq \mu_2$ \emph{in the sense of measures} if $\mu_1( E ) \leq \mu_2( E )$ for every $E \subset M$.

We often use the $n$-dimensional Hausdorff measure $\mathcal{H}^n$ for $n \in \left\{0,1,2,\dots\right\}$. The measure is normalized in such a way that $\mathcal{H}^n$-measure of the $n$-dimensional Euclidean unit ball is equal to the corresponding $n$-dimensional Lebesgue measure $\omega_n$.

Given a Borel map $f \colon M \to N$ and a finite Borel measure $\mu$, the \emph{pushforward} $f_\sharp \mu$ is the Borel measure defined by $f_\sharp\mu( E ) \coloneqq \mu( f^{-1}(E) )$ for $E \subset N$. 

\subsection{Lebesgue spaces}

Given a Banach space $\mathbb{W}$, a $\mu$-measurable $f \colon M \rightarrow \mathbb{W}$ is an element of the space $\mathcal{L}^{\infty}( M, \mathbb{W}; \mu )$ if there is $C > 0$ such that $|f(x)| \leq C$ outside a $\mu$-negligible set. The infimum over such $C$ is denoted by $\|f\|_{ \mathcal{L}^{\infty}( M; \mu ) }$.

Given a locally finite $\mu$ on $M$ and $1 \leq p < \infty$, we say that a $\mu$-measurable $f \colon M \rightarrow \mathbb{W}$ is an element of $\mathcal{L}^{p}( M, \mathbb{W}; \mu )$ if
\begin{equation*}
    \| f \|_{ \mathcal{L}^p( M, \mathbb{W}; \mu ) }
    \coloneqq
    \left(
        \int_{M} |f|^p \, d\mu
    \right)^{1/p}
    <
    \infty.
\end{equation*}

For all $1 \leq p \leq \infty$, we omit $\mathbb{W}$ from the notation when $\mathbb{W} = \mathbb{R}$, and the notation $\mathcal{L}^{p}_+( M; \mu )$ refers to \emph{everywhere} nonnegative elements of $\mathcal{L}^{p}( M; \mu )$.

We let $L^{p}( M, \mathbb{W}; \mu )$ denote the Banach space obtained under the equivalence relation $f_1 \sim f_2$ defined by $\| f_1 - f_2 \|_{ \mathcal{L}^{p}( M, \mathbb{W}; \mu ) } = 0$. The induced norm on the quotient space is denoted by $\| \cdot \|_{ L^p( M, \mathbb{W}; \mu ) }$.

We write $f \in \mathcal{L}^{p}_{loc}( M, \mathbb{W}; \mu )$ in case $M$ can be covered by open sets $( U_i )_{ i = 1 }^{ \infty }$ for which $\chi_{ U_i } f \in \mathcal{L}^{p}( M, \mathbb{W}; \mu )$. Here $\chi_{E}$ refers to the \emph{characteristic function} of $E$: the function being one on $E$ and zero otherwise. The space $L^{p}_{loc}( M, \mathbb{W}; \mu )$ is obtained from $\mathcal{L}^{p}_{loc}( M, \mathbb{W}; \mu )$ by identifying elements whose difference is zero $\mu$-almost everywhere.

The function $\infty \cdot \chi_E$ on $M$ is defined by
\begin{equation*}
    \infty \cdot \chi_E(x)
    \coloneqq
    \left\{
    \begin{split}
        &\infty, \quad&&\text{when $x \in E$},
        \\
        &0, \quad&&\text{otherwise}.
    \end{split}
    \right.
\end{equation*}

\subsection{Sobolev spaces}
In this section we consider a Riemannian $n$-manifold $M$ endowed with the Riemannian length distance $d$ and a locally finite Borel measure $\mu$. We also consider a Banach space $\mathbb{W}$.

When defining $W^{1,p}_{loc}( M, \mathbb{W}; \mu )$, we follow the approach due to Heinonen and Koskela \cite{Hei:Kos:98}, see also \cite{Sha:00,Hei:Kos:Sha:Ty:01,Wil:12,Hei:Kos:Sha:Ty:15}. To this end, given $f \colon M \to \mathbb{W}$, we say that a Borel function $\rho \colon M \to [0,\infty]$ is an \emph{upper gradient} of $f$ if
\begin{equation}\label{eq:uppergradientinequality}
    | f( \gamma(1) ) - f( \gamma(0) ) |
    \leq
    \int_{ \gamma } \rho \, ds,
\end{equation}
for every rectifiable $\gamma \colon [0,1] \to N$, where the right-hand side is the \emph{path integral} of $\rho$ over the length measure of $\gamma$. Given that \eqref{eq:uppergradientinequality} is parametrization-independent, we typically assume that $\gamma$ has constant speed.

We say that $\rho$ is a \emph{locally $p$-integrable $p$-weak upper gradient} of $f$ if $\rho \colon M \to [0,\infty]$ is Borel, $\rho \in \mathcal{L}^{p}_{loc}( M )$, and there is a nonnegative Borel $\widehat{\rho}_0 \in \mathcal{L}^{p}_{loc}( M; \mu )$ so that \eqref{eq:uppergradientinequality} holds for the triple $( f, \gamma, \rho )$ for every rectifiable $\gamma \colon [0,1] \to M$ satisfying $\int_{ \gamma } \widehat{\rho}_0 \, ds < \infty$. The class of locally $p$-integrable $p$-weak upper gradients defined as above coincides with the one in \cite{Hei:Kos:98,Hei:Kos:Sha:Ty:01,Wil:12,Hei:Kos:Sha:Ty:15}.

Whenever $f \colon M \to \mathbb{W}$ has a locally $p$-integrable $p$-weak upper gradient it has a \emph{$p$-minimal one}: there is a unique element in $L^{p}_{loc}( M; \mu )$, denoted by $|df|$, such that every nonnegative Borel representative $\rho$ of $|df|$ is a $p$-weak upper gradient of $f$ and $\rho \leq \rho'$ $\mu$-almost everywhere for every locally $p$-integrable $p$-weak upper gradient $\rho'$ of $f$. Moreover, if a mapping $f_1 \colon M \to \mathbb{W}$ has a locally $p$-integrable $p$-weak upper gradient $\rho_1$ and $f = f_1$ $\mu$-almost everywhere, then $\rho \leq \rho_1$ $\mu$-almost everywhere. 

With the above observation at hand, we define the local Sobolev space $$W^{1,p}_{loc}( M, \mathbb{W}; \mu ) \subset L^{p}_{loc}( M, \mathbb{W}; \mu )$$ as the subset of equivalence classes which have a representative with a locally $p$-integrable $p$-weak upper gradient. We may also unambigously refer to the \emph{$p$-minimal $p$-weak upper gradient} $|df|$ of $f \in W^{1,p}_{loc}( M, \mathbb{W}; \mu )$ as an element of $L^{p}_{loc}( M; \mu )$.

For a complete metric space $N$ and an isometric embedding $\iota \colon N \to \mathbb{W}$ into an injective Banach space $\mathbb{V}$, we define the \emph{local Sobolev space} $$W^{1,p}_{loc}( M, N; \mu ) \coloneqq W^{1,p}_{loc}( M, \mathbb{W}; \mu ) \cap \left\{ f \colon f(x) \in N \,\mu\text{-almost everywhere} \right\}.$$ The space is independent of the embedding and $\mathbb{W}$, cf. \cite[Section 7.1]{Hei:Kos:Sha:Ty:15}.

We obtain the space $W^{1,p}( M, N; \mu ) \subset W^{1,p}_{loc}( M, N; \mu )$ by requiring that $f \in L^{p}( M, \mathbb{W}; \mu )$ and $|df| \in L^{p}( M; \mu )$. Notice that the norm on the former space depends on the embedding, see \cite{Haj:07,Haj:09} and \cite[Sections 7.1 and 7.6]{Hei:Kos:Sha:Ty:15} for related discussion.

We omit $\mu$ from the notation when $M$ is a Riemannian $n$-manifold endowed with the Hausdorff measure $\mathcal{H}^n$.

We prove a key lemma used during the construction of the pushforward of currents. The interested reader can easily modify the following proof to $p$-PI spaces, i.e. complete metric spaces $M$ endowed with a doubling Borel measure $\mu$ and supporting a weak $(1,p)$-Poincaré inequality in the sense of Heinonen and Koskela, see \cite{Hei:Kos:98} for the definitions. The result also generalizes to bounded uniform domains on $p$-PI spaces due to \cite[Theorem 1.8, Proposition 1.9 and Proposition 1.10]{GB:Iko:Zhu:22}. However, we specialize the lemma to our needs.
\begin{lemma}\label{lemm:energydensity}
Let $M$ be a compact Riemannian $n$-manifold, with or without boundary, $\mathbb{V}$ be an injective Banach space, and $f \in W^{1,p}( M, \mathbb{V} )$ for $1 \leq p <\infty$. Then there exists a sequence $( f_j )_{ j = 1 }^{ \infty }$ from $\LIP_{bs}( M, \mathbb{V} )$ satisfying
\begin{equation*}
    \sum_{ j = 1 }^{ \infty }
    \left(
    \| f - f_j \|_{ L^{p}( M, \mathbb{V} ) }
    +
    \| |df| - \lip( f_{j} ) \|_{ L^{p}( M ) }
    \right)
    <
    \infty.
\end{equation*}
In fact, we may assume the existence of finite-dimensional subspaces $\mathbb{W}_j \subset \mathbb{W}_{j+1}$ such that $f_j$ takes values in $\mathbb{W}_j$.
\end{lemma}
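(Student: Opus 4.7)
The plan is to combine a Hajłasz--Lusin type Lipschitz truncation of $f$, using injectivity of $\mathbb{V}$ through \Cref{lemm:mcshane}, with a finite-dimensional refinement via nodal interpolation on a sequence of refining triangulations of $M$.

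Since $M$ is a compact Riemannian manifold, it supports a $(1,p)$-Poincaré inequality, and the Hardy--Littlewood maximal operator is weak-$(p,p)$ bounded. These yield, for Lebesgue points $x,y$ of $f$, the pointwise estimate
\begin{equation*}
    |f(x) - f(y)| \le C\, d(x,y) \bigl( \mathcal{M}|df|(x) + \mathcal{M}|df|(y) \bigr).
\end{equation*}
For each $\lambda > 0$, the set $E_\lambda := \{\mathcal{M}|df| \le \lambda\} \cap \{|f| \le \lambda\}$ has complement of $\mathcal{H}^n$-measure $\lesssim \lambda^{-p}$, and $f|_{E_\lambda}$ is $C\lambda$-Lipschitz with bounded image. \Cref{lemm:mcshane}, applicable because $\mathbb{V}$ is injective, extends this to a $C\lambda$-Lipschitz $\tilde f_\lambda : M \to \mathbb{V}$; compactness of $M$ forces bounded support, so $\tilde f_\lambda \in \LIP_{bs}(M,\mathbb{V})$. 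Locality of minimal upper gradients gives $\lip(\tilde f_\lambda) = |df|$ almost everywhere on $\mathrm{int}(E_\lambda)$, and $\lip(\tilde f_\lambda) \le C\lambda \le C\mathcal{M}|df|$ on $M \setminus E_\lambda$. Absolute continuity of the $L^p$-integral then yields
\begin{equation*}
    \| f - \tilde f_\lambda \|_{L^p}^p + \bigl\| |df| - \lip(\tilde f_\lambda) \bigr\|_{L^p}^p \lesssim \int_{M \setminus E_\lambda} \bigl( |f|^p + (\mathcal{M}|df|)^p \bigr) \,d\mathcal{H}^n \xrightarrow{\lambda \to \infty} 0,
\end{equation*}
using $f, \mathcal{M}|df| \in L^p(M)$; selecting $\lambda_j \nearrow \infty$ sufficiently fast renders the resulting errors summable.

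For the finite-dimensional claim, I would replace each $\tilde f_{\lambda_j}$ by its piecewise-affine barycentric interpolant $f_j$ on a shape-regular triangulation $\tau_j$ of $M$ of mesh $\delta_j \to 0$, each $\tau_{j+1}$ a refinement of $\tau_j$. Then $f_j \in \LIP_{bs}(M, \mathbb{V})$ takes values in the finite-dimensional subspace $\mathbb{W}_j := \mathrm{span}\{ \tilde f_{\lambda_k}(v) : k \le j,\ v \text{ a vertex of } \tau_k \}$, with $\mathbb{W}_j \subset \mathbb{W}_{j+1}$ by construction. The uniform bound $\| f_j - \tilde f_{\lambda_j} \|_{L^\infty} \le C\delta_j \lambda_j$ controls the $L^p$-error once $\delta_j$ is chosen small compared to $\lambda_j$, and the triangle inequality $\| |df| - \lip(f_j) \|_{L^p} \le \| |df| - \lip(\tilde f_{\lambda_j}) \|_{L^p} + \| \lip(\tilde f_{\lambda_j}) - \lip(f_j) \|_{L^p}$ reduces the upper-gradient control to the discrepancy introduced by interpolation.

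The main obstacle is this last discrepancy: on the interior of each simplex $\sigma$ of $\tau_j$, $\lip(f_j)$ is the constant slope of the affine map determined by the vertex values of $\tilde f_{\lambda_j}$, whereas $\lip(\tilde f_{\lambda_j})$ may genuinely oscillate within $\sigma$. A Poincaré-type comparison on $\sigma$ combined with Lebesgue differentiation shows that the vertex-determined slope approximates the $\sigma$-average of $\lip(\tilde f_{\lambda_j})$ in $L^p$, with an error that can be made summable by further shrinking $\delta_j$ relative to $\lambda_j$ at each step; this is the step that ultimately dictates how fast $\delta_j$ must decay.
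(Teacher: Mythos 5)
Your approach is genuinely different from the paper's. You go through a Haj\l{}asz--Lusin truncation plus extension by \Cref{lemm:mcshane}, then try to reach finite-dimensional targets by barycentric interpolation on triangulations of $M$. The paper instead invokes \Cref{lem:metrapproximation} (injective Banach spaces have the metric approximation property) and then \Cref{prop:approximation}, which abstractly yields $h_j \in W^{1,p}(M,\mathbb{V})$ with finite-dimensional essential range; after that the Lipschitz density is a purely finite-dimensional matter (John's theorem plus the real-valued case), and \cite[Theorem 1.8]{EB:Sou:21} identifies $\lip(f_j)$ with $|df_j|$. That route dodges the two places where your argument has real gaps.

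First, the claim that the error is controlled by $\int_{M \setminus E_\lambda} (|f|^p + (\mathcal{M}|df|)^p)$, ``using $f, \mathcal{M}|df| \in L^p$,'' breaks at $p=1$: the Hardy--Littlewood maximal operator is only weak-$(1,1)$, so $\mathcal{M}|df| \notin L^1$ in general, yet the lemma asserts the conclusion for all $1 \le p < \infty$. A repair exists (split off $\lip(\tilde f_\lambda) \le C\lambda$ on $\{\mathcal{M}|df|>\lambda\}$ and use the restricted weak-$(1,1)$ estimate $\lambda\,\mathcal{H}^n(\{\mathcal{M}g > 2\lambda\}) \lesssim \int_{\{|g|>\lambda\}} |g|$), but as written the estimate is false for $p=1$. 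Also, you should work on $E_\lambda$ itself, not $\mathrm{int}(E_\lambda)$: the locality of minimal weak upper gradients gives $|d\tilde f_\lambda| = |df|$ $\mathcal{H}^n$-a.e.\ on $E_\lambda$, and the sublevel set $E_\lambda$ may have empty interior.

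Second, and more seriously, the finite-dimensional step is not actually carried out. You acknowledge that controlling $\| \lip(\tilde f_{\lambda_j}) - \lip(f_j) \|_{L^p}$ is the main obstacle, but the tools you name (``Poincar\'e-type comparison'' and ``Lebesgue differentiation'') do not, by themselves, give it. What you would need is an $L^p$ version of the statement that the Lipschitz slope of the barycentric interpolant converges, as the mesh shrinks, to the pointwise Lipschitz constant of a $\mathbb{V}$-valued Lipschitz map. For Banach targets without the Radon--Nikodym property (and injective spaces such as $\ell^\infty$ and $L^\infty(N)$ are prototypical examples), classical Rademacher differentiation fails, so the right input is Kirchheim-type metric differentiability and its Lebesgue-point refinement; on top of this, the ``affine'' interpolants on a Riemannian manifold are affine only in chart coordinates, so there is extra bookkeeping with metric distortion and with the mismatch between the ambient pointwise Lipschitz constant and the chart one. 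None of this is insurmountable, but it is precisely the machinery the paper avoids by relegating finite-dimensional reduction to \Cref{prop:approximation}. As submitted, your argument does not establish the finite-dimensional refinement and therefore does not prove the full statement.
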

During the proof, we use the following definition and the subsequent lemmas. We say that a Banach space $\mathbb{H}$ has the \emph{metric approximation property} if for every compact set $K \subset \mathbb{H}$ and every $\epsilon > 0$, there exists a finite rank linear map $T \colon \mathbb{H} \rightarrow \mathbb{H}$ with operator norm at most one for which
\begin{equation*}
    \sup_{ z \in K } | z - T(z) | \leq \epsilon.
\end{equation*}
\begin{lemma}\label{lem:metrapproximation}
Every injective Banach space $\mathbb{V}$ has the metric approximation property.
\end{lemma}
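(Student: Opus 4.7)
The plan is to reduce the statement to the concrete case $\mathbb{V} = L^{\infty}(N)$ via \Cref{lem:inj:infty}, and then produce the required finite-rank operator on $L^{\infty}(N)$ by a direct partition argument.

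For the reduction, suppose the metric approximation property has been established for every $L^{\infty}(N)$. Given an injective $\mathbb{V}$, let $N$ be the closed unit ball of $\mathbb{V}^{\ast}$ and let $\iota \colon \mathbb{V} \to L^{\infty}(N)$ and $P \colon L^{\infty}(N) \to \mathbb{V}$ be as in \Cref{lem:inj:infty}, so that $\|\iota\| = \|P\| = 1$ and $P \circ \iota = \mathrm{id}_{\mathbb{V}}$. Given a compact $K \subset \mathbb{V}$ and $\epsilon > 0$, the image $\iota(K)$ is compact in $L^{\infty}(N)$, so we may choose a finite-rank linear $S \colon L^{\infty}(N) \to L^{\infty}(N)$ of operator norm at most one with $\sup_{w \in \iota(K)} |w - S(w)| \leq \epsilon$. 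Setting $T = P \circ S \circ \iota$ gives a finite-rank linear map $\mathbb{V} \to \mathbb{V}$ of operator norm at most one, and for $v \in K$ we have $|v - T(v)| = |P(\iota(v) - S(\iota(v)))| \leq \epsilon$.

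It remains to handle $L^{\infty}(N)$ for an arbitrary nonempty index set $N$. Fix a compact $K \subset L^{\infty}(N)$ and $\epsilon > 0$. By total boundedness pick $f_1, \ldots, f_m \in K$ forming an $(\epsilon/3)$-net. The evaluation map $F \colon N \to \mathbb{R}^m$, $F(x) = (f_1(x), \ldots, f_m(x))$, has bounded image, so $\mathbb{R}^m$ can be partitioned into finitely many Borel cells of diameter at most $\epsilon/3$ whose preimages $A_1, \ldots, A_r$ partition $N$ (discarding any empty preimages). Choose $x_j \in A_j$ for each $j$ and define
\begin{equation*}
    T(f)(x) \coloneqq \sum_{ j = 1 }^{ r } f(x_j) \chi_{A_j}(x), \quad f \in L^{\infty}(N).
\end{equation*}
Then $T$ is linear and of rank at most $r$. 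Since the $A_j$ are pairwise disjoint and cover $N$, for every $x \in N$ there is a unique $j$ with $x \in A_j$ and $|T(f)(x)| = |f(x_j)| \leq \|f\|_{\infty}$, so $\|T\| \leq 1$. For each $i$ and each $x \in A_j$, the construction gives $|f_i(x) - f_i(x_j)| \leq \epsilon/3$, hence $\|f_i - T(f_i)\|_{\infty} \leq \epsilon/3$. A triangle inequality, using $\|T\| \leq 1$, yields $\|f - T(f)\|_{\infty} \leq \epsilon$ for every $f \in K$.

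The only mild subtlety is ensuring the operator has norm at most one rather than some larger universal constant; this is guaranteed by the disjointness of the $A_j$, which lets the approximating operator act as a conditional expectation onto the finite partition. The rest of the argument is a standard compactness and triangle-inequality computation together with the characterization of injective spaces as norm-one complemented subspaces of some $L^{\infty}(N)$.
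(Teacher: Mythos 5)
Your proof is correct. The reduction step — pushing the compact set through the norm-one embedding $\iota$, applying the approximation in $L^{\infty}(N)$, and pulling back through the norm-one projection $P$ supplied by \Cref{lem:inj:infty} — is exactly the reduction the paper uses. Where you differ is in the second step: the paper simply cites Köthe \cite[p. 258, (9)]{Ko:79} for the fact that $L^{\infty}(Z)$ has the metric approximation property, whereas you construct the finite-rank operator explicitly as a conditional-expectation-style averaging operator associated to a finite partition of $N$ pulled back from a mesh in $\mathbb{R}^m$. Your argument is fully self-contained and elementary; the key points (disjointness of the cells to get operator norm $\leq 1$, using the $(\epsilon/3)$-net together with $\|T\|\leq 1$ to pass from the net to all of $K$) are handled correctly. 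Note also that since $L^{\infty}(N)$ in this paper is the space of all bounded real-valued functions on the index set $N$ with no measurability constraint, the characteristic functions $\chi_{A_j}$ automatically lie in $L^{\infty}(N)$, so no Borel regularity of the cells is actually needed. In short: same reduction, but you replace the paper's citation with a direct construction, which is a perfectly valid and arguably more transparent route.
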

\begin{proof}[Proof of \Cref{lem:metrapproximation}]
\Cref{lem:inj:infty} reduces the proof to the special case $L^{\infty}(Z)$ for $Z \neq \emptyset$. The claim in that case follows e.g. from \cite[p. 258, (9)]{Ko:79}.
\end{proof}
We also need the following result.
\begin{proposition}[Proposition 4.4, \cite{GB:Iko:Zhu:22}]\label{prop:approximation}
Let $M$ be a complete and separable metric space endowed with a Borel measure $\mu$ that is finite on bounded sets, $\mathbb{V}$ is a Banach space with the metric approximation property, and $1 \leq p < \infty$.

If $f \in W^{1,p}( M, \mathbb{V}; \mu )$ and $\epsilon > 0$, there is $h \in W^{1,p}( M, \mathbb{V} )$ so that
\begin{equation*}
    \| f - h \|_{ L^{p}( M, \mathbb{V}; \mu ) }
    +
    \| |df| - |dh| \|_{ L^{p}( M; \mu ) }
    <
    \epsilon
\end{equation*}
and there is a finite-dimensional subspace $\mathbb{W} \subset \mathbb{V}$ so that the image of each representative of $h$ satisfies $h( M \setminus E ) \subset \mathbb{W}$ for some $\mu$-negligible set $E \subset M$.
\end{proposition}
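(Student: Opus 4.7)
The plan is to reduce to the finite-dimensional codomain case via Proposition \ref{prop:approximation}, then approximate the resulting map by a Lipschitz map using the classical density of Lipschitz maps in $W^{1,p}$ with a $\mathbb{R}^k$-valued target.

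First, I would apply Proposition \ref{prop:approximation} iteratively with $\epsilon_j = 2^{-j-1}$: for each $j \in \N$, choose $h_j \in W^{1,p}(M, \mathbb{V})$ together with a finite-dimensional subspace $\tilde{\mathbb{W}}_j \subset \mathbb{V}$ such that
\[
\|f - h_j\|_{L^p(M, \mathbb{V})} + \||df| - |dh_j|\|_{L^p(M)} < 2^{-j-1},
\]
and some representative of $h_j$ takes values in $\tilde{\mathbb{W}}_j$ outside an $\mathcal{H}^n$-negligible set. After passing to this representative we may assume $h_j(M) \subset \tilde{\mathbb{W}}_j$.

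Next, for each $j$ I would approximate the finite-dimensional-valued Sobolev map $h_j$ by a Lipschitz map $f_j : M \to \tilde{\mathbb{W}}_j$. Upon fixing a linear isomorphism $\tilde{\mathbb{W}}_j \cong \R^{k_j}$, the space $W^{1,p}( M, \tilde{\mathbb{W}}_j)$ is canonically identified with $W^{1,p}(M, \R^{k_j})$ with an equivalent norm, and in the latter space Lipschitz maps are dense in $W^{1,p}$-norm (by mollifying coordinatewise in local Riemannian charts and patching via a finite partition of unity, using compactness of $M$). This yields a Lipschitz $f_j : M \to \tilde{\mathbb{W}}_j$ with
\[
\|h_j - f_j\|_{L^p(M, \mathbb{V})} + \||dh_j| - |df_j|\|_{L^p(M)} < 2^{-j-1}.
\]
Since $f_j$ is Lipschitz and $M$ is Riemannian, the Rademacher-type identity $\lip(f_j) = |df_j|$ holds $\mathcal{H}^n$-almost everywhere, so the second summand is the same as $\||dh_j| - \lip(f_j)\|_{L^p(M)}$.

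Finally I would assemble the conclusion. Compactness of $M$ gives $\LIP(M, \mathbb{V}) = \LIP_{bs}(M, \mathbb{V})$, so the $f_j$ lie in the required class. Setting
\[
\mathbb{W}_j \coloneqq \tilde{\mathbb{W}}_1 + \tilde{\mathbb{W}}_2 + \cdots + \tilde{\mathbb{W}}_j
\]
produces a nested chain of finite-dimensional subspaces with $f_j(M) \subset \mathbb{W}_j$. Two applications of the triangle inequality combine the two bounds above into
\[
\|f - f_j\|_{L^p(M, \mathbb{V})} + \||df| - \lip(f_j)\|_{L^p(M)} < 2^{-j},
\]
and summing the geometric series yields a finite total. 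The main subtle point is the intermediate Lipschitz approximation in the finite-dimensional codomain: one must arrange convergence of pointwise Lipschitz constants of the approximants to the minimal $p$-weak upper gradient of the limit. This is the reason for first reducing to a finite-dimensional codomain via Proposition \ref{prop:approximation}, where standard mollification on a compact Riemannian manifold is available and where the identity $\lip = |d\cdot|$ a.e. for Lipschitz maps converts $W^{1,p}$-convergence of upper gradients into convergence of pointwise Lipschitz constants.
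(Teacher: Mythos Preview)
Your write-up is not a proof of the stated Proposition~\ref{prop:approximation}; it \emph{invokes} Proposition~\ref{prop:approximation} in its first step, so as a proof of that proposition it is circular. What you have actually written is a proof of Lemma~\ref{lemm:energydensity}, which takes Proposition~\ref{prop:approximation} as input.

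If the intended target was Lemma~\ref{lemm:energydensity}, then your argument is essentially the same as the paper's. The paper also first applies Proposition~\ref{prop:approximation} with $\epsilon_j=2^{-j-1}$ (after noting, via Lemma~\ref{lem:metrapproximation}, that injective $\mathbb{V}$ has the metric approximation property), then passes to a Lipschitz approximant in the finite-dimensional target, uses that $\lip(g)=|dg|$ almost everywhere for Lipschitz $g$, and concludes by the triangle inequality. The only cosmetic differences are that the paper cites \cite{EB:Sou:21} for the Lipschitz density and the $\lip=|d\cdot|$ identity (and John's theorem for the finite-dimensional reduction) where you sketch mollification in charts, and that you make the nesting $\mathbb{W}_j\subset\mathbb{W}_{j+1}$ explicit by summing subspaces, which the paper simply asserts. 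One small omission on your side: to apply Proposition~\ref{prop:approximation} you need $\mathbb{V}$ to have the metric approximation property, and this is where the injectivity hypothesis of Lemma~\ref{lemm:energydensity} enters (via Lemma~\ref{lem:metrapproximation}); you should mention this link.

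If you genuinely meant to prove Proposition~\ref{prop:approximation} itself, note that the paper does not prove it either---it is quoted from \cite{GB:Iko:Zhu:22}. A direct proof would use the metric approximation property to find a finite-rank operator $T$ with $\|T\|\le 1$ approximating the identity on a suitable compact set in $\mathbb{V}$ and then set $h=T\circ f$; your argument does not go in that direction.
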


\begin{proof}[Proof of \Cref{lemm:energydensity}]
We apply \Cref{lem:metrapproximation} and \Cref{prop:approximation} for $\epsilon_j = 2^{-j-1}$. The conclusion is as follows. There is a sequence of finite-dimensional subspaces $\mathbb{W}_{ j } \subset \mathbb{W}_{ j + 1 } \subset \mathbb{V}$ and $h_j \in W^{1,p}( M, \mathbb{V} )$, taking values in $\mathbb{W}_j$ up to a negligible set, such that
\begin{equation*}
    \| f - h_j \|_{ L^{p}( M, \mathbb{V} ) }
    +
    \| |df| - |dh_j| \|_{ L^{p}( M ) }
    \leq
    2^{-j-1}.
\end{equation*}
It follows, e.g. from \cite[Theorem 1.9]{EB:Sou:21} that $\LIP_{bs}( M )$ is norm-dense in $W^{1,p}( M )$ (although the conclusion in our case is classical). Given that $\mathbb{W}_j$ have finite dimension, the norm-density of $\LIP_{bs}( M, \mathbb{W}_j )$ in $W^{1,p}( M, \mathbb{W}_j )$ follows from the corresponding real-valued case by John's theorem and componentwise approximation; see e.g. \cite[Lemma 4.5]{GB:Iko:Zhu:22}. Now \cite[Theorem 1.8]{EB:Sou:21} implies that for $g \in \LIP_{bs}( M, \mathbb{W}_j )$, the pointwise Lipschitz constant $\lip(g)$ and $p$-minimal $p$-weak upper gradient $|dg|$ coincide. Combining this with the stated norm-density shows the existence of $f_j \in \LIP_{bs}( M,  \mathbb{W}_j ) \subset \LIP_{bs}( M, \mathbb{V} )$ for which
\begin{equation*}
    \| h_j - f_j \|_{ L^{p}( M,  \mathbb{V} ) }
    +
    \| |dh_j| - \lip( f_j ) \|_{ L^{p}( M ) }
    \leq
    2^{-j-1}.
\end{equation*}
The claim follows from triangle inequality and from summing over $j$.
\end{proof}

\subsection{Support and a measure-theoretic axiom}\label{sec:technicalassumption}
Given a subset $E$ of a metric space $N$, we say that a Borel measure $\mu$ on $N$ is \emph{concentrated on $E$} if $\mu( N \setminus E ) = 0$.

The \emph{support} of $\mu$ is the set of all $x \in N$ which satisfy $\mu( B( x, r ) ) > 0$ for every $r > 0$. We say that $\mu$ is \emph{compactly supported} if $\mu$ is \emph{concentrated} on a compact set. In particular, the support is compact and $\mu$ is concentrated on its support.

We recall that for any finite Borel measure $\mu$ on $N$, the support is separable, cf. \cite[2.2.16]{Fed:69}. If $\mu$ is concentrated on its support, we may consider $\mu$ as a finite measure on a complete and \emph{separable} space. Then \cite[Theorem 7.1.7]{Bog:07:II} implies the following standard fact.
\begin{lemma}\label{lemm:Radon}
If $N$ is a complete metric space and $\mu$ a finite Borel measure on $N$ that is concentrated on its support, then
\begin{equation*}
    \mu( B )
    =
    \sup\left\{
        \mu( K )
        \colon
        B \supset K \text{ is compact}
    \right\}
    \quad\text{for every Borel set $B \subset N$.}
\end{equation*}
In other words, $\mu$ is Radon.
\end{lemma}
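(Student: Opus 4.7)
The plan is to reduce the statement to the classical fact that every finite Borel measure on a Polish space is inner regular by compact sets, which is exactly the content of the cited Bogachev reference. The main work is to legitimately pass from the (possibly non-separable) metric space $N$ to a Polish subspace on which $\mu$ lives.

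First I would let $S \subset N$ denote the support of $\mu$. As recalled right before the statement, the support of a finite Borel measure is separable. Since the support is closed in $N$ and $N$ is complete, $S$ is itself a complete separable metric space, hence Polish. Moreover, by hypothesis $\mu$ is concentrated on $S$, so defining $\mu_S(A) \coloneqq \mu(A \cap S)$ for Borel $A \subset N$ gives a finite Borel measure on the Polish space $S$, and $\mu = \mu_S$ in the sense of measures.

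Next I would invoke the Bogachev theorem \cite[Theorem 7.1.7]{Bog:07:II}: every finite Borel measure on a Polish space is Radon, so that for any Borel $B' \subset S$ one has
\begin{equation*}
    \mu_S(B') = \sup\left\{ \mu_S(K) \colon B' \supset K \text{ is compact}\right\}.
\end{equation*}
For a general Borel set $B \subset N$, the set $B \cap S$ is Borel in $S$ (with its subspace topology), and any compact subset $K \subset B \cap S$ is also compact in $N$ (compactness is an intrinsic topological property). Hence, given $\varepsilon > 0$, choose compact $K \subset B \cap S$ with $\mu_S(K) > \mu_S(B \cap S) - \varepsilon$. Since $\mu$ is concentrated on $S$, we have $\mu(B) = \mu(B \cap S) = \mu_S(B \cap S)$ and $\mu(K) = \mu_S(K)$, so $K$ is a compact subset of $B$ in $N$ with $\mu(K) > \mu(B) - \varepsilon$. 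Taking the supremum gives the desired equality.

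I do not anticipate any real obstacle; the only point to take care of is ensuring the passage between $N$ and $S$ preserves both the Borel structure and compactness, which is automatic since $S$ is closed in $N$ and the subspace topology on $S$ coincides with its intrinsic topology. The heavy lifting is entirely done by the cited Radon/tightness theorem for Polish spaces.
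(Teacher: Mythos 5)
Your proposal is correct and follows exactly the route the paper sketches in the paragraph preceding the lemma: restrict $\mu$ to its separable, closed (hence Polish) support and invoke the Radon property for finite Borel measures on Polish spaces from Bogachev's Theorem 7.1.7, then note compactness is intrinsic so the approximating compacta serve in $N$ as well. Your write-up simply makes the transfer between $N$ and the support explicit.
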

It is clear that if a finite Borel measure $\mu$ on a complete metric space is Radon, then $\mu$ is concentrated on a countable union of compact sets, i.e. on a $\sigma$-compact set. Observe that every finite Borel measure concentrated on a $\sigma$-compact set is concentrated on its support. Hence the following corollary of \Cref{lemm:Radon} holds.
\begin{lemma}\label{lemm:separable}
If $N$ is a complete metric space and $\mu$ a finite Borel measure on $N$, then the following are equivalent:
\begin{enumerate}
    \item $\mu$ is concentrated on its support;
    \item $\mu$ is concentrated on a $\sigma$-compact set;
    \item $\mu$ is Radon;
    \item $\mu$ is concentrated on a closed and separable subset of $N$.
\end{enumerate}
\end{lemma}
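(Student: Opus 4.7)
The plan is to close a cycle of implications. The three remarks preceding the statement already supply three legs: Lemma~\ref{lemm:Radon} is precisely (1) $\Rightarrow$ (3); the sentence following it gives (3) $\Rightarrow$ (2) by the fact that a Radon measure on a complete metric space concentrates on a countable union of compacts; and the next sentence delivers (2) $\Rightarrow$ (1) by noting that any finite Borel measure concentrated on a $\sigma$-compact set is automatically concentrated on its support. Consequently (1), (2), (3) are already equivalent, and the remaining task is to incorporate (4) via the two implications (1) $\Rightarrow$ (4) and (4) $\Rightarrow$ (1).

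The first of these is essentially definitional. I would take $F = \operatorname{spt}(\mu)$, which is closed by construction and separable by the Federer~2.2.16 fact recalled immediately above Lemma~\ref{lemm:Radon}. Then condition (1) reads $\mu(N \setminus F) = 0$, which is exactly (4) with this choice of $F$.

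For (4) $\Rightarrow$ (1), I would let $F \subset N$ be closed, separable, and $\mu$-conull. Since $F$ is closed and $\mu(N \setminus F) = 0$, one has $\operatorname{spt}(\mu) \subset F$, so it suffices to show $\mu(F \setminus \operatorname{spt}(\mu)) = 0$. For each $x \in F \setminus \operatorname{spt}(\mu)$ the definition of support furnishes a radius $r_x > 0$ with $\mu(B(x, r_x)) = 0$. The metric subspace $F$ is separable, hence second countable, hence Lindelöf, so the open cover $\{ B(x, r_x) \cap F : x \in F \setminus \operatorname{spt}(\mu) \}$ admits a countable subcover. Countable subadditivity of $\mu$ then forces $\mu(F \setminus \operatorname{spt}(\mu)) = 0$, and combined with $\mu(N \setminus F) = 0$ this yields (1).

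The step I expect to require the most attention---though it is hardly an obstacle in this setting---is the Lindelöf reduction in (4) $\Rightarrow$ (1); the separability hypothesis in (4) is invoked precisely to produce the countable subcover, and this is the one place where (4) is used in a genuinely non-tautological way. Everything else is either a direct quote of the text preceding the statement or an unpacking of the definition of the support.
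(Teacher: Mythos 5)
Your argument is correct and follows essentially the route the paper intends: the preceding paragraph already supplies the cycle (1) $\Rightarrow$ (3) $\Rightarrow$ (2) $\Rightarrow$ (1), and the paper leaves the connection to (4) implicit. Your (1) $\Rightarrow$ (4) (support is closed by definition and separable by the cited Federer~2.2.16 fact) and (4) $\Rightarrow$ (1) (Lindelöf/second-countability of the separable subspace to get a countable subcover of $F \setminus \operatorname{spt}(\mu)$ by $\mu$-null balls) are exactly the natural way to close the equivalence, and the Lindelöf step is where separability does its genuine work; this is consistent with what the ``observe that'' sentence in the paper already tacitly uses for (2) $\Rightarrow$ (1).
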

We recall the following related fact.
\begin{lemma}[2.2.16, \cite{Fed:69}]\label{lemm:Ulam}
If $N$ is a complete metric space which contains a dense subset whose cardinality is an Ulam number, then every finite Borel measure on $N$ is concentrated on its support.
\end{lemma}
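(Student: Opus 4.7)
My plan is to reduce the lemma to showing that $\mu$ is concentrated on a separable closed subset of $N$; by \Cref{lemm:separable} this is equivalent to concentration on the support. The essential tool is \emph{Ulam's theorem}: every finite, countably additive measure defined on the entire power set of an Ulam cardinal and vanishing on singletons is identically zero.

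For each $n \geq 1$, I would use Zorn's lemma to pick a maximal $(1/n)$-separated subset $E_n \subset N$; by maximality it is a $(1/n)$-net. Since $D$ is dense, each $x \in E_n$ admits a companion $y_x \in D \cap B(x, 1/(2n))$, and distinct $x$ yield distinct $y_x$ by the separation, so $|E_n| \leq |D|$ is an Ulam number (as Ulam numbers are closed under taking subcardinals). The family $\{B(x, 1/(2n))\}_{x \in E_n}$ is pairwise disjoint, and the crucial observation is that any union of open balls is open, hence Borel. Therefore the set function
\[
    \nu_n \colon \mathcal{P}(E_n) \to [0, \infty), \qquad \nu_n(A) = \mu\Bigl(\bigsqcup_{x \in A} B(x, 1/(2n))\Bigr),
\]
is well-defined on the \emph{full} power set, and by disjointness and the $\sigma$-additivity of $\mu$ it is a finite countably additive measure. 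Since the disjoint balls have total mass at most $\mu(N)$, the atomic part $E_n^* := \{x \in E_n : \nu_n(\{x\}) > 0\}$ is countable; the restriction of $\nu_n$ to $\mathcal{P}(E_n \setminus E_n^*)$ is then a finite countably additive measure on the power set of an Ulam cardinal, vanishing on singletons, so Ulam's theorem delivers
\[
    \mu\Bigl(\bigsqcup_{x \in E_n \setminus E_n^*} B(x, 1/(2n))\Bigr) = 0.
\]

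To conclude, I set $C := \overline{\bigcup_n E_n^*}$, which is closed and separable, and aim to show $\mu(N \setminus C) = 0$. For $y \in N \setminus C$, the distance $d(y, \bigcup_n E_n^*)$ is strictly positive, so at all sufficiently fine scales $n$ the nearest point of $E_n$ to $y$ lies in $E_n \setminus E_n^*$. The principal obstacle is that the Ulam conclusion controls only the disjoint balls of radius $1/(2n)$, whereas the $(1/n)$-net property covers by the larger balls of radius $1/n$, so $y$ might inhabit the annular residual $B(x, 1/n) \setminus B(x, 1/(2n))$ for some $x \in E_n \setminus E_n^*$. I plan to resolve this by running the construction over a dyadic sequence of radii $2^{-k}$, $k \geq 1$; a point $y$ persistently trapped in annular residuals at all dyadic scales is a limit of centers of null balls, and a careful absorption argument places it in a marginally enlarged separable closed set. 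Once this aggregation is verified, \Cref{lemm:separable} closes the proof.
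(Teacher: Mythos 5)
The paper does not prove this lemma; it is cited verbatim from Federer \cite[2.2.16]{Fed:69}, so there is no in-paper argument to compare against and your attempt must be judged from scratch. The reduction to separable concentration via \Cref{lemm:separable}, the construction of the Ulam-cardinality nets $E_n$, the countable additivity of $\nu_n$ on $\mathcal{P}(E_n)$ (disjoint unions of disjoint open balls are open, hence Borel, hence $\mu$-measurable), the countability of the atom set $E_n^*$, and the Ulam step killing $\nu_n$ on $\mathcal{P}(E_n\setminus E_n^*)$ are all sound. The gap you flag yourself is, however, genuine, and the sketched repair does not close it.

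The problem is that you control $\mu$ only on $\bigcup_{x\in E_n\setminus E_n^*}B(x,\tfrac{1}{2n})$, while the residual set $S_n := N \setminus \bigcup_{x\in E_n}B(x,\tfrac{1}{2n})$ is completely unconstrained and can a priori carry nearly all of $\mu(N)$ (picture a diffuse measure in an infinite-dimensional Banach space concentrated in the interstices of the net). Passing to dyadic scales does not help: a point $y\in N\setminus\overline{\bigcup_k E_k^*}$ trapped for all large $k$ in some annulus $B(x_k,2^{-k})\setminus B(x_k,2^{-k-1})$ with $x_k\in E_k\setminus E_k^*$ is indeed a limit of null-ball centers, but that only places $y$ in the closure of $\bigcup_k(E_k\setminus E_k^*)$, which is neither separable nor of controlled measure; the Borel set $\bigcap_{k\ge k_0}\bigcup_{x\in E_k\setminus E_k^*}\bigl(B(x,2^{-k})\setminus B(x,2^{-k-1})\bigr)$ has $\mu$-measure unconstrained by your argument, so no ``absorption'' can place its mass in a separable closed set. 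The underlying tension is that you need a decomposition of $N$ that is simultaneously disjoint (so the push-down is additive), covering (so no mass escapes the Ulam step), and with all arbitrary subunions $\mu$-measurable (so the push-down is defined on the full power set). Disjoint open balls give the first and third but not the second; the transfinite disjointification $V_\alpha := B(x_\alpha,\tfrac1n)\setminus\bigcup_{\beta<\alpha}B(x_\beta,\tfrac1n)$ gives the first two but $\bigcup_{\alpha\in A}V_\alpha$ need not be Borel for arbitrary $A$. Resolving that tension --- typically via a transfinite induction over a well-ordered ball cover centered on the dense set, using countable continuity of $\mu$ at limits of countable cofinality and the Ulam property to rule out jumps at limits of uncountable cofinality, together with some care about outer regularity --- is the substance of the lemma, and it is precisely where your write-up stops.
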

We refer the reader to \cite[2.1.6]{Fed:69} for the definition of Ulam numbers but recall that e.g. all accessible cardinals are Ulam numbers. In particular, $\ell^{\infty}$ satisfies the assumptions of \Cref{lemm:Ulam}.

As Ambrosio and Kirchheim remark, see \cite[p. 12]{AK:00:current}, when developing their theory of metric currents in \emph{every} complete metric space, they make use of the simplifying axiom, consistent with standard ZFC set theory, that the cardinality of \emph{every set} is an Ulam number. We use the same axiom. They remark that even without the axiom, an equivalent theory is obtained in complete metric spaces whose finite Borel measures are concentrated on $\sigma$-compact sets; recall also \Cref{lemm:separable}. To see the relevance of the axiom for our main theorem, see \Cref{rem:remove:technical}.

\subsection{Ambrosio-Kirchheim currents}\label{sec:current:cochain}
We recall the basic definitions and properties of Ambrosio--Kirchheim currents for any complete metric space $N$.

\subsubsection{Currents}
A \emph{$0$-current} is any linear function $T \colon \LIP_b( N ) \rightarrow \mathbb{R}$ for which there is a finite Borel measure $\mu$ with
\begin{equation}\label{eq:0-currentmass}
    | T( g ) |
    \leq
    \int_N |g| \,d\mu
    \quad\text{for every $g \in \LIP_{b}(N)$.}
\end{equation}

If $k \geq 1$, a \emph{$k$-current} is a function $T \colon \LIP_b( N ) \times \LIP( N, \mathbb{R}^{k} ) \rightarrow \mathbb{R}$ satisfying the following axioms:
\begin{enumerate}
    \item (multilinearity): $(g, h) \mapsto T( g, h )$ is multilinear with respect to $g$ and the components of $h$;
    \item ($\text{weak}^{\star}$-continuity): for every $g \in \LIP_b( N )$ and every $h_j \in \LIP( N, \mathbb{R}^k )$  converging to $h \in \LIP( N, \mathbb{R}^k )$ pointwise and $\sup_j \LIP( h_j ) < \infty$, we have
    \begin{equation*}
        T( g, h ) = \lim_{ j \rightarrow \infty } T( g, h_j );
    \end{equation*}
    \item (locality): $T( g, h ) = 0$ if some component of $h$ is constant in an open neighbourhood of $\left\{ g \neq 0 \right\}$;
    \item (finite mass): there is a finite Borel measure $\mu$ on $N$ for which
    \begin{equation}\label{eq:finitemass:axiom}
        |T( g, h )| \leq \int_N |g| \,d\mu
    \end{equation}
    whenever $g \in \LIP_{b}(N)$ and $h = ( h_1, \dots, h_k ) \in \LIP( N; \mathbb{R}^k )$ has $1$-Lipschitz components.
\end{enumerate}

\subsubsection{Mass}
When $k = 0$ or $k \geq 1$ and $T$ is a $k$-current, the \emph{mass measure} $\|T\|$ is the smallest measure satisfying \eqref{eq:0-currentmass} or \eqref{eq:finitemass:axiom}, respectively. We denote
\begin{equation*}
    T \in \mathbf{M}_k(N)
    \quad\text{and}\quad
    M( T ) \coloneqq \| T \|( N ),
\end{equation*}
where $M( T )$ is the \emph{mass} of $T$. We also define $\mathbf{M}_{-1}( N ) = \left\{0\right\}$ to ease later notations.

The space $\mathbf{M}_k( N )$ becomes a vector space when endowed with addition and scalar product defined pointwise. It becomes a Banach space when endowed with the norm $(T,S) \mapsto M( T-S )$.

\subsubsection{Boundary}
We define the \emph{boundary} $\partial T$ of a $k$-current $T$: When $k = 0$ or $k = 1$, respectively, we set $\partial T \equiv 0$ and
\begin{equation*}
    \partial T( g ) \coloneqq T( 1, g )
    \quad\text{for every $g \in \LIP_b( N )$.}
\end{equation*}
Finally, when $k \geq 2$,
\begin{equation*}
    \partial T( g, h ) \coloneqq T( 1, (g,h) )
    \quad\text{for every $(g,h) \in \LIP_b( N ) \times \LIP( N, \mathbb{R}^{k-1} )$.}
\end{equation*}

\subsubsection{Basic properties}
Any $T \in \mathbf{M}_k(  N ) $ can be uniquely extended to $\mathcal{L}^{1}( N; \|T\| ) \times \LIP( N, \mathbb{R}^{k} )$ satisfying the following properties:
\begin{enumerate}
    \item (continuity): $\lim_{ i \rightarrow \infty } T( g_i, h_i ) = T( g, h )$ whenever $g_i \in \mathcal{L}^{1}( N; \|T\| )$ converge to $g$ in $\mathcal{L}^{1}( N; \| T \| )$ and $h_{i} \rightarrow h$ pointwise with $\sup \LIP( h_i ) < \infty$.
    \item (strong locality): $T( g, h ) = 0$ if $\left\{ g \neq 0 \right\} = \bigcup_{ i = 1 }^{ \infty } B_i$ with Borel $B_i$ such that at least one of the components of $h$ vanishes at each $B_i$.
\end{enumerate}
See \cite[Theorem 3.5]{AK:00:current} for the proof.

Given a Borel set $E \subset N$, the \emph{restriction} $T\llcorner{E}$ of a current $T$ is defined as follows:
\begin{equation*}
    T\llcorner{E}( g,h ) = T( \chi_E g, h )
    \quad\text{for every $(g,h) \in \LIP_b( N ) \times \LIP( N, \mathbb{R}^{k} )$}.
\end{equation*}
Observe that the value $T\llcorner{E}( g, h )$ depends only on the values of $g$ and $h$ in the Borel set $E$, by the strong locality of $T$. That is, if $( g', h' ), (g,h) \in \mathcal{L}^{1}( N; \|T\| ) \times \LIP( N, \mathbb{R}^{k} )$ coincide on $E$, we have $T\llcorner{E}(g,h) = T\llcorner{E}(g',h')$.

\subsubsection{Pushforward}
Given a Lipschitz mapping $f \colon N \rightarrow Y$ and $T \in \mathbf{M}_{k}( N )$, the \emph{pushforward} $f_\sharp T$ is defined as follows:
\begin{equation*}
    f_\sharp T( g, h )
    \coloneqq
    T( g \circ f, h \circ f)
    \quad\text{for every $(g,h) \in \LIP_b( Y ) \times \LIP( Y, \mathbb{R}^{k} )$}.
\end{equation*}
We observe the following: When $f \colon N \to Y$ is Lipschitz for a complete metric space $Y$, we have
\begin{equation}\label{eq:masspushforward}
    \| f_\sharp T \|
    \leq
    f_\sharp( \lip(f)^k \|T\| )
    \quad\text{for every $T \in \mathbf{M}_{k}(N)$ and $k \geq 0$,}
\end{equation}
in the sense of measures.

For $k = 0$, \eqref{eq:masspushforward} is clear, so we assume $k \geq 1$. Inequality \eqref{eq:masspushforward} follows by observing that given $h = ( h_1, \dots, h_k ) \in \LIP( N, \mathbb{R}^k )$, we have
\begin{align*}
    | ( f_\sharp T )( g, h ) |
    &\leq
    \int_N ( |g| \circ f ) \left( \prod_{ i = 1 }^{ k } \lip( h_i \circ f) \right) \,d\| T \|
    \quad\text{and}
    \\
    \lip( h_i \circ f)
    &\leq
    \LIP( h_i )
    \lip( f )
    \quad\text{for each $i = 1,2,\dots k$}.  
\end{align*}
See \cite[Lemma 3.10, Eq. (3.5)]{Raj:Wen:13} for a proof of the first inequality; the proof applies the fact that the mass measure of $T$ is concentrated on its support. Notice the slight difference in notation between the cited Lemma and the first inequality above. The second inequality is elementary, so \eqref{eq:masspushforward} is immediate from the definition of mass.

\subsubsection{Subclasses}
When $k \geq 1$ and $( T, \partial T ) \in \mathbf{M}_k(N) \times \mathbf{M}_{k-1}(N)$, we say that $T$ is a \emph{normal $k$-current} and denote $T \in \mathbf{N}_{k}(N)$. Observe also that $\partial( \partial T ) \equiv 0$ for every $T \in \mathbf{N}_{k}(N)$ by the locality axiom of currents. When $\partial T = 0$, we say that $T$ is a \emph{$k$-cycle}. Also, $\mathbf{N}_{0}( N ) = \mathbf{M}_0( N )$, i.e. every $0$-current is a $0$-cycle and $\mathbf{N}_{-1}( N ) = \left\{0\right\}$.

We say that $T \in \mathbf{M}_{k}(N)$ is \emph{$k$-rectifiable} if $\| T \|$ is absolutely continuous with respect to $\mathcal{H}^{k}$ and there is a sequence of Borel sets $B_i \subset \mathbb{R}^k$ and bi-Lipschitz maps $f_i \colon B_i \rightarrow f( B_i ) \subset N$ for which
\begin{equation*}
    \| T \|( N \setminus \bigcup_{ i = 1 }^{ \infty } f_i( B_i ) )
    =
    0.
\end{equation*}
As an example, if $\theta \in L^{1}( \mathbb{R}^{k}; \mathcal{H}^k )$ and $k \geq 1$, then
\begin{equation*}
    \left[ \theta \right]( g, h )
    \coloneqq
    \int_{ \mathbb{R}^k }
        g(x)
        \mathrm{det}( dh )(x)
        \theta(x)
    \,d\mathcal{H}^{k},
    \quad\text{$(g,h) \in \LIP_b( \mathbb{R}^k ) \times \LIP( \mathbb{R}^k, \mathbb{R}^{k} )$,}
\end{equation*}
defines a $k$-rectifiable current. Here $\mathrm{det}( dh )$ is the usual \emph{determinant} of the differential $dh$.

A \emph{$k$-rectifiable} current is \emph{integer rectifiable} if for every Lipschitz $f \colon N \rightarrow \mathbb{R}^{k}$ and for every open set $A \subset N$, there exists an integer-valued $\theta \in \mathcal{L}^{1}( \mathbb{R}^{k}; \mathcal{H}^k )$ for which $f_\sharp( T\llcorner{A} )= \left[ \theta \right]$. We denote such currents by $\mathcal{I}_k( N )$, including $\mathcal{I}_{-1}( N ) = \left\{0\right\}$. We recall a representation result of integer-rectifiable currents in \Cref{sec:earlier}.

An \emph{integral $k$-current} refers to any $T \in \mathcal{I}_k( N ) \cap \mathbf{N}_{k}( N )$ with $\partial T \in \mathcal{I}_{k-1}( N )$, the collection of which we denote by $\mathbf{I}_k( N )$.

The \emph{support} of $T \in \mathbf{M}_{k}(N)$ is the support of $\|T\|$ as a measure. We say that $T$ is \emph{compactly supported}, and denote $T \in \mathbf{M}_{k,c}( N )$, when $\|T\|$ is compactly supported, i.e. concentrated on a compact set. We also denote $\mathbf{N}_{k,c}( N ) \coloneqq \mathbf{M}_{k,c}( N ) \cap \mathbf{N}_{k}(N)$ and $\mathbf{I}_{k,c}( N ) \coloneqq \mathbf{M}_{k,c}( N ) \cap \mathbf{I}_{k}(N)$.

\subsubsection{Extension to locally complete spaces}
We say that a metric space $N$ is \emph{locally complete} if every point has an open neighbourhood whose closure is complete. For such an $N$ and $k \geq 1$, we define $\mathbf{M}_{k,c}( N )$ as the collection of all $T \colon \LIP_{b}( N ) \times \LIP( N; \mathbb{R}^k ) \to \mathbb{R}$ satisfying axioms (1) to (4) as above, with the further assumption that some finite Borel measure $\mu$ satisfying \eqref{eq:finitemass:axiom} is concentrated on a compact set. When $k = 0$, the original definition of a $k$-current is modified by requiring that some measure satisfying \eqref{eq:0-currentmass} is concentrated on a compact set. The results and notation above extend to the subclasses of $\mathbf{M}_{k,c}( N )$.

\subsubsection{Exceptional collections}\label{sec:exceptional}
We define a notion of exceptional collections of normal currents. The following notion is in parts motivated by the modulus of surface families considered by Fuglede \cite{Fug:57}; see also \cite{Raj:Wen:13,Kan:Pry:22}.

Consider a locally complete metric space $M$ endowed with a locally finite Borel measure $\mu$. Given $p \in [1, \infty)$ and a Borel $\rho_0 \in \mathcal{L}^{p}_+( M; \mu )$, we say that
\begin{align*}
    \mathcal{F}( \rho_0 )
    \coloneqq
    \bigcup_{ k \in \left\{ 1, \dots, \lfloor p \rfloor \right\} }
    &\left\{ 
        T\in \mathbf{N}_{k,c}( M )
        \colon
        \int_{N} \rho_0^{ k } \,d\|T\|
        +
        \int_{N} \rho_{0}^{ k-1 } \,d\| \partial T \|
        =
        \infty
    \right\}
    \\
    \cup
    &\left\{ T \in \mathbf{N}_{0,c}( M ) \colon \int_{N} \rho_0^{0} \,d\|T\| = \infty \right\}
\end{align*}
is $p$-exceptional. A countable union of $p$-exceptional collections is contained in a $p$-exceptional collection and since every element of $\mathcal{F}$ is compactly supported, an equivalent definition of $p$-exceptionality is obtained if we only require that $\rho_0 \in \mathcal{L}^{p}_{loc}( M; \mu )$ and $\rho_0$ is a nonnegative Borel function. We consider the direct sum
\begin{equation*}
    \mathbf{N}_{\star,c}( \rho_0 )
    \coloneqq
    \bigoplus_{ k = 0 }^{ \lfloor p \rfloor }
    \mathbf{N}_{k,c}( M ) \setminus \mathcal{F}( \rho_0 )
\end{equation*}
of vector spaces over $\mathbb{R}$, endowed with the usual boundary operation.

We say that $\mathbf{N}_{\star, c}( \rho_0 )$ is a \emph{common refinement} of $\mathbf{N}_{ \star, c }( \rho_1 )$ and $\mathbf{N}_{ \star, c }( \rho_2 )$, for Borel $\rho_1, \rho_2 \in \mathcal{L}^{p}_+( M )$, if $\mathbf{N}_{\star, c}( \rho_0 ) \subset \mathbf{N}_{ \star, c }( \rho_1 ) \cap \mathbf{N}_{ \star, c }( \rho_2 )$.

When we consider integral currents, we simply replace $\mathbf{N}$ by $\mathbf{I}$ in the definitions of this subsection. Notice, however, that $\mathbf{I}_{\star,c}( \rho_0 )$ is a $\mathbb{Z}$-module instead of a vector space over $\mathbb{R}$.

\section{Currents and Sobolev maps}\label{sec:integral}
For the duration of this section, we consider a compact Riemannian $n$-manifold $M$, with or without boundary, $1 \leq p < \infty$ and an injective Banach space $\mathbb{V}$. We endow $M$ with the Hausdorff measure $\mathcal{H}^n$. We recall that $M$ is a $p$-PI space, see \cite[Section 6]{Hei:Kos:98}.

The goal of this section is to simplify the proof of \Cref{thm:pushforward} in \Cref{sec:mainresults:proof}. In particular, we establish some terminology and notation in this section and establish the existence of a \emph{precise} representative of any given Sobolev equivalence class taking values in an injective Banach space.

We recall the following consequence of the $p$-PI property.
\begin{lemma}\label{lemm:decomposition}
Let $M$ be a compact Riemannian manifold, with or without boundary. Then, for every injective Banach space $\mathbb{W}$ and every $H \in W^{1,p}( M, \mathbb{W} )$ for $1 \leq p < \infty$, there is a decomposition $( B_i, H_i )_{ i = 0 }^{ \infty }$ defined as follows: the collection~$\left\{ B_i \right\}_{ i = 0 }^{ \infty }$ is a Borel decomposition of $M$, $B_0$ has negligible measure, and, for every $i \geq 1$, $H_i$ is Lipschitz and an extension of $H|_{ B_i }$, with $B_i$ having positive measure.
\end{lemma}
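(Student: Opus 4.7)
The plan is to combine a Lusin-type Lipschitz approximation for $\mathbb{W}$-valued Sobolev maps with the McShane-type extension result (\Cref{lemm:mcshane}) to produce the required Borel decomposition. The key input is that a compact Riemannian manifold is a $p$-PI space, so the Poincaré inequality and a standard telescoping argument yield a Hajlasz-type pointwise estimate for $H$ once a suitable representative has been fixed.

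First, I would fix a Borel representative $\rho \in \mathcal{L}^{p}_{+}(M)$ of the $p$-minimal $p$-weak upper gradient $|dH|$. The $p$-Poincaré inequality (applied to the $\mathbb{W}$-valued map $H$, whose image is essentially separable since Sobolev-regular mappings have that property) together with telescoping along a shrinking chain of balls centred at Lebesgue points produces a Borel representative $\widehat{H}$ of $H$, an $\mathcal{H}^n$-negligible set $N_{0} \subset M$, and a constant $C = C(M, p)$ such that
\begin{equation*}
    |\widehat{H}(x) - \widehat{H}(y)|_{\mathbb{W}}
    \leq
    C\, d(x,y) \bigl( \mathcal{M}(\rho^{p})(x)^{1/p} + \mathcal{M}(\rho^{p})(y)^{1/p} \bigr)
\end{equation*}
for every $x, y \in M \setminus N_{0}$, where $\mathcal{M}$ denotes the Hardy--Littlewood maximal operator on $M$. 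The scalar version of this estimate is classical; the $\mathbb{W}$-valued version follows from the same argument applied to the Banach-valued averages $\widehat{H}_{B}$, or alternatively by composing with the isometric embedding $\iota \colon \mathbb{W} \to L^{\infty}(Z)$ supplied by \Cref{lem:inj:infty} and passing to the supremum over the real-valued coordinates.

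Next, for each $t \in \mathbb{N}$ define the Borel set
\begin{equation*}
    F_{t} \coloneqq \left\{ x \in M \setminus N_{0} \colon \mathcal{M}(\rho^{p})(x) \leq t^{p} \right\}.
\end{equation*}
Since $\rho^{p} \in \mathcal{L}^{1}(M)$, the weak-type $(1,1)$ inequality gives $\mathcal{H}^{n}\bigl( M \setminus \bigcup_{t \in \mathbb{N}} F_{t} \bigr) = 0$, and the displayed pointwise estimate shows that $\widehat{H}|_{F_{t}}$ is $2Ct$-Lipschitz into $\mathbb{W}$. Applying \Cref{lemm:mcshane} for each $t$ yields a Lipschitz extension $H_{(t)} \colon M \to \mathbb{W}$ of $\widehat{H}|_{F_{t}}$.

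The decomposition is then obtained by bookkeeping: set $\widetilde{B}_{1} \coloneqq F_{1}$, $\widetilde{B}_{t} \coloneqq F_{t} \setminus F_{t-1}$ for $t \geq 2$, and $\widetilde{B}_{0} \coloneqq M \setminus \bigcup_{t \geq 1} F_{t}$, absorb any measure-zero $\widetilde{B}_{t}$ with $t \geq 1$ into $\widetilde{B}_{0}$, and renumber the remaining $\widetilde{B}_{t}$ and their associated extensions $H_{(t)}$ as $(B_{i}, H_{i})_{i = 1}^{\infty}$. The main technical hurdle is establishing the Hajlasz-type pointwise inequality for $\mathbb{W}$-valued Sobolev maps with an exceptional set independent of the chosen "coordinate"; once that is in hand, the rest of the argument is routine, with \Cref{lemm:mcshane} playing the role that the scalar McShane extension plays in the real-valued version.
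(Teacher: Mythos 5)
Your proposal is correct and follows essentially the same route as the paper: obtain a Haj\l{}asz-type pointwise inequality for $H$, decompose $M$ into sets on which the resulting gradient is bounded (hence $H$ is Lipschitz there), and extend each restriction with \Cref{lemm:mcshane}. The only difference is cosmetic: the paper directly cites the existence of a Haj\l{}asz gradient in weak-$\mathcal{L}^p$ (via \cite[Theorem 8.1.49]{Hei:Kos:Sha:Ty:15}), whereas you rederive that fact from the Poincaré inequality and the Hardy--Littlewood maximal operator before doing the same level-set bookkeeping.
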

We refer to such a decomposition $( B_i, H_i )_{ i = 0 }^{ \infty }$ as a \emph{Lusin--Lipschitz decomposition} of $H$. There are no constraints on $H_0$. The proof is based on the existence of Haj\l{}asz gradients \cite{Haj:96} for Banach-valued Sobolev mappings.
\begin{proof}
It suffices to observe that every $H \in W^{1,p}( M, \mathbb{W} )$ has a \emph{Haj\l{}asz} gradient in weak-$\mathcal{L}^p(M)$; see, for example, \cite[Theorem 8.1.49]{Hei:Kos:Sha:Ty:15}. That is, there exist a nonnegative Borel $g$ in weak-$\mathcal{L}^{p}(M)$ and a negligible Borel set $N_0 \subset M$ such that
\begin{equation}\label{eq:Hajlaszgradient}
    | H(x) - H(y) | \leq d(x,y) ( g(x) + g(y) )
    \quad\text{for every $x, y \in M \setminus N_0$.}
\end{equation}
We find a Borel decomposition of $M \setminus N_0$ into a negligible set $C_0$ and a countable number of Borel sets $C_i \subset \left\{ g \leq \lambda_i \right\}$ for some $\lambda_i \in ( 0,\infty )$, with positive measure, for $i \in \left\{1,2,\dots\right\}$. We define the set $B_0 = C_0 \cup N_0$ and $B_i = C_i$ for $i \geq 1$.

Now $H|_{ B_i }$ is Lipschitz for every $i \geq 1$ by \eqref{eq:Hajlaszgradient}, so the claim follows by extending $H|_{ B_i }$ for $i \geq 1$ using \Cref{lemm:mcshane} for $L_i = \LIP( H|_{ B_i } )$.
\end{proof}

\begin{remark}\label{rem:smooth:to:metric}
The construction of the following subsection, \Cref{sec:preciserep}, can be carried out e.g. on bounded Lipschitz domains on Euclidean spaces or even bounded uniform domains in $p$-PI spaces. The key point is that the $M$ in question satisfies the conclusions of Lemmas \ref{lemm:energydensity} and \ref{lemm:decomposition}. These conclusions and the reference measure of $M$ being Radon are the only assumptions we need in \Cref{sec:preciserep}.

\Cref{sec:pushforward:prep} only uses the construction from \Cref{sec:preciserep} and $M$ being locally complete. We prefer to state the proof for compact Riemannian $M$ so that we may focus on the key ideas.

Nevertheless, the aforementioned setting is of interest to us in light of a recent exhaustion result of bounded domains in $p$-PI spaces by bounded uniform domains, see \cite{Raj:21}, and the exhaustion method used in the proof of \Cref{thm:pushforward}.
\end{remark}

\subsection{Precise representatives}\label{sec:preciserep}
We apply \Cref{lemm:energydensity} for $f \in W^{1,p}( M, N )$ for a closed set $N \subset \mathbb{V}$. We are given a sequence $( f_j )_{ j = 1 }^{ \infty }$ from $\LIP_{bs}( M, \mathbb{V} )$ satisfying
\begin{equation}\label{eq:fastconvergence}
    \sum_{ j = 1 }^{ \infty }
    \left(
    \| f - f_j \|_{ L^{p}( M, \mathbb{V} ) }
    +
    \| |df| - \lip( f_{j} ) \|_{ L^{p}( M ) }
    \right)
    <
    \infty.
\end{equation}
This subsection consists of three steps and two properties of interest which we apply in the subsequent subsection.

{\color{blue}Step (1)}: Constructing a representative $\widehat{f}$ of $f$ using $( f_j )_{ j = 1 }^{ \infty }$.

To this end, we consider the Borel function
\begin{equation*}\label{eq:fastconvergence:target}
    \rho_2
    \coloneqq
    |f_1|
    +
    \lip( f_1 )
    +
    \sum_{ j = 1 }^{ \infty } 
    | f_{j+1} - f_{j} |
    +
    | \lip( f_{j+1} ) - \lip( f_j
    ) |
    \in
    \mathcal{L}^{p}_+( M ).
\end{equation*}
Notice that the claimed integrability of $\rho_2$ follows from \eqref{eq:fastconvergence}. When $\rho_2(x) < \infty$, the sequences $( f_j(x) )_{ j = 1 }^{ \infty }$ and $( \lip( f_j )(x) )_{ j = 1 }^{ \infty }$ are Cauchy.

Let $E$ denote the collection of all $x \in M$ for which $( f_j(x) )_{ j = 1 }^{ \infty }$ is not a Cauchy sequence. We denote $\widehat{f}(x) = \lim_{ j \rightarrow \infty }f_j(x)$ in $x \in M \setminus E$ and $\widehat{f}(x) = x_0$ in $E$ for arbitrary $x_0 \in f_1( M )$. The function $\widehat{f}$ is a Borel representative of $f$ since $\| f_j - f \|_{ L^{p}( M, \mathbb{V} ) }$ converges to zero as $j \rightarrow \infty$. The following is immediate.

{\color{blue}Property (1)}: The images of $f_j$, $j \geq 1$, and $\widehat{f}$ lie in some closed and separable subspace $\mathbb{V}_0 \subset \mathbb{V}$.

We use {\color{blue}Property (1)} in the following subsection when applying Prokhorov's theorem.

{\color{blue}Step (2)}: We define $\widehat{\rho}(x) = \lim_{ j \rightarrow \infty } \lip( f_j )(x)$ whenever the limit exists and is finite and $\widehat{\rho}(x) = \infty$ otherwise. Then $\widehat{\rho} \leq \rho_2$ everywhere and $\widehat{\rho}$ is a Borel representative of $|df|$ and a $p$-integrable $p$-weak upper gradient of $\widehat{f}$. 

{\color{blue}Step (2)} is clear by Fuglede's lemma, cf. \cite[Fuglede's Lemma]{Hei:Kos:Sha:Ty:15}, as $\widehat{\rho}$ is the pointwise almost everywhere limit of the upper gradients $\lip( f_j )$ of $f_j$. To this end, we start with the Borel function
\begin{equation}\label{eq:fastconvergence:target:good}
    \rho_1 = \infty \cdot \chi_E + \rho_2.
\end{equation}
Observe that $\rho_1 \geq \widehat{\rho}$ everywhere. If $\gamma \colon [0,1] \to M$ is nonconstant, has constant metric speed and satisfies
\begin{equation}\label{eq:fastconvergence:target:good:curvewise}
    \int_\gamma \rho_1 \,ds < \infty,
\end{equation}
the absolutely continuous paths $f_i \circ \gamma$ form an equicontinuous sequence of functions converging pointwise to $f \circ \gamma$ outside the negligible set $\gamma^{-1}( E )$. Indeed, the equicontinuity follows from the fact that $\rho_1$ is an upper gradient of every $f_i$ for $i \geq 1$, while \eqref{eq:fastconvergence:target:good:curvewise} implies that $\mathcal{H}^{1}( \gamma^{-1}(E) ) = 0$. The equicontinuity of $f_i \circ \gamma$ and the density of $[0,1] \setminus \gamma^{-1}(E)$ in $[0,1]$ imply that $( f_i \circ \gamma(s) )_{ i = 1 }^{ \infty }$ is a Cauchy sequence for every $s \in [0,1]$. Thus $\gamma^{-1}(E) = \emptyset$. Having verified this, the fact that $\lip( f_m ) + \sum_{ j = m }^{ \infty } |\lip( f_{m+1} ) - \lip( f_m ) |$ is an upper gradient of every $f_j$ for $j \geq m$ and is bounded from above by $\rho_1$ implies
\begin{align*}
    | \widehat{f}( \gamma(1) ) - \widehat{f}( \gamma(0) ) |
    &\leq
    \lim_{ m \rightarrow \infty }
    \left(
    \int_\gamma \lip( f_m ) + \sum_{ j = m }^{ \infty } | \lip( f_{j+1} ) - \lip( f_j ) | \,ds
    \right)
    =
    \int_{ \gamma } \widehat{\rho} \,ds.
\end{align*}
Consequently, $\widehat{\rho}$ is a $p$-weak upper gradient of $\widehat{f}$. The remaining properties of $\widehat{\rho}$ are immediate from the defining property of $( f_j, \lip( f_j ) )_{ j = 1 }^{ \infty }$.

{\color{blue}Step (3)}: Constructing a Borel function $\rho_0 \in \mathcal{L}^{p}_+( M )$ defining the $p$-exceptional collection $\mathcal{F}( \rho_0 )$.

We show in the following subsection that the pushforward is well-defined in $\mathbf{N}_{\star,c}( \rho_0 )$.

The construction of $\rho_0$ is done in two pieces using $\rho_1$ from \eqref{eq:fastconvergence:target:good} and an auxiliary Banach space $\mathbb{W}$. We denote $f_0 \coloneqq \widehat{f}$ for convenience.

Let $\mathbb{W}$ denote the collection of all sequences $V \coloneqq ( v_i )_{ i = 0 }^{ \infty }$ such that $v_i \in \mathbb{V}$ and $\| V \| \coloneqq \sup_{ i \geq 0 } | v_i | < \infty$. When we endow $\mathbb{W}$ with componentwise multiplication and summation, $\mathbb{W}$ becomes a Banach space. Since $\mathbb{V}$ is injective, so is $\mathbb{W}$.

We consider the mapping $H = ( f_i )_{ i = 0 }^{ \infty } \colon M \to \mathbb{W}$ defined by
\begin{equation*}
    H(x)
    =
    \left\{
    \begin{split}    
        &( f_0(x) )_{ i = 0 }^{ \infty }
        \quad&&\text{when $x \in E$ and}
        \\
        &( f_0(x) )_{ i = 0 }^{ \infty }
        +
        \lim_{ m \rightarrow \infty }
        \lim_{ j \rightarrow \infty }
        ( f_i(x) - f_j(x) )_{ i = 0 }^{m}
        \quad&&\text{when $x \in M \setminus E$.}
    \end{split}
    \right.
\end{equation*}
Observe that $H$ is Borel measurable as a pointwise limit of Borel measurable mappings, see e.g. \cite[Pettis measurability theorem and Corollary 3.1.5]{Hei:Kos:Sha:Ty:15}. Furthermore, $H(x) = ( f_i(x) )_{ i = 0 }^{ \infty }$ for every $x \in M \setminus E$.

By arguing as in {\color{blue}Step (2)} for a nonconstant $\gamma$, either \eqref{eq:fastconvergence:target:good:curvewise} fails or $\gamma^{-1}( E ) = \emptyset$ and the triples $( f_i, \gamma, \rho_1 )$ satisfy the upper gradient inequality for every $i \geq 0$. Therefore $\rho_1$ is a $p$-integrable upper gradient of $H$. Since also $\|H\| \leq \rho_1$ everywhere, we have $H \in W^{1,p}( M, \mathbb{W} )$.

{\color{blue}Property (2)}: The mapping $H$ has a Lusin--Lipschitz decomposition $( B_i, H_i )_{ i = 0 }^{ \infty }$ with the following properties:
\begin{equation}\label{eq:badpoints:contained}
    \widehat{f}^{-1}( \mathbb{V} \setminus N )
    \cup
    \left\{ \rho_1 = \infty \right\}
    \subset B_0
    \quad
\end{equation}
and for $i \geq 1$, $B_i$ is compact and the Lipschitz $H_i = ( h_{i,j} )_{ j = 0 }^{ \infty }$ are such that $h_{i,0} = \lim_{ j \rightarrow \infty } h_{i,j}$ uniformly on $M$.

Towards establishing {\color{blue}{Property (2)}}, we fix an initial Lusin--Lipschitz decomposition~$( B_i, H_i )_{ i = 0 }^{ \infty }$ of $H$, obtained from \Cref{lemm:decomposition}. We first modify the decomposition $( B_i )_{ i = 0}^{ \infty }$.

The Borel set on the left-hand side of \eqref{eq:badpoints:contained} is $\mathcal{H}^{n}$-negligible. Then \eqref{eq:badpoints:contained} and the compactness of each $B_i$, for $i \geq 1$, can be required, up to enlarging $B_0$, by recalling that each Borel set is a pairwise disjoint union of a negligible Borel set and a sequence of compact sets. We assume from this point onwards that the $( B_i )_{ i = 0 }^{ \infty }$ have already been modified in the manner explained above and proceed with the 
  modification of $( H_i )_{ i = 0 }^{ \infty }$. Applying \Cref{lemm:mcshane} to each $f_j|_{ B_i }$ for $L_i = \LIP( H_i )$ and $i \geq 1$ shows the following:
\begin{equation*}
    \text{$\mathcal{E}( ( f_j )|_{ B_i } )$
    converge uniformly to
    $\mathcal{E}( ( f_0 )|_{ B_i } )$
    for every $i\geq 1$.}
\end{equation*}
Indeed, the pointwise convergence of the $\LIP( H_i )$-Lipschitz $f_{j}|_{ B_i }$ to $f_0|_{ B_i }$ improves to uniform convergence by the compactness of $B_i$, $i \geq 1$. Then the uniform convergence of the extensions follows from \Cref{lemm:mcshane}. {\color{blue}Property (2)} holds after replacing the original $H_i$ by $\left( \mathcal{E}( ( f_j )|_{ B_i } ) \right)_{ j = 0 }^{ \infty }$ for every $i \geq 1$.

Having fixed $( H_i, B_i )_{ i = 0 }^{ \infty }$ satisfying {\color{blue}Property (2)}, we define
\begin{equation}\label{eq:fastconvergence:target:good:last}
    \rho_0 = \infty \cdot \chi_{ B_0 } + \rho_1.
\end{equation}
{\color{blue}Step (3)} is complete. In the following subsection $\rho_0$ will be defined using \eqref{eq:fastconvergence:target:good:last}. Observe the following key properties:
\begin{enumerate}
    \item $( \widehat{f}( x ), \widehat{\rho}(x) )
    =
    \lim_{ j \rightarrow \infty } ( f_j(x), \lip( f_j )(x) )$
    when $\rho_0(x) < \infty$,
    \item $\widehat{f}^{-1}( \mathbb{V} \setminus N )
    \subset
    B_0
    \subset
    \left\{ \rho_0 = \infty \right\}$
    and
    \item
    $\sup_{ j \in \mathbb{N} }
    \max\left\{ 
        |f_j(x)|,
        \lip( f_j )(x),
        |\widehat{f}(x)|,
        \widehat{\rho}(x)
    \right\}
    \leq
    \rho_0(x)$
    for every $x \in M$.
\end{enumerate}

\begin{remark}\label{rem:Sobolev}
The construction above shows the following fact we find curious: Whenever $k \in \left\{1,2,\dots, \lfloor p \rfloor \right\}$ and $T \in \mathbf{N}_{k,c}( \rho_0 )$, we have that
\begin{equation*}
    ( \widehat{f}, \widehat{\rho} )
    =
    \lim_{ j \rightarrow \infty }
    ( f_j, \lip(f_j) )
    \quad\text{in}\quad
    \mathcal{L}^{k}( M, \mathbb{V}; \|T\| ) \times \mathcal{L}^{k}( M; \|T\| ).
\end{equation*}
In particular, when $k \geq 1$, Fuglede's Lemma implies that $\widehat{\rho}$ is a $k$-integrable $k$-weak upper gradient of $\widehat{f}$ when we endow $M$ with the measure $\|T\|$. As a consequence, we have $\widehat{f} \in W^{1,k}( M, N; \| T \| )$.
\end{remark}

\subsection{Construction of the pushforward}\label{sec:pushforward:prep}
We use the assumptions and notation from \Cref{sec:preciserep}.

{\color{blue}Definition:} When $k \in \left\{1, \dots, \lfloor p \rfloor \right\}$ and $T \in \mathbf{N}_{k,c}( \rho_0 )$, the pushforward $\widehat{f}_\star T$ is defined by the formula
\begin{equation*}
    \sum_{ i = 1 }^{ \infty }
    T( \chi_{ B_i } g \circ \widehat{f}, h \circ h_{i,0} ),
    \quad\text{$(g,h) \in \LIP_b(\mathbb{V}) \times \LIP( \mathbb{V}, \mathbb{R}^{k} )$,}
\end{equation*}
and not defined in $\mathbf{N}_{k}( M ) \cap \mathcal{F}( \rho_0 )$. For $k = 0$, we denote
\begin{equation*}
    \sum_{ i = 1 }^{ \infty }
    T( \chi_{ B_i } g \circ \widehat{f} ),
    \quad\text{$g \in \LIP_b(\mathbb{V})$,}
\end{equation*}
in $\mathbf{N}_{0,c}( \rho_0 )$ and do not define the pushforward otherwise. Recall the notations $\mathbb{V}_0$ and $H_i = ( h_{i,j} )_{ j = 0 }^{ \infty }$ from {\color{blue}Properties (1)} and {\color{blue}(2)}, respectively.

{\color{blue}Property (3)}: Whenever $T \in \mathbf{N}_{\star,c}( \rho_0 )$, we have $\widehat{f}_\star T \in \mathbf{N}_{\star}( \mathbb{V} )$ and the pushforward is a pointwise limit of $( f_j )_\sharp T$. We have $\widehat{f}_\star T \in \mathbf{I}_{\star}( \mathbb{V} )$ if $T \in \mathbf{I}_{\star,c}( \rho_0 )$. 

We proceed with the verification of {\color{blue}Property (3)} when $k \geq 1$, the argument requiring some care.

First, by 
\begin{equation*}
    \int_M \rho_0^{k} \,d\|T\| < \infty,
\end{equation*}
the measure $\| T \|$ is concentrated on $M \setminus B_0$, so the pushforward of the measure $\|T\|$ under $\widehat{f}$ or $f_j$, respectively, is concentrated on $\mathbb{V}_0$. Hence the pushforward measures are Radon, recall \Cref{lemm:separable}. The same argument holds when $( k, T )$ is replaced by $( k-1, \partial T )$.

We denote $\mu \coloneqq \widehat{f}_\sharp( \rho_0^k \|T\| )$ and $\nu \coloneqq \widehat{f}_\sharp( \rho_0^{k-1} \|\partial T\| )$. By dominated convergence in $\mathcal{L}^{1}( M; \|T\| )$ and {\color{blue}Step (1)}, $\mu$ is a weak limit of the finite Radon measures $( f_j )_\sharp( \rho_0^k \|T\| )$ on $\mathbb{V}_0$, the respective convergence holding for the boundary mass measures and $\nu$. Thus we may apply Prokhorov's theorem (see e.g. \cite[Theorem 8.6.4]{Bog:07:II}): there exists a sequence of nested compact sets $( K_r )_{ r = 1 }^{ \infty }$ in $\mathbb{V}_0$ for which
\begin{align}\label{eq:nonboundary:tightness}
    \sup_{ j }
       \left( (f_j)_{\sharp} \left( \rho^{k}_0 \| T \| \right) \right)( \mathbb{V} \setminus K_r )
        +
        \mu( \mathbb{V} \setminus K_r )
    &\leq
    2^{-r}
    \quad\text{and}
    \\\label{eq:boundary:tightness}
    \sup_{ j }
       \left( (f_j)_{\sharp} \left( \rho^{k-1}_0 \| \partial T \| \right) \right)( \mathbb{V} \setminus K_r )
        +
        \nu( \mathbb{V} \setminus K_r )
    &\leq
    2^{-r}.    
\end{align}
By applying \eqref{eq:nonboundary:tightness}, we observe from \eqref{eq:masspushforward} that
\begin{align}\label{eq:sizeofthepushforward}
    \| ( f_j )_\sharp T \|( \mathbb{V} \setminus K_r )
    &\leq
    \int_{ f_{j}^{-1}( \mathbb{V} \setminus K_r ) } \lip( f_j )^k \,d \|T\|
    \leq
    \int_{ f_{j}^{-1}( \mathbb{V} \setminus K_r ) } \rho^k_0 \,d \|T\|
    \\ \notag
    &=
    \left( (f_j)_{\sharp} \left( \rho^{k}_0 \| T \| \right) \right)( \mathbb{V} \setminus K_r )
    \leq
    2^{-r}.    
\end{align}
Similarly, \eqref{eq:boundary:tightness} yields
\begin{equation}\label{eq:sizeofthepushforward:boundary}
    \| ( f_j )_\sharp \partial T \|( \mathbb{V} \setminus K_r )
    \leq
    \left( ( f_j )_\sharp \left( \rho_0^{k-1} \|\partial T\| \right) \right)( \mathbb{V} \setminus K_r )
    \leq
    2^{-r}.
\end{equation}
Inequalities \eqref{eq:sizeofthepushforward} and \eqref{eq:sizeofthepushforward:boundary} imply that the sequence of currents $( ( f_j )_\sharp T )_{ j = 1 }^{ \infty }$ is compact, by \cite[Theorem 5.2]{AK:00:current}, and as such, its accumulation points are normal~$k$-currents supported on $\mathbb{V}_0$. In fact, we show that the sequence has a unique accumulation point given by $f_\star T$.

For each $j$ and Borel set $F \subset M$, we have
\begin{equation}\label{eq:ell1-boundedness}
    \left|
        T( \chi_{ F } ( g \circ f_j ), ( h \circ f_j ) )
    \right|
    \leq
    \| g \|_\infty \LIP^k(h) \int_{ F } \rho^{k}_0 \,d\| T \|
    \quad\text{by \eqref{eq:masspushforward}.}
\end{equation}
Inequality \eqref{eq:ell1-boundedness} and dominated convergence in $\mathcal{L}^{1}( M; \| T \| )$ and $\ell_1$, respectively, show
\begin{align}\label{eq:ell1-boundedness:sumwise}
    T( g \circ f_j, h \circ f_j )
    =
    \sum_{ i = 1 }^{ \infty }
    T( \chi_{ B_i } ( g \circ f_j ), ( h \circ f_j ) ).
\end{align}
By the strong locality of $T$, we may replace $h \circ f_j$ by $h \circ h_{i,j}$ in the series above, where $h_{i,j}$ is the $j$'th component of $H_i$ from {\color{blue}Property (2)}. By {\color{blue}Steps (1)} to {\color{blue}(3)} and, in particular, {\color{blue}Property (2)}, $h \circ h_{i,j}$ are uniformly Lipschitz and converge uniformly to $h \circ h_{i,0}$ for every $i \geq 1$. Furthermore, $\chi_{ B_i }( g \circ f_j )$ converge to $\chi_{ B_i }( g \circ \widehat{f} )$ in $\mathcal{L}^{1}( M; \|T\| )$ by dominated convergence. Consequently, by the continuity of currents,
\begin{align*}
    T( \chi_{ B_i } ( g \circ f_j ), ( h \circ f_j ) )
    =
    T( \chi_{ B_i } ( g \circ f_j ), ( h \circ h_{i,j} ) )
    \rightarrow
    T( \chi_{ B_i } ( g \circ \widehat{f} ), ( h \circ h_{i,0} ) ).
\end{align*}
This pointwise convergence, together with \eqref{eq:ell1-boundedness} and \eqref{eq:ell1-boundedness:sumwise}, and dominated convergence in $\ell_1$ imply
\begin{align*}
    \lim_{ j \rightarrow \infty } T( g \circ f_j, h \circ f_j )
    =
    \sum_{ i = 1 }^{ \infty }
    T( \chi_{ B_i } g \circ \widehat{f}, h \circ h_{i,0} ),
    \quad\text{$(g,h) \in \LIP_b(\mathbb{V}) \times \LIP( \mathbb{V}, \mathbb{R}^{k} )$.}
\end{align*}

Observe that if also $T \in \mathbf{I}_{k}( M )$, then $( f_n )_{\sharp}T \in \mathbf{I}_{k}( \mathbb{V} )$ (readily verified using \cite[Theorem 8.8 (ii)]{AK:00:current}). On the other hand, integral currents are closed under pointwise convergence, under the additional assumption of equiboundedness of mass of $( f_n )_{\sharp}T$ and $\partial ( f_n )_{\sharp}T$, cf. \cite[Theorem 8.5]{AK:00:current}. Thus $\widehat{f}_\star T \in \mathbf{I}_{k}( \mathbb{V} )$ whenever $T \in \mathbf{I}_{k,c}( \rho_0 )$ holds.  The remaining case $k = 0$ is similar as above but simpler.

{\color{blue}Property (4)} (mass-energy inequality): For every Borel set $F \subset \mathbb{V}$, we have
\begin{equation}\label{eq:energymass:inequality}
    \| \widehat{f}_\star T \|( F )
    \leq
    \widehat{f}_\star\left( \widehat{\rho}^k \|T\| \right)( F )
    \quad\text{and}\quad
    \| \widehat{f}_\star \partial T \|( F )
    \leq
    \widehat{f}_\star\left( \widehat{\rho}^{k-1} \|\partial T\| \right)( F ).
\end{equation}
Furthermore, the mass measure $\widehat{f}_\star T$ (resp. $\widehat{f}_\star \partial T$) is concentrated on the closed subset $N \cap \mathbb{V}_0$ of the closed and separable subspace $\mathbb{V}_0$ of $\mathbb{V}$.
 
We only prove the claim for $T$ since the claim for the boundary follows by minor modifications. The case $k = 0$ is also simpler so we assume $k \geq 1$.

Whenever $g \in \LIP_b( N )$, dominated convergence shows
\begin{equation*}
    ( |g| \circ f_j ) \lip( f_j )^k
    \rightarrow
    ( |g| \circ \widehat{f} ) \widehat{\rho}^k
    \quad\text{in $\mathcal{L}^{1}( M; \| T \| )$};
\end{equation*}
here we apply {\color{blue}Steps (1)} and {\color{blue}(2)}. Observe that for any $g \in \LIP_{b}( \mathbb{V} )$ and every $h \in \LIP( \mathbb{V}, \mathbb{R}^k )$ whose components are $1$-Lipschitz, {\color{blue}Property (3)} implies
\begin{align*}
    | \widehat{f}_\star T( g, h ) |
    \leq
    \lim_{ j \rightarrow \infty }
    \int_M | g \circ f_j | \lip( f_j )^k \,d\| T \|
    =
    \int_M ( |g| \circ \widehat{f} ) \widehat{\rho}^k \,d\| T \|.
\end{align*}
The definition of mass therefore shows
\begin{equation*}
    \| \widehat{f}_\star T \|
    \leq
    \widehat{f}_{ \star}( \widehat{\rho}^k \| T \| )
    \quad\text{in the sense of measures,}
\end{equation*}
as claimed. The observation that $\| \widehat{f}_\star T \|$ is concentrated on $N \cap \mathbb{V}_0$ is immediate from \eqref{eq:energymass:inequality}.

{\color{blue}Property (5)}: If $( E_i, f_i )_{ i = 0 }^{ \infty }$ is a Lusin--Lipschitz decomposition of $\widehat{f}$, then
\begin{equation*}
    \widehat{f}_\star T(g,h)
    =
    \sum_{ i = 1 }^{ \infty }
    T( \chi_{ E_i } ( g \circ f_i ), ( h \circ f_i ) )
\end{equation*}
outside a $p$-exceptional collection.

We verify the claim outside the collection $\mathcal{F}_0 \coloneqq \mathcal{F}( \max\left\{ \rho_0, \infty \cdot \chi_{ E_0 } \right\} )$. In case $T \in \mathbf{N}_{k,c}( \max\left\{ \rho_0, \infty \cdot \chi_{ E_0 } \right\}  )$, then $\| T \|( E_0 ) + \| \partial T \|( E_0 ) = 0$ and for any $( g, h ) \in \LIP_b( N ) \times \LIP( N, \mathbb{R}^k )$, we have
\begin{align*}
    T( \chi_{B_j}( g \circ \widehat{f} ), ( h \circ h_{0,j} ) )
    &=
    \sum_{ i = 1 }^{ \infty }
    T( \chi_{ E_{i} }\chi_{ B_j } ( g \circ \widehat{f} ), ( h \circ h_{0,j} ) )
    \\
    &=
    \sum_{ i = 1 }^{ \infty }
    T( \chi_{ E_{i} }\chi_{ B_j } ( g \circ \widehat{f} ), h \circ f_i ),
\end{align*}
where the first equality applies dominated convergence in $\mathcal{L}^{1}( M; \|T\| )$ and finite additivity of $T$, together with dominated convergence in $\ell^{1}$, while the second equality follows from the strong locality of $T$. We may sum over $j \geq 1$ and exchange the order of summations over $j$ and $i$ due to Fubini's theorem and absorb the sum over $j$ inside the last term. The steps are justified by \eqref{eq:ell1-boundedness}. {\color{blue}Property (5)} follows.

{\color{blue}Property (6)}: We claim that the pushforward is well-defined for every representative $\widehat{f}_1$ of $f$ with a $p$-minimal $p$-weak upper gradient $\widehat{\rho}_1$ outside a $p$-exceptional collection $\mathcal{F}_1$.

To this end, let $F$ denote any $\mathcal{H}^n$-negligible Borel set containing the points for which $\widehat{f}_1 \neq \widehat{f}$ or $\widehat{\rho}_1 \neq \widehat{\rho}$. We define $\mathcal{F}_1 = \mathcal{F}( \max\left\{ \rho_0, \infty \cdot \chi_{ F } \right\} )$, and deduce for every $T \in \mathbf{N}_{k,c}( \max\left\{ \rho_0, \infty \cdot \chi_{ F } \right\} )$ that $\widehat{f} = \widehat{f}_1$ and $\widehat{\rho} = \widehat{\rho}_1$ $( \|T\| + \|\partial T\| )$-almost everywhere. This guarantees
\begin{equation*}
     \widehat{f}_\sharp( \widehat{\rho}^k \| T \| )
    =
    ( \widehat{f}_1 )_\sharp( \widehat{\rho}^k_1 \| T \| )
    \quad\text{and}\quad
    \widehat{f}_\sharp( \widehat{\rho}^{k-1} \| \partial T \| )
    =
    ( \widehat{f}_1 )_\sharp( \widehat{\rho}^{k-1}_1 \| \partial T \| ).
\end{equation*}
With these equalities at hand, {\color{blue}Property (4)} justifies the definition $( \widehat{f}_1 )_\star \coloneqq \widehat{f}_\star$ for such $T$. In case we already had a definition for $( \widehat{f}_1 )_\star$ satisfying {\color{blue}Properties (4)} and {\color{blue}(5)} above, similar reasoning as in the proof of {\color{blue}Property (5)} shows that the two definitions coincide outside a $p$-exceptional collection. In this sense, the obtained pushforward is independent of the approximating sequence of Lipschitz mappings we started from. In particular, {\color{blue}Property (6)} follows.

\begin{remark}\label{rem:remove:technical}
When deriving the results of this section, the axiom regarding the nonexistence of non-Ulam cardinal numbers from \Cref{sec:technicalassumption} is not necessary. Indeed, \cite[Lemma 3.10, Eq. (3.5)]{Raj:Wen:13} is used in the proof of \eqref{eq:sizeofthepushforward} and \eqref{eq:sizeofthepushforward:boundary}, applied to $T \in \mathbf{N}_{k,c}(M)$ for locally complete and \emph{separable} $M$, so $\|T\|$ is concentrated on its support. In that setting the proof of \cite[Lemma 3.10, Eq. (3.5)]{Raj:Wen:13} goes through without the axiom.

Inequalities \eqref{eq:nonboundary:tightness} and \eqref{eq:boundary:tightness} imply that the mass measures of the normal currents $\widehat{f}_\star T$ and $( f_j )_{\star} T$ of interest and their boundaries are concentrated on the $\sigma$-compact set $\bigcup_{ r = 1 }^{ \infty }K_r$ in the closed and separable subspace $\mathbb{V}_0$ of $\mathbb{V}$; hence we may consider the currents defined only on $\mathbb{V}_0$ and apply \cite[Theorems 8.5 and 8.8 (ii)]{AK:00:current} only for separable Banach spaces. Thus the axiom is not needed in this case.
\end{remark}

\section{Pushforward and pullback}\label{sec:mainresults:proof}
In this section we prove \Cref{thm:pushforward:intro}. In fact, we prove the following stronger statement.

\begin{theorem}\label{thm:pushforward}
Suppose that $M$ is a Riemannian manifold, with or without boundary, $N$ is a closed subset of an injective Banach space $\mathbb{V}$, and $f \in W^{1,p}_{loc}( M, N )$ for $1 \leq p < \infty$.

Then there exist a Borel representative $( \widehat{f}, \widehat{\rho} )$ of $( f, |df| )$ and Borel $\widehat{\rho}_0 \in \mathcal{L}^{p}_+( M )$ together with a chain morphism
\begin{equation}\label{eq:pushforward}
    \widehat{f}_\star \colon ( \mathbf{N}_{\star, c}( \widehat{\rho}_0 ), \partial ) \to ( \mathbf{N}_{\star}( N ), \partial )
\end{equation}
with the following properties: there is a Lusin--Lipschitz decomposition of $(  \widehat{B}_i, \widehat{f}_i )_{ i = 0 }^{ \infty }$ of $\widehat{f}$ with
\begin{equation}\label{eq:lusinlipschitz}
    \widehat{f}^{-1}( \mathbb{V} \setminus N )
    \subset
    B_0
    \subset
    \left\{ \widehat{\rho}_0 = \infty \right\},
    \quad
    \widehat{f}_\star T
    =
    \sum_{ i = 1 }^{ \infty }
    ( \widehat{f}_i )_{\sharp}\left( T\llcorner{B_i} \right),
\end{equation}
and
\begin{equation}\label{eq:mass-energy}
    \| \widehat{f}_\star T \|( E )
    \leq
    \int_{ f^{-1}(E) } \widehat{\rho}^k \,d\|T\|
    <
    \infty
    \quad\text{for Borel $E \subset N$},
\end{equation}
whenever $T \in \mathbf{N}_{k,c}( \widehat{\rho}_0 )$ and $k \in \left\{ 0,1,\dots, \lfloor p \rfloor \right\}$. Furthermore, $\widehat{f}_\star$ restricts to a chain morphism
\begin{equation*}
    \widehat{f}_\star \colon ( \mathbf{I}_{\star, c}( \widehat{\rho}_0 ), \partial ) \to ( \mathbf{I}_{\star}( N ), \partial ).
\end{equation*}

If another tuple $( (\widetilde{f}, \widetilde{\rho}), \widetilde{\rho}_0, \widetilde{f}_\star, ( \widetilde{B}_i, \widetilde{f}_i )_{ i = 0 }^{\infty} )$ satisfies the conclusions \eqref{eq:pushforward}, \eqref{eq:lusinlipschitz} and \eqref{eq:mass-energy} in $\mathbf{N}_{\star,c}( \widetilde{\rho}_0 )$, the chain morphisms $\widehat{f}_\star$ and $\widetilde{f}_\star$ coincide on a common refinement of $( \mathbf{N}_{\star,c}( \widehat{\rho}_0 ), \partial )$ and $( \mathbf{N}_{\star,c}( \widetilde{\rho}_0 ), \partial )$.
\end{theorem}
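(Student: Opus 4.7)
The plan is to bootstrap the conclusions of Section \ref{sec:integral}, which already handle the compact Riemannian case, to an arbitrary Riemannian manifold $M$ (with or without boundary) by an exhaustion argument and then verify the chain morphism property and uniqueness using the strong locality of currents.

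First I would fix an isometric embedding $N \hookrightarrow \mathbb{V}$ (which is harmless since $N$ is already a closed subset of $\mathbb{V}$ by hypothesis) and choose a locally finite exhaustion $\overline{U_1} \subset U_2 \subset \overline{U_2} \subset U_3 \subset \cdots$ of $M$ by precompact open sets whose closures are compact Riemannian submanifolds with boundary. Applying Section \ref{sec:integral} to each $f|_{\overline{U_j}} \in W^{1,p}(\overline{U_j}, N)$ produces a Borel representative $(\widehat f_j, \widehat\rho_j)$, a Borel function $\rho_{0,j} \in \mathcal{L}^p_+(\overline U_j)$, a Lusin--Lipschitz decomposition $(B_i^{(j)}, H_i^{(j)})_{i=0}^\infty$, and a local pushforward $(\widehat f_j)_\star$ satisfying {\color{blue}Properties (1)--(6)}. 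Property (6) lets us inductively modify the $(\widehat f_j, \widehat\rho_j)$ so that they agree on $\overline{U_j} \cap \overline{U_{j-1}}$, yielding globally defined Borel representatives $(\widehat f, \widehat\rho)$ on $M$. I would then set $\widehat\rho_0 = \sum_j \chi_{U_j \setminus U_{j-1}} \max\{\rho_{0,j}, j\}$ (with $U_0 = \emptyset$), which is Borel and lies in $\mathcal{L}^p_+(M)$ after a suitable rescaling absorbed into $\rho_{0,j}$; the point is that any $T \in \mathbf{N}_{k,c}(\widehat\rho_0)$ has compact support contained in some $\overline{U_j}$, and the condition $\int \widehat\rho_0^k \,d\|T\| + \int \widehat\rho_0^{k-1} \,d\|\partial T\| < \infty$ forces $T \in \mathbf{N}_{k,c}(\rho_{0,j})$ in the local sense.

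With this in hand I define $\widehat f_\star T \coloneqq (\widehat f_j)_\star T$ for any such $j$, where well-definedness follows from Property (6) applied locally. The mass-energy inequality \eqref{eq:mass-energy} is then inherited from Property (4), and the representation \eqref{eq:lusinlipschitz} follows by concatenating the local Lusin--Lipschitz decompositions into a single global decomposition $(\widehat B_i, \widehat f_i)_{i=0}^\infty$ of $\widehat f$ (with $\widehat B_0 = \bigcup_j B_0^{(j)} \cup \widehat f^{-1}(\mathbb{V} \setminus N)$) and invoking Property (5). The chain morphism property $\partial \widehat f_\star T = \widehat f_\star \partial T$ is the crucial structural claim; I would prove it by noting that for each of the Lipschitz approximants $f_i$ on $\overline U_j$ one has the exact identity $\partial (f_i)_\sharp T = (f_i)_\sharp \partial T$, passing to the pointwise limit via Property (3) (which we know for \emph{both} $T$ and $\partial T$, since both lie outside the exceptional collection by definition of $\widehat\rho_0$), and invoking the definition of $\partial$ for Ambrosio--Kirchheim currents. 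That $\widehat f_\star$ sends $\mathbf{I}_{\star,c}(\widehat\rho_0)$ into $\mathbf{I}_\star(N)$ follows from Property (3) together with the closure of integral currents under pointwise limits with equibounded mass (\cite[Theorem 8.5]{AK:00:current}), using \eqref{eq:mass-energy} both for $T$ and for $\partial T$ to obtain the mass bounds.

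For uniqueness, suppose $(\widetilde f, \widetilde\rho, \widetilde\rho_0, \widetilde f_\star, (\widetilde B_i, \widetilde f_i)_{i=0}^\infty)$ is another admissible datum. The set $F$ where $\widehat f \neq \widetilde f$ or $\widehat \rho \neq \widetilde\rho$ is $\mathcal{H}^n$-negligible, so $\rho_0' \coloneqq \max\{\widehat\rho_0, \widetilde\rho_0, \infty \cdot \chi_F\}$ again lies in $\mathcal{L}^p_+(M)$ (after the usual truncation-and-sum procedure to control integrability) and $\mathbf{N}_{\star,c}(\rho_0')$ is a common refinement. For $T \in \mathbf{N}_{\star,c}(\rho_0')$ both decompositions in \eqref{eq:lusinlipschitz} converge pointwise in $\mathbf{N}_\star(N)$ to the respective pushforwards; on the other hand, merging the two Lusin--Lipschitz decompositions into a finer common one $(E_i, g_i)$ and applying Property (5) to \emph{each} pushforward shows that both agree with $\sum_i (g_i)_\sharp(T \llcorner E_i)$. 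The main obstacle, and the place where the most care is required, is making sure the exceptional collections are compatible under the exhaustion and the gluing of the Lusin--Lipschitz pieces across $\partial U_j$, so that the countable sum in \eqref{eq:lusinlipschitz} converges in mass and the locality-based argument for uniqueness applies globally rather than only within a single $\overline{U_j}$.
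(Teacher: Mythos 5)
Your overall plan mirrors the paper's: reduce to the compact case already handled in \Cref{sec:integral} via exhaustion by compact submanifolds (the paper uses sublevel sets of a proper exhaustion function), glue the local precise representatives via $W^{1,p}$ locality and negligibility of the discrepancy set, assemble a global $\widehat\rho_0$ from the local $\rho_{0,j}$, merge the Lusin--Lipschitz decompositions, and deduce uniqueness from {\color{blue}Property (5)} on a common refinement built by adjoining $\infty\cdot\chi_F$ for the negligible discrepancy set $F$. This is the same route.

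There is, however, a genuine defect in your formula
$\widehat\rho_0 = \sum_j \chi_{U_j\setminus U_{j-1}} \max\{\rho_{0,j},j\}$. You then claim that $T\in\mathbf N_{k,c}(\widehat\rho_0)$ with support in $\overline{U_j}$ forces $T\in\mathbf N_{k,c}(\rho_{0,j})$, but this implication does not follow from your formula: on $\overline{U_j}$ the sum equals $\max\{\rho_{0,j'},j'\}$ on each annulus $U_{j'}\setminus U_{j'-1}$ for $j'\le j$, and there is no reason the \emph{different} local gradients $\rho_{0,j'}$ for $j'<j$ control $\rho_{0,j}$ on the inner annuli. So finiteness of $\int \widehat\rho_0^k\,d\|T\|$ need not give $\int \rho_{0,j}^k\,d\|T\|<\infty$, and the pushforward $(\widehat f_j)_\star T$ may be undefined. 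The paper avoids this by first replacing each $\rho_{0,m}$ with the cumulative function $\infty\cdot\chi_{B_0}+\sup_{j\le m}\rho_{0,j}$ (extended by zero), and then taking $\widehat\rho_0 = \sum_m 2^{-m}\rho_{0,m}/(1+\|\rho_{0,m}\|_{\mathcal L^p(M)})$; this way $\widehat\rho_0$ dominates a positive multiple of each cumulative $\rho_{0,m}$ globally, so the implication you need actually holds. Also, the constant term $j$ inside your $\max$ typically destroys $\mathcal L^p$-integrability of $\widehat\rho_0$ unless $\mathcal H^n(U_j\setminus U_{j-1})$ decays very fast, and it buys nothing since elements of $\mathbf N_{k,c}$ are already compactly supported. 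A minor secondary point: {\color{blue}Property (6)} concerns well-definedness of the pushforward across representatives, not a tool for modifying the representatives themselves; the paper instead directly builds $\widehat f$ from annular restrictions $\widehat f_m|_{M_{m-2}\setminus M_{m-3}}$ and verifies that the discrepancy set has negligible Sobolev $p$-capacity, which is the cleaner route to making $\widehat\rho$ a genuine $p$-weak upper gradient of the glued $\widehat f$.
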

\Cref{thm:pushforward:intro} is an immediate consequence of \Cref{thm:pushforward} above. The series representation in \eqref{eq:lusinlipschitz} should be understood as follows:
\begin{equation*}
    \widehat{f}_\star T( g, h )
    =
    \lim_{ N \rightarrow \infty }
    \sum_{ i = 1 }^{ N } ( \widehat{f}_i )_\sharp( T\llcorner{B_i} )(g,h),
    \quad\text{$(g,h) \in \LIP_{b}( \mathbb{V} ) \times \LIP( \mathbb{V}, \mathbb{R}^k )$,}
\end{equation*}
and that $\sum_{ i = 1 }^{ \infty } |( \widehat{f}_i )_\sharp( T\llcorner{B_i} )(g,h)| < \infty$. Notice that the summation starts from $1$ instead of $0$. That is, the map $\widehat{f}_0$ plays no role in the claim.

\begin{proof}[Proof of \Cref{thm:pushforward}]
The proof has two parts.

{\color{blue}Case (1)}: $M$ is compact.

We may apply the results from \Cref{sec:integral} somewhat directly. Given the technicality of the statement, let us provide the details.

First, \Cref{lemm:energydensity} shows that $f$ satisfies the assumptions of \Cref{sec:preciserep}. Let $\widehat{f}$ and $\widehat{\rho}$ denote the precise representatives of $f$ and its $p$-minimal $p$-weak upper gradient, respectively, constructed in {\color{blue}Steps (1)} to {\color{blue}(3)}. The pushforward is now defined as in \Cref{sec:pushforward:prep}.

Given $\rho_0 \in \mathcal{L}^{p}_+( M )$ as in \eqref{eq:fastconvergence:target:good:last}, we obtain the $p$-exceptional collection $\mathcal{F}( \rho_0 )$. Observe that $\mathbf{N}_{k,c}( \rho_0 )$ is closed under summation and scaling, and
\begin{equation*}
    \partial( \mathbf{N}_{k,c}( \rho_0 ) )
    \subset
    \mathbf{N}_{k-1,c}( \rho_0 ),
    \quad\text{for every $k \in \left\{0,1,\dots, \lfloor p \rfloor\right\}$}.
\end{equation*}
The boundary property also holds when $\mathbf{N}_{\star,c}$ is replaced by $\mathbf{I}_{\star,c}$. Observe, however, that $\mathbf{I}_{k,c}( \rho_0 )$ is (only) a $\mathbb{Z}$-module.

We recall from {\color{blue}Property (4)} that the pushforward current is concentrated (and therefore supported) on a separable and closed subset of $N$, in particular, in $N$. Any pair of pushforwards induced by representatives of $f$ satisfying \eqref{eq:lusinlipschitz} and \eqref{eq:mass-energy} coincide outside a common refinement, as a consequence of {\color{blue}Properties (5)} and {\color{blue}(6)}. {\color{blue}Case (1)} follows.

{\color{blue} Case (2)}: $M$ is not compact.

Fix a smooth function $r \colon M \rightarrow \mathbb{R}$ such that $r^{-1}( (-\infty, t] )$ is compact for every $t$, see e.g. \cite[Proposition 2.28]{Lee:21}. Fix a strictly increasing sequence $( t_m )_{ m = 1 }^{ \infty }$ of positive numbers such that $t_m$ is not a critical value of $r$ or of $r|_{ \partial M }$ and $t_m \rightarrow \infty$. Then $M_m \coloneqq r^{-1}( (-\infty, t_m] )$ form a sequence of compact Riemannian $n$-submanifolds of $M$ exhausting $M$.

We apply the compact case, namely {\color{blue}Case (1)}, to each of $M_m$ and $f|_{ M_m }$, and obtain $\rho_{0,m} \in \mathcal{L}^{p}( M_m )$ and a Lusin--Lipschitz decomposition $( \widehat{B}_{i,m}, \widehat{f}_{i,m} )_{ i = 0 }^{ \infty }$ of a representative $\widehat{f}_m$ of $f|_{ M_m }$. We also find a representative $\widehat{\rho}_m \in \mathcal{L}^p( M_m )$ of the $p$-minimal $p$-weak upper gradient of $\widehat{f}_m$ satisfying the conclusion \eqref{eq:mass-energy}.

We apply the locality of the $W^{1,p}$-spaces, cf. \cite[Corollary 3.9]{Wil:12}, to observe that
\begin{equation*}
    \bigcup_{ m = 3 }^{ \infty }
    \bigcup_{ j = m-1 }^{ \infty }
    \left\{ x \in M_{ m-2 } \colon ( \widehat{f}_m, \widehat{\rho}_m )(x) \neq ( \widehat{f}_j, \widehat{\rho}_j )(x) \right\}
\end{equation*}
has $\mathcal{H}^{n}$-negligible measure, so the set is contained in a $\mathcal{H}^{n}$-negligible Borel set $E_0$.

Now we define $\widehat{f}|_{ M_{1} } \coloneqq \widehat{f}_{ 3 }|_{ M_1 }$ and $\widehat{f}|_{ M_{m-2} \setminus M_{m-3} } \coloneqq \widehat{f}_{ m }|_{ M_{m-2} \setminus M_{m-3} }$ for every $m \geq 4$. We define $\widehat{\rho}$ similarly using $\widehat{\rho}_m$. The map $\widehat{f}$ has a locally $p$-integrable $p$-weak upper gradient $\widehat{\rho}$ since $$C_0 \coloneqq \bigcup_{ m = 3 }^{ \infty } \bigcup_{ j = m-1 }^{ \infty } \left\{ x \in M_{m-2} \colon \widehat{f}_{m}(x) \neq \widehat{f}_{j}(x) \right\}$$ has negligible Sobolev $p$-capacity, cf. \cite[Proposition 7.1.31, Corollary 7.2.1]{Hei:Kos:Sha:Ty:15}.

We construct a Lusin--Lipschitz decomposition for $\widehat{f}$. To this end, we denote $B_0 = E_0 \cup \bigcup_{ m = 1 }^{ \infty } \widehat{B}_{0,m}$ and fix a bijection $\phi \colon \mathbb{N} \rightarrow \mathbb{N}^2$ and set for each $j \geq 1$,
\begin{equation*}
    ( B_j, a_j )
    \coloneqq
    \left( \widehat{B}_{ \phi(j) } \setminus \bigcup_{ l = 1 }^{ j -1 } B_l, \widehat{f}_{ \phi(j) } ) \right).
\end{equation*}
We apply \Cref{lemm:mcshane} for each $a_j|_{ B_j } = \widehat{f}_{ \phi(j) }|_{ B_j } = \widehat{f}|_{ B_j }$ for the Lipschitz constant $L = \LIP( \widehat{f}|_{ B_j } )$ and obtain a Lipschitz extension $g_j \colon M \rightarrow \mathbb{V}$ of $\widehat{f}|_{ B_j }$ whose supremum norm is the same as $\widehat{f}|_{ B_j }$. In particular, $\| a_j \|_\infty \geq \| g_j \|_{\infty}$. Lastly, we fix an arbitrary Lipschitz function $g_0 \colon M \rightarrow \mathbb{V}$. Now $( B_j, g_j )_{ j = 0 }^{ \infty }$ is a Lusin--Lipschitz decomposition of $\widehat{f}$.

For every $m \in \mathbb{N}$, we identify $\rho_{0,m}$ with its zero extension to $M$ and consider the function $\infty \cdot \chi_{ B_0 } + \sup_{ j \leq m } \rho_{0,j}$ in place of $\rho_{0,m}$. We use the same notation $\rho_{0,m}$ for the new function for every $m \in \mathbb{N}$.

Next, we set
\begin{equation*}
    \widehat{\rho}_0
    \coloneqq
    \sum_{ m = 3 }^{ \infty }
    2^{-m}\frac{ \rho_{0,m} }{ 1 + \|\rho_{0,m}\|_{ \mathcal{L}^p( M ) } }.
\end{equation*}
The function is Borel and satisfies $\widehat{\rho}_0 \in \mathcal{L}^{p}_+( M )$. Moreover, $\mathbf{N}_{\star,c}( \widehat{\rho}_0 )$ is a common refinement of any pair $\mathbf{N}_{\star,c}( \rho_{0,m} )$ and $\mathbf{N}_{\star, c}( \rho_{0,j} )$ for $m, j\geq 4$.

Given any $T \in \mathbf{N}_{\star,c}( \widehat{\rho}_0 )$, there is $m \geq 4$ such that $T$ is supported in $M_{m-2}$. By the definition of $B_0 \supset E_0$ and as $\infty = \rho_{0,m}$ on $B_0$, we have that $\widehat{f}_{m} = \widehat{f}_{j}$ and $\widehat{\rho}_m = \widehat{\rho}_{j}$ outside the $( \|T\| + \| \partial T \| )$-negligible set $B_0$ for every $j \geq m-1$. Then (the proof of) {\color{blue}Property (5)} guarantees that $( \widehat{f}_m )_\star T$ and $( \widehat{f}_{j} )_\star T$ coincide when $j \geq m-1$. Therefore, we may define
\begin{equation*}
    \widehat{f}_\star T
    =
    \lim_{ m \rightarrow \infty }
    ( \widehat{f}_m )_\star T
    \quad\text{for every $T \in \mathbf{N}_{\star,c}( \widehat{\rho}_0 )$.}
\end{equation*}
Since the Lusin--Lipschitz property \eqref{eq:lusinlipschitz} holds for every $\widehat{f}_m$, the strong locality of currents and the construction of the Lusin--Lipschitz decomposition $( B_j, g_j )_{ j = 0 }^{ \infty }$ imply that $\widehat{f}$ satisfies \eqref{eq:lusinlipschitz}. Lastly, \eqref{eq:mass-energy} follows for $\widehat{\rho}$ since the corresponding property holds for each $m \in \mathbb{N}$. The claim follows.
\end{proof}

\begin{proof}[Proof of \Cref{thm:pullback}]
We continue with the notation from the proof of \Cref{thm:pushforward}. More specifically, from {\color{blue}Case (2)}. This defines the $p$-exceptional collection $\mathcal{F}( \widehat{\rho}_0 )$ and the representative $( \widehat{f}, \widehat{\rho} )$ of $( f, |df| )$.

Next, given $\Omega \in \mathcal{L}_{\infty}( \mathbf{I}_k(N) )$ with upper norm $\rho$, we have
\begin{equation*}
    | \Omega( \widehat{f}_\star T ) |
    \leq
    \int_{ N } \rho \,d\| \widehat{f}_\star T \|
    \leq
    \int_{ M } ( \rho \circ \widehat{f} )\widehat{\rho}^k \,d\|T\|,
    \quad T \in \mathbf{I}_{\star,c}( \widehat{\rho}_0 ),
\end{equation*}
where the latter inequality is immediate from the mass-energy inequality \eqref{eq:mass-energy}. The claim follows.
\end{proof}

\section{Representing integer-rectifiable currents}\label{sec:earlier}
 
\subsection{Vectors and covectors}
Let $\mathbb{V}$ be a Banach space. For each $k = 0,1\dots$, we let $\bigwedge_n \mathbb{V} $ denote the collection of all $n$-vectors. We say that $\tau \in \bigwedge_k \mathbb{V} $ is \emph{simple} if $\tau = v_1 \wedge \dots \wedge v_n$ for some $v_1, \dots, v_n \in \mathbb{V}$. When $\tau$ is simple, $\mathrm{Span}( \tau )$ is the linear subspace span by the vectors $v_1, \dots, v_n$. For $n \in \mathbb{N}$, the set $\bigwedge^n \mathrm{Span}( \tau )$ consists of all $n$-covectors on the subspace $\mathrm{Span}( \tau )$.

We say that $\tau = v_1 \wedge \dots \wedge v_n$ has \emph{unit norm} if $v_1, \dots, v_n \in \mathbb{V}$ can be chosen in such a way that the linear map
\begin{equation*}
    L \colon \mathbb{R}^n \to \mathbb{V},
    \quad\text{where }
    L( x )
    =
    \sum_{ i = 1 }^{ n } x_i v_i
\end{equation*}
has unit Jacobian. That is, we have
\begin{equation*}
    \mathcal{H}^n( L( E ) )
    =
    \mathcal{H}^n( E )
    \quad\text{for every Borel set $E \subset \mathbb{R}^n$;}
\end{equation*}
we refer the reader to \cite[Section 9]{AK:00:current} for well-posedness.

Given a Borel set $E \subset \mathbb{V}$, a map $\tau \colon E \rightarrow \bigwedge_{n} \mathbb{V}$ is \emph{measurable} if $\tau(x) = \tau_1( x ) \wedge \dots \wedge \tau_n(x)$ for some Borel maps $\tau_i \colon E \rightarrow \mathbb{V}$. In this case, we say that $\tau$ is a \emph{measurable section} of $\bigwedge_n \mathbb{V}$.

\subsection{Integer-rectifiable currents}
We consider a closed subset $N$ of an injective Banach space $\mathbb{V}$ and $T \in \mathbf{I}_n(N)$. Since $T$ is concentrated on its (separable) support, we might as well assume that $N$ is separable. Hence we may isometrically embed $N$ into $\ell^{\infty}$ leading to the simplification $\mathbb{V} = \ell^{\infty}$. The assumption $\mathbb{V} = \ell^{\infty}$ allows us to apply the results from \cite[Section 9]{AK:00:current}.

We recall from \cite[Theorem 9.1]{AK:00:current} that every $T \in \mathbf{I}_{n}(N)$ can be represented as follows: there is an $n$-rectifiable Borel set $E_T \subset N$, an integer-valued Borel function $\theta_T \colon E_T \to (0, \infty)$, and a measurable section of unit simple $n$-vectors $\tau_T \colon E_T \to \bigwedge_k\mathbb{V}$ for which the subspace $\mathrm{Span}( \tau_T(x) )$ is the \emph{approximate tangent space} of $E_T$ at $\mathcal{H}^{n}$-almost every $x \in E_T$, and
\begin{equation}\label{eq:representation:current}
    T( g, h )
    =
    \int_{ E_T  }
        g(x)
        \langle \wedge_n d^{E_T}h, \tau_T \rangle(x)
        \theta_T(x)
    \,d\mathcal{H}^n(x).
\end{equation}
Here $\wedge_n d^{E_T}h(x) \colon \bigwedge_n( \mathrm{Span}( \tau_T(x) ) ) \to \bigwedge_n( \mathbb{R}^n ) \sim \mathbb{R}$ is the $n$-covector induced by the \emph{tangential differential $d^{E_T} h$} of $h$ on $\mathrm{Span}( \tau_T(x) )$, and $\langle \cdot, \cdot \rangle$ is the standard duality pairing of $n$-vectors and $n$-covectors on $\mathrm{Span}( \tau_T(x) )$. The sets and functions are unique up to a $\mathcal{H}^n$-negligible set \cite[Theorem 9.1]{AK:00:current}.

The mass measure $\|T\|$ admits the representation
\begin{equation}\label{eq:massmeasure}
    \| T \|( E )
    \coloneqq
    \int_{ E_T \cap E}
        \theta_T(x)
        \lambda_{T}( x )
        \,d\mathcal{H}^n(x)
    \quad\text{for Borel $E \subset N$,}
\end{equation}
where $\lambda_T( x )$ is a factor depending only on the span of $\tau_T(x)$. In fact, we have
\begin{equation}
    \lambda_{T}( x )
    =
    \frac{ 2^n }{ \omega_n }
    \sup
    \left\{
        \frac{ \mathcal{H}^{n}( B_1 ) }{ \mathcal{H}^{n}( R ) }
        \colon
        \mathrm{Span}( \tau_T(x) )
        \supset
        R
        \supset
        B_1
    \right\},
\end{equation}
where $B_1$ is the unit ball in $\mathrm{Span}( \tau(x) )$ and $R$ is any linear image of the $n$-dimensional cube $[-1,1]^n$ containing the unit ball $B_1$. See \cite[Theorem 9.5]{AK:00:current} for the proof of these facts.

When the norm in $\mathrm{Span}( \tau_T(x) )$ is induced by an inner product, we have $\lambda_T( x ) = 1$ and, more generally, we have the bounds $n^{-n/2} \leq \lambda_{T}(x) \leq 2^{n}/\omega_n$, see \cite[Lemma 9.2]{AK:00:current}. In particular, when $N$ is a subset of a Riemannian manifold, we have $\lambda_{T} \equiv 1$ for all integer-rectifiable $n$-currents supported on $N$.

\subsection{Cochains from differential forms}
Using \eqref{eq:representation:current} and \eqref{eq:massmeasure} together with the definition of comass from \eqref{eq:comass}, we see that on a Riemannian manifold $N$ the comass of any bounded differential form defines an upper norm for the cochain $\Omega$ induced by the differential form $\omega$. To be precise, let $\omega$ be a bounded and smooth differential $n$-form on a complete Riemannian manifold $N$ for $n \geq 1$, and consider
\begin{equation}\label{eq:actiondefinition}
    \Omega_{\omega}( T )
    \coloneqq
    \int_{ E_T } \langle \omega, \tau_T \rangle(x) \theta_T(x) \,d\mathcal{H}^n(x)
    \quad\text{for $T \in \mathcal{I}_{k}(N)$,}
\end{equation}
with the understanding that $N$ is isometrically embedded into $\ell^{\infty}$. Then
\begin{equation*}
    | \Omega_\omega(T) |
    \leq
    \int_{ E_T } \|\omega\| \,d\|T\|
    \quad\text{for the comass of $\omega$.}
\end{equation*}

\begin{lemma}\label{lemm:local:to:global}
Let $N$ be a complete Riemannian manifold, with or without boundary, and suppose that an $n$-form $\omega$ and its exterior derivative $d\omega$ are smooth and bounded, with $n \geq 1$. Then
\begin{equation*}
    \Omega_{ \omega }( \partial S )
    =
    \Omega_{ d\omega }( S )
    \quad\text{for every $S \in \mathbf{I}_{n+1}(N)$.}
\end{equation*}
In particular, $\delta( \Omega_{\omega}|_{ \mathbf{I}_{n}(N) } ) = \Omega_{ d\omega }|_{ \mathbf{I}_{n+1}(N) }$.
\end{lemma}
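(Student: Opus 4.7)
The lemma is an integrated Stokes' identity for integer-rectifiable currents paired against smooth differential forms on a Riemannian manifold. My plan is to reduce it to the classical Euclidean Stokes' theorem via a partition of unity on $N$ and the equivalence between Ambrosio--Kirchheim and Federer--Fleming integral currents in $\mathbb{R}^m$, as recorded in \cite[Theorem 11.1]{AK:00:current}.

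First I would fix a locally finite smooth atlas $\{(U_i, \phi_i)\}$ of $N$ with bi-Lipschitz coordinate maps $\phi_i \colon U_i \to V_i \subset \mathbb{R}^{\dim N}$, together with a subordinate smooth partition of unity $\{\eta_i\}$. Writing $\omega = \sum_i \eta_i \omega$ and $d\omega = \sum_i d(\eta_i \omega)$ (both locally finite sums) and applying dominated convergence---using the bounded comass of $\omega$ and $d\omega$ and the finiteness of $\|S\|$ and $\|\partial S\|$ on their $\sigma$-compact supports---the problem reduces to proving the identity $\Omega_\alpha(\partial S) = \Omega_{d\alpha}(S)$ for each smooth $n$-form $\alpha$ with compact support in a single chart $U$. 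Given such $\alpha$, I would extend $\phi$, restricted to a compact neighborhood of $\mathrm{supp}(\alpha)$ in $U$, to a globally Lipschitz map $\tilde\phi \colon N \to \mathbb{R}^m$ by coordinatewise McShane extension, and let $\beta$ denote the unique smooth form compactly supported in $V$ with $\tilde\phi^{\star}\beta = \alpha$ in a neighborhood of $\mathrm{supp}(\alpha)$. Then $T \coloneqq \tilde\phi_{\sharp} S$ lies in $\mathbf{I}_{n+1}(\mathbb{R}^m)$ with $\partial T = \tilde\phi_{\sharp}(\partial S)$, and by \cite[Theorem 11.1]{AK:00:current} coincides with a Federer--Fleming integral current, to which classical Stokes' theorem applies and yields $\Omega_{d\beta}(T) = \Omega_{\beta}(\partial T)$. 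Translating back via the naturality identity $\Omega_\beta(\tilde\phi_{\sharp} R) = \Omega_{\tilde\phi^{\star}\beta}(R)$ for smooth $\beta$ supported in $V$ and $R \in \mathbf{I}_k(N)$---a consequence of the representation \eqref{eq:representation:current} together with the area formula for Lipschitz maps---delivers the desired identity for $\alpha$ and $S$, and therefore for $\omega$ and $S$.

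The main obstacle I anticipate is the bookkeeping in the naturality step: I must check that, on the support of the pushed-forward currents, the pointwise agreement $\tilde\phi^{\star}\beta = \alpha$ transports the comass-integral representation correctly through the tangential differential on rectifiable sets, and that $\tilde\phi_{\sharp}$ commutes with $\partial$ on the relevant currents. These are standard in the Ambrosio--Kirchheim framework but require careful handling because $\tilde\phi$ is only Lipschitz and $\alpha$ need not extend to all of $N$. Once the naturality identity and classical Stokes are in hand, the rest of the argument is structural, assembled by summation over $i$.
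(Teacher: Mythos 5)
Your route is genuinely different from the paper's and, as sketched, leaves a gap. The paper never leaves $N$ or invokes the Federer--Fleming equivalence: after the same reduction (cut off to compact support via an exhaustion, then finite additivity and a chart down to a monomial $\tau = \alpha\,\phi^{\star}(dx^{I})$), it identifies $\Omega_{d\tau}(S)$ with $S(1,(\alpha,\phi_{i_1},\dots,\phi_{i_n}))$ and $\Omega_{\tau}(\partial S)$ with $\partial S(\alpha,(\phi_{i_1},\dots,\phi_{i_n}))$ directly from the representation formula \eqref{eq:actiondefinition}, at which point the claim is literally the Ambrosio--Kirchheim definition $\partial S(g,h)=S(1,(g,h))$. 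The Stokes-type content is already built into the axioms of metric currents, so no detour through $\mathbb{R}^m$ or \cite[Theorem 11.1]{AK:00:current} is needed; your version costs you a naturality identity for Lipschitz pushforwards that must itself be proved, and buys nothing the direct evaluation doesn't give.

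The gap is in that naturality step, and it is more than bookkeeping. You extend $\phi$ from a compact neighborhood $K'$ of $\mathrm{supp}(\alpha)$ to a globally Lipschitz $\tilde\phi \colon N \to \mathbb{R}^m$ by McShane and pick $\beta$ with $\tilde\phi^{\star}\beta=\alpha$ near $\mathrm{supp}(\alpha)$. But McShane gives no control over $\tilde\phi(N\setminus K')$, so $\tilde\phi$ may send points of $\mathrm{supp}(S)\cup\mathrm{supp}(\partial S)$ lying outside the chart into $\mathrm{supp}(\beta)$; at such points $\tilde\phi^{\star}\beta$ need not vanish while $\alpha$ does, and then $\Omega_{\beta}(\tilde\phi_{\sharp}R)\neq\Omega_{\alpha}(R)$. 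You flag an issue "on the support of the pushed-forward currents," but the real problem is in the source $N$, not the target. To repair it you must force $\tilde\phi^{-1}(\mathrm{supp}\,\beta)$ into the region where $\tilde\phi=\phi$, e.g.\ by appending a cutoff coordinate $x\mapsto(\chi(x)\phi(x),\chi(x))\in\mathbb{R}^{m+1}$ and taking $\beta$ supported near the slice $\chi=1$, or by composing with a suitable Lipschitz retraction of $\mathbb{R}^m$. The paper's evaluation sidesteps this entirely: the Lipschitz data $(\alpha,\phi_{i_1},\dots,\phi_{i_n})$ enter only through $\partial S(\alpha,\cdot)$, which by strong locality of currents depends on the $\phi_{i_j}$ only on $\{\alpha\neq 0\}$, so the choice of extension is immaterial.
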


We use the following lemma during the proof.
\begin{lemma}\label{lem:exhaustion}
Let $N$ be a complete Riemannian manifold, with or without boundary. Then there is a smooth $1$-Lipschitz $g \colon N \to \mathbb{R}$ for which the set $g^{-1}( (-\infty,r] )$ is compact for every $r \in \mathbb{R}$.
\end{lemma}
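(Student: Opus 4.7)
The plan is to construct $g$ by smoothing the intrinsic Riemannian distance function from a fixed basepoint.

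Fix any $p \in N$ and set $f(x) = d_N(p, x)$, where $d_N$ denotes the Riemannian length distance. By the triangle inequality, $f$ is $1$-Lipschitz. Since $N$ is complete, the intrinsic distance renders $N$ a complete, locally compact length space, and a Hopf--Rinow--Cohn--Vossen style argument (which applies also when $N$ has boundary) yields that closed bounded subsets of $N$ are compact. Consequently, $f^{-1}((-\infty, r])$ is compact for every $r \in \mathbb{R}$, so $f$ is proper though in general not smooth.

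Next, I would smooth $f$ using a Riemannian approximation theorem for Lipschitz functions---for instance the Greene--Wu smoothing theorem, or the more quantitative results of Azagra--Ferrera--L\'opez-Mesas--Rangel---to produce a smooth $g_0 \colon N \to \mathbb{R}$ with $\sup_N |g_0 - f| \leq 1$ and global Lipschitz constant at most some absolute $L$. Setting $g \coloneqq g_0/L$ gives a smooth $1$-Lipschitz function, and the bound $g_0 \geq f - 1$ gives
\begin{equation*}
    g^{-1}((-\infty, r])
    \subset
    f^{-1}((-\infty, Lr + 1])
    \quad\text{for every $r \in \mathbb{R}$,}
\end{equation*}
which is compact. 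Hence $g$ is proper as required.

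The main technical issue is the smoothing step: one needs a globally (not merely locally) bounded Lipschitz constant, and smoothness up to $\partial N$ when the boundary is nonempty. The boundary case can be addressed by attaching an open collar to obtain a boundaryless complete ambient manifold $\widetilde{N}$, extending $f$ to a $1$-Lipschitz function on $\widetilde{N}$ via \Cref{lemm:mcshane}, smoothing there, and restricting to $N$; once this is in place, the rest is a short scaling and inclusion argument.
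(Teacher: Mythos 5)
Your proposal follows essentially the same route as the paper: start from the distance function to a basepoint, observe it is $1$-Lipschitz and proper by completeness (Hopf--Rinow), smooth it via Greene--Wu, and handle the boundary case by passing to a boundaryless complete extension of $N$ along a collar. The only cosmetic differences are that you rescale after smoothing (whereas the paper normalizes by $1/2$ before smoothing) and that you extend $f$ to the ambient boundaryless manifold $\widetilde{N}$ by McShane, whereas the paper simply builds a fresh exhaustion function $g'$ on $N'$ and composes with the $1$-Lipschitz embedding $\pi \colon N \to N'$; both handle the properness of the restriction because $N$ sits as a closed subset of the enlarged manifold. One point worth flagging explicitly (which you and the paper both leave implicit) is that the collared manifold must be made complete before Greene--Wu applies; this requires re-metrizing the collar near its free end, and the paper outsources this to Pigola--Veronelli.
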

\begin{proof}[Proof of \Cref{lem:exhaustion}]
When $N$ has no boundary, by smoothening the function $h(x) = d( x, x_0 )/2$, $x_0 \in N \setminus \partial N$, we conclude the existence of a smooth $1$-Lipschitz function $g \colon N \to \mathbb{R}$ for which the set $g^{-1}( (-\infty, r] )$ is compact for every $r \in \mathbb{R}$. When $N$ does not have boundary, the existence is justified carefully e.g. in \cite[Corollary 4]{Gree:Wu:79}.

When $N$ does have boundary, we may reduce to the case above as follows. There is a smooth $1$-Lipschitz embedding $\pi \colon N \to N'$ that is a local isometry of the seam and so that $N'$ is a complete Riemannian manifold that has no boundary, after which we construct $g'$ as above for $N'$ instead of $N$ and denote $g = g' \circ \pi$.

The existence of $( N', \pi )$ can be deduced by gluing to $N$ a collar neighbourhood $f \colon [0,1) \times \partial N \rightarrow U$ of $\partial N$ along $\partial N$, then smoothly extending the Riemannian tensor over the seam, and then carefully modifying the obtained tensor near the seam. The interested reader may consult \cite{Pigo:Vero:20} for the details.
\end{proof}

\begin{proof}[Proof of \Cref{lemm:local:to:global}]
Let $g$ be as in \Cref{lem:exhaustion}, and fix any $2$-Lipschitz smooth $h \colon \mathbb{R} \to [0,1]$ that satisfies $h( r ) = 1$ if $r \leq 1$ and $h(r) = 0$ if $r \geq 2$. Now $\psi_m(z) \coloneqq h( 2^{-m}g(z) )$, $z \in N$, is a compactly supported smooth $2^{1-m}$-Lipschitz function, the sequence of which converges to the constant function one uniformly on compact sets and whose differential converges to zero uniformly on compact sets.

Next, we define $\omega_m \coloneqq \psi_m \omega$. Dominated convergence in $\mathcal{L}^{1}( N; \| T \| )$ and $\mathcal{L}^{1}( N; \|S\| )$, respectively, imply that
\begin{align*}
    \Omega_{ \omega }( T )
    &=
    \lim_{ m \rightarrow \infty }
    \Omega_{ \omega_m }( T )
    \quad&&\text{for every $T \in \mathbf{I}_{n}(N)$}
    \quad\text{and}\quad
    \\
    \Omega_{ d\omega }( S )
    &=
    \lim_{ m \rightarrow \infty }
    \Omega_{ d \omega_m }( S )
    \quad&&\text{for every $S \in \mathbf{I}_{n+1}(N)$.}
\end{align*}
Thus the claim follows if we are able to show for every $S \in \mathbf{I}_{n+1}(N)$ that
\begin{equation}\label{eq:compactsupport:case}
    \Omega_{ d\tau }( S ) = \Omega_{ \tau }( \partial S )
    \quad\text{for every compactly-supported smooth $n$-form $\tau$}.
\end{equation}
By the finite additivity of the both sides of \eqref{eq:compactsupport:case} with respect to $\tau$, it suffices to establish \eqref{eq:compactsupport:case} for $\tau$ compactly-supported in $U$ for an arbitrary chart $\phi \colon U \to [0,\infty) \times \mathbb{R}^{ \mathrm{dim}(N) - 1 }$, with a bounded differential. This observation and the finite additivity reduce the claim to the special case $\tau = \alpha \phi^{\star}( dx^{I} )$ with $\alpha$ being smooth and compactly supported in $U$ and $dx^{I}$ being the wedge product of $dx^{ i_1 }, \dots, dx^{ i_{n} }$ for an increasing multi-index $I = ( i_1, \dots, i_{n} )$.

Since $d\tau = d\alpha \wedge \phi^{ \star }( dx^{I} )$, we may apply \eqref{eq:actiondefinition} to $d\tau$ and $S$, together with the definition of $\partial S$, to deduce 
\begin{equation*}
    \Omega_{ d\tau }( S )
    =
    S( 1, (\alpha, \phi_{ i_1 }, \dots, \phi_{ i_n }) )
    =
    \partial S( \alpha, ( \phi_{ i_1 }, \dots, \phi_{ i_n } ) ).
\end{equation*}
Next, by applying \eqref{eq:actiondefinition} to $\tau$ and $\partial S$, we deduce
\begin{equation*}
    \partial S( \alpha, ( \phi_{ i_1 }, \dots, \phi_{ i_n } ) )
    =
    \Omega_{ \tau }( \partial S ).
\end{equation*}
Combining the equalities establishes the claim.
\end{proof}

\begin{lemma}\label{lemm:consistency:Pankka}
Let $M$ be an $n$-dimensional Riemannian manifold, with or without boundary, and $f \in W^{1,n}_{loc}( M, N )$. If $\omega$ is a bounded and smooth differential $n$-form on a complete Riemannian manifold $N$, with or without boundary, then
\begin{equation*}
    f^{\star}\Omega_\omega( T )
    =
    \int_{ E_T } \langle f^{\star}\omega, v_{T} \rangle \theta_{T} \,d\mathcal{H}^n
    \quad\text{for every $T \in \mathbf{I}_{n,c}( \rho_0 )$.}
\end{equation*}
\end{lemma}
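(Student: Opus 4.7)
The plan is to expand $f^\star \Omega_\omega(T) = \Omega_\omega(\widehat f_\star T)$ and then invoke the Lusin--Lipschitz decomposition $(\widehat B_i, \widehat f_i)_{i \geq 0}$ supplied by \Cref{thm:pushforward}. By \eqref{eq:lusinlipschitz} we have $\widehat f_\star T = \sum_{i \geq 1}(\widehat f_i)_\sharp(T\llcorner \widehat B_i)$ pointwise, while the mass-energy inequality \eqref{eq:mass-energy} together with the boundedness of $\omega$ ensure that the series of values $\sum_{i \geq 1} \Omega_\omega((\widehat f_i)_\sharp(T\llcorner \widehat B_i))$ converges absolutely. First I would justify interchanging $\Omega_\omega$ with the sum using dominated convergence in $\mathcal L^1(N; \|\widehat f_\star T\|)$, reducing the proof to verifying, for every $i \geq 1$, the Lipschitz-case identity
\begin{equation*}
    \Omega_\omega\left((\widehat f_i)_\sharp(T\llcorner \widehat B_i)\right)
    =
    \int_{E_T \cap \widehat B_i} \langle f^\star\omega, \tau_T\rangle \theta_T \,d\mathcal H^n.
\end{equation*}

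To establish this Lipschitz-case identity, I would use that $\widehat f_i \colon M \to \mathbb V$ is Lipschitz and $\widehat f_i|_{\widehat B_i} = \widehat f|_{\widehat B_i}$ takes values in $N$ by \eqref{eq:lusinlipschitz}. At $\mathcal H^n$-a.e. point $x \in \widehat B_i$, the Rademacher-type tangential differential $d^{\widehat B_i}\widehat f_i(x) \colon T_x M \to \mathbb V$ exists, and a density-point argument based on $\widehat f(\widehat B_i) \subset N$ shows that its image lies in $T_{\widehat f(x)} N$. Consequently $\widehat f_i^\star \omega$ is a well-defined measurable $n$-form on $\widehat B_i$, and by locality of the approximate differential it agrees $\mathcal H^n$-a.e. on $\widehat B_i$ with $f^\star \omega$ built from the weak differential of $f$. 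Combining this with the area formula applied to $\widehat f_i$ and with the representation \eqref{eq:representation:current} of the integer rectifiable current $(\widehat f_i)_\sharp(T\llcorner \widehat B_i)$ then yields the displayed identity.

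Summing these identities over $i \geq 1$ and using that $\widehat B_0$ is $\mathcal H^n$-negligible while $\|T\| \ll \mathcal H^n$, so $E_T \subset \bigcup_{i \geq 1}\widehat B_i$ up to an $\mathcal H^n$-null set, we obtain
\begin{equation*}
    f^\star \Omega_\omega(T)
    =
    \int_{E_T} \langle f^\star \omega, \tau_T \rangle \theta_T \,d\mathcal H^n,
\end{equation*}
as required. The main obstacle is the Lipschitz-case identity: one must verify that the tangential differential of $\widehat f_i$, a priori valued in the Banach space $\mathbb V$, lands inside $T_{\widehat f(x)} N$ at $\mathcal H^n$-a.e. point of $\widehat B_i$, so that the classical area formula can be matched against the $\wedge_n d^{E_T}h$-term in \eqref{eq:representation:current} and the Sobolev pullback $f^\star\omega$ can be substituted for $\widehat f_i^\star\omega$. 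This reduces to a standard Lebesgue-density argument comparing the rectifiable set $\widehat B_i$ with the smooth structure of $N$ seen through the embedding $N \hookrightarrow \mathbb V$.
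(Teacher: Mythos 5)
Your proposal and the paper's proof share the same backbone — use the Lusin--Lipschitz representation \eqref{eq:lusinlipschitz}, reduce to one Lipschitz piece at a time, and identify the Lipschitz pullback with $f^\star\omega$ $\mathcal H^n$-a.e.\ on each $\widehat B_i$ — but the two executions diverge in ways worth flagging.

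The interchange of $\Omega_\omega$ with the infinite sum is where your justification is off-target. You invoke ``dominated convergence in $\mathcal L^1(N;\|\widehat f_\star T\|)$,'' but the summands $(\widehat f_i)_\sharp(T\llcorner\widehat B_i)$ are supported on possibly overlapping subsets $\widehat f(\widehat B_i)$ of $N$, so their contributions do not decompose $\|\widehat f_\star T\|$ and there is no single $\mathcal L^1(\|\widehat f_\star T\|)$-integrand to dominate. What actually works: locality of minimal weak upper gradients gives $\lip(\widehat f_i)=\widehat\rho$ $\mathcal H^n$-a.e.\ on $\widehat B_i$, so by \eqref{eq:masspushforward} one has $\sum_i M\left((\widehat f_i)_\sharp(T\llcorner\widehat B_i)\right)\le\int_M\widehat\rho^n\,d\|T\|<\infty$; the partial sums then converge to $\widehat f_\star T$ in mass, and $\Omega_\omega$, being additive with bounded upper norm, is mass-Lipschitz — together these justify the interchange. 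The paper instead sidesteps the issue entirely by first truncating to a compact set $E\subset N$ via a tightness estimate (which simultaneously makes the partition of unity of $\omega$ a \emph{finite} sum) and then uses \eqref{eq:lusinlipschitz} literally as a pointwise convergence of currents for each fixed pair $(g,h)$; this is more economical and also handles the fact that $\omega$ only lives in charts on the manifold $N$, an issue your proposal never addresses.

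Your ``Lipschitz-case identity'' is also a genuinely different route from the paper's. You propose an area-formula/density-point argument to show the tangential differential $d^{\widehat B_i}\widehat f_i(x)$ lands in $T_{\widehat f(x)}N$, and that this agrees with $f^\star\omega$. That statement is plausible but not elementary: density of $\widehat B_i$ at $x$ in $\mathcal H^n$ on $M$ does not immediately give density along a prescribed direction, so one needs a cone-density or blow-up argument, and the area formula for the pushforward current brings in multiplicity bookkeeping. The paper avoids both by never computing the data of the pushforward current directly: it plugs the definition of $(\widehat f_i)_\sharp(T\llcorner\widehat B_i)(g,h)$ into the representation \eqref{eq:representation:current} of $T$ itself, applies the chain rule to $h\circ\widehat f_i$ where $h$ is a McShane extension of a chart of $N$ to $\ell^\infty$ (so nothing has to land in $T_{\widehat f(x)}N$), and then matches the two notions of $f^\star\omega$ via the Convent--Van Schaftingen uniqueness result \cite[Proposition 1.5]{Con:VanSch:16}. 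Your approach can very likely be made to work, but the two obstacles you have flagged — the interchange and the tangent-space argument — are exactly what the paper's truncation-plus-chart strategy is designed to circumvent, and as written your proposal leaves both as genuine gaps rather than routine fill-ins.
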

\begin{proof}
We isometrically embed $M$ into $\ell^{\infty}$ for the duration of the following proof. The embedding is only applied implicitly when using the results from \cite{Amb:Kir:00}.

The notation $f^{\star}\omega$ has several equivalent definitions based on various definitions of weak differentials of $f$. We consider two.
\begin{enumerate}
    \item[(1)] We Nash embed $N$ into an Euclidean space and use the usual Euclidean-valued weak differential of Sobolev mappings;
    \item[(2)] We isometrically embed $N$ into $\ell^{\infty}$ and consider a Lusin--Lipschitz decomposition $( B_i, f_i )_{ i = 0 }^{ \infty }$ of $f$. Then every $f_i$, $i \geq 1$ is $\text{weak}^{*}$-differentiable at $\mathcal{H}^n$-almost every $x \in N$ in the sense \cite[Theorem 8.1]{Amb:Kir:00}; the differential denoted by $d^{M}f_i$. We define $df$ so that it coincides with $d^{M}f_i$ in $B_i$ whenever $d^{M}f_i$ exists, and set the differential to be zero otherwise.
\end{enumerate}
We observe that both of these definitions satisfy the axioms of the intrinsic colocal weak derivative \cite[Definition 1.2]{Con:VanSch:16}, so they coincide $\mathcal{H}^{n}$-almost everywhere by the uniqueness result \cite[Proposition 1.5]{Con:VanSch:16}. Definition (2) is directly linked to \eqref{eq:lusinlipschitz}, which is the definition we apply from this point onwards.

Consider a basis representation of $$\omega = \sum_{ j = 1 }^{ \infty } \psi_j \sum_{ I } \omega_{j,I} ( \varphi^{j,I} )^{\star}( \mathrm{Vol}_{ \mathbb{R}^n } )$$ of a bounded differential $n$-form. Here $\mathrm{Vol}_{ \mathbb{R}^n }$ is the Riemannian volume form on $\mathbb{R}^n$, $\varphi^{j} \colon U_{j} \to [0, \infty) \times \mathbb{R}^{ \mathrm{dim}(N) - 1 }$ is a smooth chart, with bounded differential, and $\varphi^{ j, I }$ is the mapping formed from the components $( \varphi^{ j }_{ i_1 }, \dots, \varphi^{ l }_{ i_n } )$ corresponding to the increasing multi-index $I = ( i_1, \dots, i_n )$, for $i_{j} \in \left\{1, \dots, \mathrm{dim}(N)\right\}$. Here the $\psi_j$ are chosen in such a way that $U_j$ contains the support of $\psi_j$ and that $\left\{ \psi_j \right\}_{ j = 1 }^{ \infty }$ is a smooth partition of unity of $N$. We assume that the partition of unity has infinite number of terms since the finite case is simpler and follows with minor modifications.

We consider componentwise McShane extensions of $\psi_j$, $\omega_{j,I}$ and $\varphi^j$ to $\ell^{\infty}$, and use the same notation for the extensions. We define
\begin{equation*}
    f^{\star}( \chi_E \omega )
    =
    \sum_{ j = 1 }^{ \infty } 
    \sum_{ I } 
        \left( \left( \chi_E \psi_j \omega_{j,I} \right) \circ f \right) f^{\star}\left( ( \varphi^{j,I} )^{\star}( \mathrm{Vol}_{ \mathbb{R}^n } ) \right),
    \quad\text{for every Borel $E \subset N$,}
\end{equation*}
by pointwise evaluation using the usual linear algebra pullback, see e.g. \cite[Chapter 1]{Fed:69}, and the weak differential of $f$.

We fix $T \in \mathbf{I}_{n,c}( \rho_0 )$, and observe that the measure
\begin{equation*}
    \mu( E )
    \coloneqq
    \sup_{ x \in N } \|\omega\|(x)
    \int_{ f^{-1}(E) } |df|^{n}(x) \,d\|T\|(x),
    \quad E \subset N
\end{equation*}
is a finite Radon measure. Hence, given $\epsilon > 0$, there is a compact set $E \subset N$ for which $\mu( N \setminus E ) < \epsilon/2$. We deduce from \eqref{eq:mass-energy} and the inequality $| f^{\star}\omega | \leq |df|^n \sup_{ x \in N } \|\omega\|(x)$ that
\begin{align*}
    | \Omega_{ \omega }( f_\star T  ) -  \int_{ E_T } \langle f^{\star} \omega, \tau_T \rangle \theta_T \,d\mathcal{H}^n |
    <
    \epsilon
    +
    \left| \Omega_{ \omega }( (f_\star T) \llcorner E ) - \int_{ E_T \cap f^{-1}( E ) } \langle f^{\star} \omega, \tau_T \rangle \theta_T \,d\mathcal{H}^n \right|.
\end{align*}
The claim follows by setting $\epsilon \rightarrow 0^{+}$ after we show that the second term in the upper bound is always zero for compact $E \subset N$.

Let $J$ denote the indices for which $\psi_{j}|_{E}$ is not identically zero. Observe that $J$ is finite. Then the identity \eqref{eq:lusinlipschitz} implies
\begin{equation*}
    \Omega_{ \omega }( f_\star T \llcorner E )
    =
    \sum_{ j \in J }
    \sum_{ I }
    \sum_{ i = 1 }^{ \infty }
    ( (f_i)_\sharp T\llcorner B_i )( \chi_{E}\psi_j \omega_{j,I}, \phi^{j,I} ).
\end{equation*}
The definition of the pushforward under $f_i$, the identity \eqref{eq:representation:current} and the chain rule for the differential $d^{E_T}( \varphi^{j,I} \circ f_i )$ show
\begin{equation*}
    \sum_{ i = 1 }^{ \infty }
    ( (f_i)_\sharp T\llcorner B_i )( \chi_{E}\psi_j \omega_{j,I}, \phi^{j,I} )
    =
    \int_{ E_T }
        \langle f^{\star}( \chi_{E}\psi_j \omega_{j,I} ( \varphi^{j,I} )^{\star}( \mathrm{Vol}_{ \mathbb{R}^n } ) ), \tau_T \rangle
        \theta_{T}
    \,d\mathcal{H}^n.
\end{equation*}
We conclude that
\begin{equation*}
    \Omega_{ \omega }( f_\star T \llcorner E )
    =
    \int_{ E_T } \langle f^{\star}( \chi_{E}\omega ), \tau_T \rangle \theta_T \,d\mathcal{H}^n.
\end{equation*}
This suffices for the claim.
\end{proof}

\section{Quasiregular curves on singular spaces}\label{sec:singularsetting}
In this section, we consider a complete metric space $N$ and an $n$-dimensional oriented Riemannian manifold $M$, with or without boundary, for $n \geq 2$. Top-dimensional compact Lipschitz submanifolds $\widetilde{M}$ of $M$ are endowed with the orientation induced from $M$, considered as integral currents $[\widetilde{M}]$.

\subsection{Isoperimetric inequalities of Euclidean type}\label{sec:isoperim}
In this section we consider a complete metric space $N$ satisfying various isoperimetric inequalities. For this purpose, given $j = 1,2,\dots$ and $T \in \mathbf{I}_{j}( N )$, we denote
\begin{equation*}
    \mathrm{Fillvol}( T )
    \coloneqq
    \inf\left\{ M(S) \colon S \in \mathbf{I}_{j+1}(X) \,\,\text{and}\,\, \partial S = T \right\}.
\end{equation*}
We say that $N$ supports an \emph{isoperimetric inequality of Euclidean type with dimension $j$, mass bound $M$ and constant $A$} if
\begin{equation}\label{eq:smallmass}
    \text{$T \in \mathbf{I}_{j}( N )$, $M( T ) \leq M$, and $\partial T = 0$}
    \quad\text{imply}\quad
    \mathrm{Fillvol}( T )
    \leq
    A( M(T) )^{ \frac{j+1}{j} }.
\end{equation}
When $M = \infty$, we say that $N$ supports an \emph{isoperimetric inequality of Euclidean type with dimension $j$ and constant $A$}.

We refer the reader to \cite{Wen:07,Bas:Wen:You:21} and references therein for further information about spaces satisfying \eqref{eq:smallmass}. Our main applications of interest are metric spaces $N$ for which there is $\delta, \gamma > 0$ such that subsets of diameter at most $\delta$ are $\gamma$-Lipschitz contractible (see \cite[Section 3.2]{Wen:07} for the precise definition). These include the classes of compact Lipschitz manifolds, Banach spaces, and CAT($\kappa$)-spaces.

Almgren proved, cf. \cite[Theorem 10]{Alm:86}, that when $N = \mathbb{R}^m$ for any $m \geq 1$ and any $j \geq 1$, the optimal Euclidean isoperimetry constants are
\begin{equation}\label{eq:euclidean constant}
    M = \infty \quad\text{and}\quad A_{ j } \coloneqq \omega_{j+1}^{ \frac{ 1 }{ j } } (j+1)^{ \frac{j+1}{j} },
\end{equation}
where $\omega_{j+1}$ is the volume (Lebesgue measure) of the $(j+1)$-dimensional unit ball $\mathbb{B}^{j+1}$ in $\mathbb{R}^{j+1}$.

We prove \Cref{cor:Sob:isoperimetry} after the following lemma.
\begin{lemma}\label{lemm:homology:to:filling}
When $N$ supports an isoperimetric inequality of Euclidean type with dimension $n$, mass bound $M'$ and constant $A'$, and $P \in \mathbf{I}_{n}(N)$ satisfies $M(P) \leq M'/2$, then
\begin{equation*}
    \inf_{ S \in \mathbf{I}_{n+1}(N) }
    M( P + \partial S )
    =
    \mathrm{Fillvol}( \partial P ).
\end{equation*}
In case $N$ supports an isoperimetric inequality of Euclidean type with dimension $n-1$, mass bound $M$ and constant $A$, then
\begin{equation*}
    P \in \mathbf{I}_{n}(N)
    \quad\text{and}\quad
    M( P ) \leq A M^{ \frac{n}{n-1} }
    \quad\text{imply}\quad
    \mathrm{Fillvol}( \partial P )
    \leq
    A \left( M( \partial P ) \right)^{ \frac{n}{n-1} }.
\end{equation*}
\end{lemma}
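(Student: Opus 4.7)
The plan is to deduce both parts formally from the isoperimetric hypothesis \eqref{eq:smallmass}, using at each step the trivial observation that $P$ itself is an admissible filling of $\partial P$ in $\mathbf{I}_{n}(N)$, so that $\mathrm{Fillvol}(\partial P) \leq M(P)$.

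For the first equality, the inequality $\inf_{S} M(P + \partial S) \geq \mathrm{Fillvol}(\partial P)$ is immediate: for every $S \in \mathbf{I}_{n+1}(N)$ the current $P + \partial S$ lies in $\mathbf{I}_{n}(N)$ and has boundary $\partial P$, so it competes in the infimum defining the fillvol. For the reverse inequality I would first note that, since $P$ is itself admissible with mass $M(P)$, the value $\mathrm{Fillvol}(\partial P)$ is unchanged if one restricts the infimum to fillings $P' \in \mathbf{I}_{n}(N)$ satisfying $M(P') \leq M(P)$. For any such $P'$, the difference $P' - P$ is an $n$-cycle of mass at most $M(P) + M(P') \leq 2 M(P) \leq M'$, so \eqref{eq:smallmass} in dimension $n$ produces $S \in \mathbf{I}_{n+1}(N)$ with $\partial S = P' - P$, that is, $P' = P + \partial S$. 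Taking the infimum over such $P'$ then gives $\inf_{S} M(P + \partial S) \leq \mathrm{Fillvol}(\partial P)$, closing the first claim.

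For the second claim, I would split on the size of $M(\partial P)$. If $M(\partial P) \leq M$, the cycle $\partial P \in \mathbf{I}_{n-1}(N)$ satisfies the hypothesis of \eqref{eq:smallmass} in dimension $n-1$, which directly yields $\mathrm{Fillvol}(\partial P) \leq A (M(\partial P))^{n/(n-1)}$. If instead $M(\partial P) > M$, then $P$ itself is an admissible filling of $\partial P$, and the assumption $M(P) \leq A M^{n/(n-1)}$ combined with monotonicity of $t \mapsto t^{n/(n-1)}$ gives $\mathrm{Fillvol}(\partial P) \leq M(P) \leq A M^{n/(n-1)} < A(M(\partial P))^{n/(n-1)}$.

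The only mildly subtle point is the bookkeeping in the first claim: one must observe that restricting the fillvol infimum to $P'$ with $M(P') \leq M(P)$ does not change its value, which is precisely what keeps the cycle $P' - P$ inside the mass regime $M'$ where the dimension-$n$ isoperimetric inequality can be invoked. Everything else is routine manipulation of boundaries and masses.
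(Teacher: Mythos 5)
Your proof is correct and follows essentially the same approach as the paper. For the first claim you use the same key reduction (restrict the fillvol infimum to competitors $P'$ with $M(P') \leq M(P)$, so that the cycle $P'-P$ has mass at most $2M(P) \leq M'$ and the dimension-$n$ isoperimetric inequality applies), and for the second claim your direct case split on whether $M(\partial P) \leq M$ is just the contrapositive formulation of the paper's argument by contradiction.
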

\begin{proof}
Given the isoperimetric inequality of dimension $n$, the equality $\partial T = \partial P$ and $M( T ) \leq M( P ) \leq M'/2$ imply $P-T = \partial S$ for some $S \in \mathbf{I}_{n+1}(N)$. Hence
\begin{equation*}
    \mathrm{Fillvol}( \partial P )
    =
    \inf_{ S \in \mathbf{I}_{n+1}(N) } M( P + \partial S )
    \quad\text{whenever $M(P) \leq M'/2$.}
\end{equation*}
Next, suppose that $N$ supports an isoperimetric inequality of Euclidean type with dimension $n-1$, mass bound $M$ and constant $A$. If $M( P ) \leq A M^{ \frac{n}{n-1} }$, then
\begin{equation*}
    \mathrm{Fillvol}( \partial P )
    \leq
    A\left( M( \partial P ) \right)^{ \frac{n}{n-1} }.
\end{equation*}
Indeed, otherwise
\begin{equation*}
    \mathrm{Fillvol}( \partial P )
    >
    A\left( M( \partial P ) \right)^{ \frac{n}{n-1} },
\end{equation*}
so $M( \partial P ) < M$. These two inequalities then contradict the isoperimetric inequality.
\end{proof}
\begin{corollary}\label{cor:localisoperimetry}
Suppose that $N$ supports an isoperimetric inequality of Euclidean type with dimension $n$, mass bound $M'$ and constant $A'$ and with dimension $(n-1)$, mass bound $M$, and constant $A$, respectively, for some $n \geq 2$.

Suppose that $M$ is an oriented $n$-dimensional Riemannian manifold, with or without boundary, and $f \in W^{1,n}_{loc}( M, N )$ satisfies
\begin{equation*}
    \int_{ M } |df|^{n} \,d\mathcal{H}^n
    \leq
    \min\left\{
        A M^{ \frac{n}{n-1} },
        \frac{ M' }{ 2 }
    \right\}.
\end{equation*}
Then every closed and additive $\Omega \in \mathcal{L}_\infty( \mathbf{I}_{n}(N) )$ with constant upper norm $x \mapsto C$ satisfies
\begin{align*}
    | \Omega( f_\star[\widetilde{M}] ) |
    \leq
    C
    \inf_{ S \in \mathbf{I}_{n+1}(N) }
    M( f_\star[\widetilde{M}] + \partial S )
    \leq
    A C
    \left( \int_{ \partial \widetilde{M} } |df|^{n-1} \,d\mathcal{H}^{n-1} \right)^{ \frac{n}{n-1} }
\end{align*}
whenever $[ \widetilde{M} ] \in \mathbf{I}_{n,c}( \rho_0 )$.
\end{corollary}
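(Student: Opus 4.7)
The plan is to verify the two inequalities separately. The first uses only the closedness of $\Omega$ and its upper norm; the second is a direct application of \Cref{lemm:homology:to:filling} combined with the mass-energy inequality \eqref{eq:mass-energy} from \Cref{thm:pushforward}.

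For the first inequality, I would observe that since $\Omega$ is closed and additive, $\Omega(\partial S) = 0$ and $\Omega(f_\star[\widetilde M] + \partial S) = \Omega(f_\star[\widetilde M])$ for every $S \in \mathbf{I}_{n+1}(N)$. Applying the upper-norm bound with the constant weight $C$ yields $|\Omega(f_\star[\widetilde M])| = |\Omega(f_\star[\widetilde M] + \partial S)| \leq C\, M(f_\star[\widetilde M] + \partial S)$, and taking the infimum over $S$ gives the first inequality.

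For the second inequality, I would set $P \coloneqq f_\star[\widetilde M]$ and aim to apply both halves of \Cref{lemm:homology:to:filling}. The mass-energy inequality \eqref{eq:mass-energy} provides
\begin{equation*}
    M(P) = \|P\|(N) \leq \int_{\widetilde M} |df|^{n}\,d\mathcal{H}^n \leq \min\left\{ A M^{\frac{n}{n-1}}, \frac{M'}{2} \right\},
\end{equation*}
so the hypotheses of both halves of \Cref{lemm:homology:to:filling} are met. The first half identifies $\inf_{S} M(P + \partial S)$ with $\mathrm{Fillvol}(\partial P)$, and the second bounds this filling volume by $A (M(\partial P))^{n/(n-1)}$. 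Finally, the chain-morphism property of $f_\star$ gives $\partial P = f_\star(\partial [\widetilde M]) = f_\star[\partial \widetilde M]$, and another application of the mass-energy inequality at level $n-1$ yields
\begin{equation*}
    M(\partial P) \leq \int_{\partial \widetilde M} |df|^{n-1}\,d\mathcal{H}^{n-1}.
\end{equation*}
Raising this to the power $n/(n-1)$, multiplying by $AC$, and chaining with the first inequality completes the proof.

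The only bookkeeping point worth care is the verification that $[\partial \widetilde M]$ lies in $\mathbf{I}_{n-1,c}(\widehat{\rho}_0)$ so that the mass-energy inequality is available at level $n-1$; this is built into the definition of $p$-exceptionality, since $[\widetilde M] \in \mathbf{I}_{n,c}(\widehat{\rho}_0)$ forces $\int_M \widehat{\rho}_0^{\,n-1}\,d\|\partial [\widetilde M]\| < \infty$, which is exactly the condition defining membership in $\mathbf{I}_{n-1,c}(\widehat{\rho}_0)$ for the current $[\partial \widetilde M]$. No genuine obstacle is anticipated; the corollary is essentially a repackaging of \Cref{lemm:homology:to:filling} through the pushforward apparatus.
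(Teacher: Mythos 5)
Your proposal is correct and follows exactly the route the paper uses: the paper's proof just states, in compressed form, that the energy bound and the mass--energy inequality \eqref{eq:mass-energy} place $f_\star[\widetilde{M}]$ within the scope of both halves of \Cref{lemm:homology:to:filling}, after which closedness and the constant upper norm finish the argument. You have only unpacked those steps, including the observation that $[\partial\widetilde{M}] = \partial[\widetilde{M}] \in \mathbf{I}_{n-1,c}(\widehat{\rho}_0)$ follows directly from the definition of the $p$-exceptional collection together with $\partial\partial = 0$.
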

\Cref{cor:Sob:isoperimetry} is an immediate consequence of \Cref{cor:localisoperimetry}.
\begin{proof}[Proof of \Cref{cor:localisoperimetry}]
Consider a Borel $\rho_0 \in \mathcal{L}^{n}_{+}( M )$ such that $f$ satisfies the conclusion of \Cref{thm:pushforward} in $\mathbf{I}_{n,c}( \rho_0 )$. The upper bound on energy combined with~\eqref{eq:mass-energy} show that $f_\star[ \widetilde{M} ]$ satisfies both of the mass upper bounds of \Cref{lemm:homology:to:filling}. Then the claim follows from \Cref{lemm:homology:to:filling}, the closedness of $\Omega$ and $\Omega$ having the upper norm $x \mapsto C$.
\end{proof}

\subsection{Calibrations and quasiregular curves}
We consider an additive cochain
\begin{equation*}
    \Omega \colon \mathbf{I}_{n}(N) \to \mathbb{R},
\end{equation*}
for $n \in \left\{0,1,\dots\right\}$. Before giving the definition of quasiregular curves, we prove show that cochains with bounded upper norms have $\mathcal{H}^{n}$-essentially unique \emph{minimal upper norms}. To this end, we observe the following.
\begin{lemma}\label{lemm:strongnorm}
Suppose that a cochain $\Omega \colon \mathbf{I}_{n}( N ) \to \mathbb{R}$ has $\mathcal{H}^{n}$-essentially bounded upper norms $g_1$ and $g_2$. Then $g = \min\left\{ g_1, g_2 \right\}$ is an upper norm of $\Omega$.
\end{lemma}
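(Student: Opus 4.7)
The plan is to partition $N$ into $E = \{g_1 \leq g_2\}$ and $E^c$, decompose any $T \in \mathbf{I}_n(N)$ into two integral currents concentrated near $E$ and $E^c$ respectively, and estimate each piece using its upper norm. Because integral currents need not restrict to arbitrary Borel sets as integral currents, I would approximate $E$ from inside by a compact $K$ (using that $\|T\|$ is a finite Radon measure for $T \in \mathbf{I}_n(N)$, by \Cref{lemm:Radon}) and then fatten $K$ to open tubes $V_\epsilon = \{x : \dist(x,K) < \epsilon\}$ on which the Ambrosio--Kirchheim slicing theorem \cite[Theorem 5.7]{AK:00:current} guarantees $T\llcorner V_\epsilon \in \mathbf{I}_n(N)$ for $\mathcal{L}^1$-a.e.\ $\epsilon$.

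First I would extract from subadditivity the quasi-triangle inequality $|\Omega(A+B)| \leq |\Omega(A)| + |\Omega(B)|$ for $A,B \in \mathbf{I}_n(N)$: setting $S = -T$ in the subadditivity axiom together with $\Omega(0)=0$ yields $|\Omega(\pm X)| = |\Omega(X)|$, after which taking $S = -B$ and the running current $A+B$ gives the desired estimate; legality is ensured since $\mathbf{I}_n(N)$ is a $\mathbb{Z}$-module. Next, since $\|T\|\ll\mathcal{H}^n$ for every integral $n$-current, I may replace $g_1, g_2$ with everywhere-bounded Borel representatives without changing any of the relevant integrals nor the upper-norm inequalities.

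Now fix $T \in \mathbf{I}_n(N)$ and $\delta>0$. By inner regularity of $\|T\|$ pick compact $K \subset E$ with $\|T\|(E \setminus K) < \delta$, and apply slicing with the $1$-Lipschitz function $u(x)=\dist(x,K)$ to obtain a sequence $\epsilon_j \searrow 0$ such that $S_j \coloneqq T\llcorner\{u<\epsilon_j\} \in \mathbf{I}_n(N)$; then $R_j \coloneqq T - S_j = T\llcorner\{u \geq \epsilon_j\} \in \mathbf{I}_n(N)$ as well. The quasi-triangle inequality and the two upper norms yield
\begin{equation*}
    |\Omega(T)| \leq |\Omega(S_j)| + |\Omega(R_j)| \leq \int_{\{u<\epsilon_j\}} g_1 \, d\|T\| + \int_{\{u\geq\epsilon_j\}} g_2 \, d\|T\|.
\end{equation*}
Bounded convergence as $j\to\infty$ sends the right-hand side to $\int_K g_1 \, d\|T\| + \int_{N\setminus K} g_2 \, d\|T\|$. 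Writing $N\setminus K = (E \setminus K) \cup E^c$ and bounding $\int_{E\setminus K} g_2 \, d\|T\| \leq \|g_2\|_\infty \delta$, then letting $\delta \to 0$, produces $|\Omega(T)| \leq \int_E g_1 \, d\|T\| + \int_{E^c} g_2 \, d\|T\| = \int_N g\, d\|T\|$.

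The main obstacle is the slicing step: one must check, via \cite[Theorem~5.7]{AK:00:current}, that $T\llcorner V_{\epsilon_j}$ is genuinely integral rather than merely rectifiable with finite mass, so that the decomposition $T = S_j + R_j$ stays within the domain $\mathbf{I}_n(N)$ of $\Omega$. This comes down to observing that the boundary formula $\partial(T\llcorner\{u<\epsilon\}) = (\partial T)\llcorner\{u<\epsilon\} + \langle T,u,\epsilon\rangle$ expresses the boundary as a sum of two integer-rectifiable currents of finite mass for $\mathcal{L}^1$-a.e.\ $\epsilon$, which is exactly the integrality criterion. Once this is secured, the remainder of the argument is purely measure-theoretic.
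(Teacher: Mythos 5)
Your proof is correct and follows essentially the same strategy as the paper: split $N$ along $E = \{g_1 \le g_2\}$, approximate $E$ from inside by a compact $K$, and slice $T$ with the distance-to-$K$ function so that both pieces remain integral, then combine the two upper-norm estimates via subadditivity. The only differences are cosmetic implementation choices — you pass to a sequence of slicing levels $\epsilon_j \searrow 0$ and invoke bounded convergence plus the essential boundedness of $g_2$, while the paper fixes a single good level $t$ after also choosing an open $V \supset E$ with small error integral over $V \setminus K$, and the paper cites the localization lemma \cite[Lemma 5.3]{AK:00:current} together with the closure theorem in place of your appeal to \cite[Theorem 5.7]{AK:00:current}.
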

\Cref{lemm:strongnorm} yields the following lattice property.
\begin{corollary}\label{cor:lattice}
Suppose that a cochain $\Omega \colon \mathbf{I}_{n}( N ) \to \mathbb{R}$ has an $\mathcal{H}^n$-essentially bounded norm. Then the $\mathcal{H}^{n}$-essentially bounded upper norms of $\Omega$ is closed with respect to taking Borel $\mathcal{L}^{\infty}( N; \mathcal{H}^n )$-convex combinations and taking pointwise $\mathcal{H}^{n}$-essential infima.
\end{corollary}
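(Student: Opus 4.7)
My approach is to derive both closure statements from \Cref{lemm:strongnorm}, which asserts that the minimum of two upper norms is again an upper norm. The remaining content is lattice manipulation combined with dominated convergence, for which the key auxiliary fact is that $\|T\| \ll \mathcal{H}^n$ holds for every $T \in \mathbf{I}_n(N)$ (from the very definition of $n$-rectifiability), so that $\mathcal{H}^n$-a.e. inequalities transfer to $\|T\|$-a.e. inequalities and hence to inequalities of integrals against $\|T\|$.

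First, I would treat finite $\mathcal{L}^\infty$-convex combinations. Given essentially bounded upper norms $g_1, \dots, g_k$ together with Borel weights $\lambda_1, \dots, \lambda_k \in \mathcal{L}^\infty(N; \mathcal{H}^n)$ satisfying $\lambda_i \geq 0$ and $\sum_i \lambda_i = 1$ $\mathcal{H}^n$-a.e., iterated application of \Cref{lemm:strongnorm} shows that $h = \min(g_1, \dots, g_k)$ is an upper norm. Pointwise $h \leq \sum_i \lambda_i g_i$ holds $\mathcal{H}^n$-a.e., hence $\|T\|$-a.e. for every $T \in \mathbf{I}_n(N)$, so
\[
    |\Omega(T)|
    \leq
    \int_N h \,d\|T\|
    \leq
    \int_N \sum_i \lambda_i g_i \,d\|T\|.
\]
Truncating and invoking monotone convergence extends the conclusion to countable ($\sigma$-)convex combinations when needed.

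Second, for pointwise $\mathcal{H}^n$-essential infima, write $g = \operatorname{ess\,inf}_{\alpha \in A} g_\alpha$. The order-completeness of $L^\infty(N; \mathcal{H}^n)$ yields a countable subfamily $\{ g_{\alpha_i} \}_{ i = 1 }^{ \infty }$ with $g = \inf_i g_{\alpha_i}$ $\mathcal{H}^n$-a.e., and a Borel representative of $g$ may then be chosen. Setting $h_k = \min(g_{\alpha_1}, \dots, g_{\alpha_k})$ produces a decreasing sequence of upper norms (again by iteration of \Cref{lemm:strongnorm}) uniformly bounded by the constant $C := \| g_{\alpha_1} \|_{ \mathcal{L}^\infty(N; \mathcal{H}^n) }$. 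Since $\|T\|$ is a finite measure, $C$ is $\|T\|$-integrable, so dominated convergence gives
\[
    |\Omega(T)|
    \leq
    \int_N h_k \,d\|T\|
    \longrightarrow
    \int_N g \,d\|T\|,
\]
establishing that $g$ is an upper norm.

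The only mild obstacle is the measure-theoretic bookkeeping: one must select Borel representatives of essential infima and of convex combinations, and repeatedly exploit $\|T\| \ll \mathcal{H}^n$ to pass between $\mathcal{H}^n$-a.e.\ and $\|T\|$-a.e.\ inequalities. Nothing beyond the content of \Cref{lemm:strongnorm} is truly required.
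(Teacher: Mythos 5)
Your proof is correct and follows exactly the route the paper intends: the paper supplies no argument for this corollary, merely remarking that \Cref{lemm:strongnorm} ``yields the following lattice property,'' and your iteration of \Cref{lemm:strongnorm} to get finite minima, followed by domination of a convex combination by the pointwise minimum and a monotone/dominated convergence passage for countable infima, is precisely the intended derivation. The observation that $\|T\| \ll \mathcal{H}^n$ for $T \in \mathbf{I}_n(N)$, so that $\mathcal{H}^n$-a.e.\ pointwise inequalities survive integration against each $\|T\|$, is the right bridge.

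One technical point deserves a flag. When you reduce an arbitrary $\mathcal{H}^n$-essential infimum to a countable subfamily via ``order-completeness of $L^\infty(N;\mathcal{H}^n)$,'' you are implicitly using that essential infima of possibly uncountable families are countably attained in $L^\infty(N;\mathcal{H}^n)$. This is automatic when $\mathcal{H}^n$ restricted to $N$ is $\sigma$-finite (or more generally satisfies the countable chain condition), but for $\mathcal{H}^n$ on a general complete metric space it can fail: the $\mathcal{H}^n$-essential infimum $g$ of the full family could be strictly smaller, on a set of positive $\mathcal{H}^n$- and $\|T\|$-measure, than the infimum over any fixed countable subfamily, and then the inequality $|\Omega(T)| \le \int_N \inf_i g_{\alpha_i}\,d\|T\|$ no longer gives $|\Omega(T)| \le \int_N g\,d\|T\|$. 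The paper leaves this hypothesis implicit; it holds in every application considered there (Euclidean space and Riemannian manifolds), but it is worth stating the countable-attainment step as an assumption rather than as a blanket consequence of order-completeness.
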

Given \Cref{cor:lattice}, we deduce that the collection of $\mathcal{H}^{n}$-essentially bounded upper norms of $\Omega$ is empty or there is a \emph{minimal upper norm}, unique up to $\mathcal{H}^n$-negligible sets. The minimal upper norm is denoted by $\|\Omega\|$. The minimality should be understood as follows: every $\mathcal{H}^{n}$-essentially bounded upper norm of $\Omega$ is bounded from below by $\|\Omega\|$ $\mathcal{H}^n$-almost everywhere.

\begin{proof}[Proof of \Cref{lemm:strongnorm}]
Let $E = \left\{ g_1 \leq g_2 \right\}$ and $T \in \mathbf{I}_{n}( N )$. Given $\epsilon > 0$, the $\mathcal{H}^{n}$-essential boundedness of $g_1$ and $g_2$ yield the existence of a compact set $K \subset E$ and open set $V \supset E$ for which
\begin{equation*}
    \int_{ V \setminus K } g_1 + g_2 \,d\|T\| < \frac{ \epsilon }{ 2 };
\end{equation*}
here we apply the fact that $\| T \|$ is absolutely continuous with respect to $\mathcal{H}^n$ and the fact that $\|T\|$ is Radon. Consider the Lipschitz function $h(z) = d( K, z )$, $z \in N$. Then, by the localization lemma \cite[Lemma 5.3]{AK:00:current}, $\mathcal{L}^{1}$-almost every $t \in \mathbb{R}$ is such that
\begin{equation*}
    T_- \coloneqq T\llcorner{ \left\{ h \leq t \right\}  } \in \mathbf{N}_{n}( N ),
    \quad
    T_+ \coloneqq T\llcorner{ \left\{ h > t \right\}  } \in \mathbf{N}_{n}( N ),
    \quad\text{and}\quad
    T = T_- + T_+.
\end{equation*}
In fact, here $T_{\pm} \in \mathbf{I}_{n}( N )$ by the closure theorem \cite[Theorem 8.5]{AK:00:current} and the construction of $T_+$ from \cite[Lemma 5.3]{AK:00:current}. This can alternatively be deduced from the intrinsic representation theorem of integer-rectifiable currents, cf. \cite[Theorem 9.1]{AK:00:current}.

We consider only those $0 < t$ for which $\left\{ h \leq t \right\} \subset V$. For such $t$, the upper norm inequality gives
\begin{align*}
    | \Omega( T_- ) |
    &\leq
    \int_{ N } g_1 \,d\|T_-\|
    =
    \int_{ \left\{ h \leq t \right\} } g_1 \,d\|T\|
    \leq
    \int_{ E } g_1 \,d\|T\|
    +
    \frac{ \epsilon }{ 2 }.
\end{align*}
By symmetry in the argument and the subadditivity of $\Omega$, we deduce that
\begin{align*}
    | \Omega( T ) |
    \leq
    | \Omega( T_- ) |
    +
    | \Omega( T_{+} ) |
    \leq
    \int_{E} g_1 \,d\|T\|
    +
    \int_{N \setminus E} g_2 \,d\|T\|
    +
    \epsilon.
\end{align*}
Since $\epsilon > 0$ was arbitrarily small and $T \in \mathbf{I}_{n}( N )$ arbitrary, the claim follows.
\end{proof}

\begin{definition}\label{def:QR-curve}
Consider a complete metric space $N$ and a closed $\Omega \in \mathcal{L}_{\infty}( \mathbf{I}_{n}(N) )$.

Given an oriented $n$-dimensional Riemannian manifold $M$, with or without boundary, a mapping $f \colon M \to N$ is a \emph{$K$-quasiregular $\Omega$-curve} if $f \in W^{1,n}_{loc}( M, N )$ and there is a Borel $\rho_0 \in \mathcal{L}^{n}_+( M )$ with
\begin{equation}\label{eq:distortioninequality:copy}
    \int_{ \widetilde{M} }
        \|\Omega\| \circ f
        | df |^n
    \,d\mathcal{H}^n
    \leq
    K \Omega( f_\star [ \widetilde{M} ] )
    \quad\text{for $[\widetilde{M}] \in \mathbf{I}_{n,c}( \rho_0 )$.}
\end{equation}
Given constants $0 < c < C$, the pair $( \Omega, f )$ is \emph{$(C,c)$-calibrated} if $x \mapsto C$ is an upper norm of $\Omega$ and
\begin{equation}\label{eq:boundedfrombelow}
    c |df|^n
    \leq
    ( \|\Omega\| \circ f ) |df|^n
    \quad\text{$\mathcal{H}^n$-almost everywhere.}
\end{equation}
\end{definition}

\begin{remark}[Consistency]\label{rem:cons}
When $N$ is a complete Riemannian manifold and $\omega$ a closed and bounded differential form, the form induces a closed cochain $\Omega_\omega$, cf. \Cref{lemm:local:to:global}, in such a way that the comass of $\omega$ is a minimal upper norm of $\Omega_\omega$. Moreover, \Cref{lemm:consistency:Pankka} shows that the pullback $f^{\star}\Omega_\omega$ is induced by the pullback $f^{\star}\omega$. Now if we add the assumption that $f$ is continuous and $\|\omega\|(x) > 0$ everywhere in $N$, the definition above coincides with the one by Pankka \cite{Pan:20}.
\end{remark}

 The lower bound \eqref{eq:boundedfrombelow} is motivated by the calibrated geometries of Harvey and Lawson in \cite{Har:Law:82}. The connection is highlighted by the following proposition, the proof of which is immediate from the definition above.
\begin{proposition}\label{prop:quasiminimality}
Let $M$ be an $n$-dimensional Riemannian manifold, with or without boundary, $N$ a complete metric space and $f \colon M \to N$ a $K$-quasiregular $\Omega$-curve with $( \Omega, f )$ being $( C, c )$-calibrated. Then
\begin{equation}\label{eq:quasiminimality}
    cM( f_\star[ \widetilde{M} ] )
    \leq
    c\int_{ \widetilde{M} } |df|^n \,d\mathcal{H}^n
    \leq
    K \Omega( f_\star[ \widetilde{M} ] ),
    \quad\text{$[\widetilde{M}] \in \mathbf{I}_{n,c}( \rho_0 )$}.
\end{equation}
\end{proposition}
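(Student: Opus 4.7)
The plan is to chain together three ingredients already established in the paper: the mass--energy inequality \eqref{eq:mass-energy} guaranteed by \Cref{thm:pushforward}, the distortion inequality \eqref{eq:distortioninequality:copy} that defines a $K$-quasiregular $\Omega$-curve, and the pointwise lower bound \eqref{eq:boundedfrombelow} from the $(C,c)$-calibrated hypothesis. Both inequalities in \eqref{eq:quasiminimality} should then fall out without any further machinery. The whole argument lives over a single top-dimensional compact Lipschitz submanifold $\widetilde{M}$, and I will tacitly use that the integral current $[\widetilde{M}]$ has mass measure $\|[\widetilde{M}]\| = \mathcal{H}^n \llcorner \widetilde{M}$, so that integration against $\|[\widetilde{M}]\|$ is literally integration over $\widetilde{M}$ with respect to $\mathcal{H}^n$.

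For the left inequality of \eqref{eq:quasiminimality}, I would apply \eqref{eq:mass-energy} with $T = [\widetilde{M}]$, degree $k = n$, and $E = N$. This gives $M(f_\star[\widetilde{M}]) = \|f_\star[\widetilde{M}]\|(N) \leq \int_{\widetilde{M}} |df|^n \,d\mathcal{H}^n$, after which multiplying by $c$ yields the desired bound. For the right inequality, I would first integrate the pointwise inequality \eqref{eq:boundedfrombelow} over $\widetilde{M}$ to obtain $c \int_{\widetilde{M}} |df|^n \,d\mathcal{H}^n \leq \int_{\widetilde{M}} (\|\Omega\| \circ f)|df|^n \,d\mathcal{H}^n$, and then substitute the resulting right-hand side into \eqref{eq:distortioninequality:copy}, which bounds it by $K \Omega( f_\star[\widetilde{M}] )$.

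There is essentially no obstacle to overcome; the work has already been done in setting up the definitions. The only bookkeeping point is that the Borel weight $\rho_0$ appearing in \eqref{eq:distortioninequality:copy} is the same one quantifying the class $\mathbf{I}_{n,c}(\rho_0)$ in the statement, so one is applying \eqref{eq:distortioninequality:copy} in precisely the range where it is assumed to hold. After that, the proof is just a concatenation of two inequalities, and it is fair to label it immediate from the definitions.
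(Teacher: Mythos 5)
Your proposal is correct and matches the paper's intent: the paper explicitly declares the proposition ``immediate from the definition above,'' and your concatenation of the mass--energy inequality \eqref{eq:mass-energy} (left inequality), the calibration lower bound \eqref{eq:boundedfrombelow}, and the distortion inequality \eqref{eq:distortioninequality:copy} (right inequality) is precisely the immediate argument intended. The tacit identification $\|[\widetilde{M}]\| = \mathcal{H}^n \llcorner \widetilde{M}$ is justified since $\lambda_T \equiv 1$ on Riemannian targets and the multiplicity is one.
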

Observe that the smallest and largest terms in \Cref{prop:quasiminimality} imply that $f_\star[ \widetilde{M} ]$ is a \emph{quasiminimizer} of mass in its homology class of integral currents. That is,
\begin{equation*}
    M( f_\star[ \widetilde{M} ] )
    \leq
    Q \inf_{ S \in \mathbf{I}_{n+1}( N ) } M( f_\star[ \widetilde{M} ] + \partial S ),
    \quad\text{for $Q = KC/c$.}
\end{equation*}
This observation is connected to the quasiminimality of $f$ in the sense of Gromov \cite[Definition 6.36]{Gro:07}. The latter inequality in \eqref{eq:quasiminimality}, combined with the mass--energy inequality \eqref{eq:mass-energy}, implies quasiminimality of $f$ in \emph{energy} for Sobolev mappings in the same homology class, cf. \cite[Section 1.1]{Pan:20}. See \Cref{thm:mainresult:2} below for a related result.

The observations above relate quasiregular curves to the area- and energy-minimization problems considered by Lytchak and Wenger in \cite{Lyt:Wen:16,Lyt:Wen:17:energyarea,Lyt:Wen:18:CAT,Lyt:Wen:20}. See also \cite{Gia:Giu:84,Guo:Xia:19} and references therein.

\subsection{Regularity of quasiregular curves}

The following theorem is a simple consequence of the tools developed so far, formulated for quasiregular curves defined on $\mathbb{B}^n$.
\begin{theorem}\label{thm:mainresult:1}
Suppose that $N$ supports an isoperimetric inequality of Euclidean type with dimension $n$, mass bound $M'$ and constant $A'$ and with dimension $(n-1)$, mass bound $M$, and constant $A$, respectively, for some $n \geq 2$.

If $f \colon \mathbb{B}^{n} \to N$ is a $K$-quasiregular $\Omega$-curve, $( \Omega, f )$ is $( C, c )$-calibrated, and
\begin{equation}\label{eq:massupperbound:thm}
    \int_{ \mathbb{B}^n } |df|^n \,d\mathcal{H}^n
    \leq
    \min\left\{
        M^{ \frac{n}{n-1} } A,
        \frac{ M' }{ 2 }
    \right\},
\end{equation}
then $f$ has a continuous representative $\widehat{f}$ such that
\begin{equation*}
    d( \widehat{f}( x ), \widehat{f}( y ) )
    \leq
    C
    \left( \int_{ \mathbb{B}^n } |df|^n(x) \,d\mathcal{H}^n \right)^{1/n}
    | x - y |^{\beta},
    \quad
    x,y \in B\left( 0, 1/2 \right),
\end{equation*}
for $Q = KC/c$, $\beta= \left( Q A / A_{n-1} \right)^{-1}$ and $C = C(n,\beta) > 0$.
\end{theorem}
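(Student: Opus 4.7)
The plan is the classical Morrey iteration: first derive an energy decay
\begin{equation*}
    u(r) \coloneqq \int_{B(x_0, r)} |df|^n \,d\mathcal{H}^n \leq E(2r)^{n\beta}, \qquad E \coloneqq \int_{\mathbb{B}^n}|df|^n \,d\mathcal{H}^n,
\end{equation*}
uniformly in $x_0 \in B(0,1/2)$ and $r \in (0,1/2]$, and then lift this Morrey condition to Hölder continuity via the Kuratowski embedding and a Campanato-type argument.

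Fix $x_0 \in B(0,1/2)$. For almost every $r \in (0,1/2]$ a Fubini argument ensures $[B(x_0,r)] \in \mathbf{I}_{n,c}(\widehat{\rho}_0)$, and by the coarea formula $u$ is absolutely continuous with $u'(r) = \int_{\partial B(x_0,r)} |df|^n \,d\mathcal{H}^{n-1}$. The global energy bound \eqref{eq:massupperbound:thm} together with the mass--energy inequality \eqref{eq:mass-energy} places $f_\star[B(x_0,r)]$ below both mass thresholds required by \Cref{lemm:homology:to:filling} and \Cref{cor:localisoperimetry}. I would then concatenate \Cref{prop:quasiminimality} (quasiminimality), the closedness of $\Omega$ with its constant upper norm $C$, \Cref{lemm:homology:to:filling} (reducing to the filling volume in $N$ and applying the target $(n-1)$-isoperimetric inequality), the mass--energy inequality, and Hölder's inequality on $\partial B(x_0,r)$ to obtain
\begin{align*}
    c\,u(r) &\leq K\Omega(f_\star[B(x_0,r)]) \leq KCA\left(\int_{\partial B(x_0,r)} |df|^{n-1} \,d\mathcal{H}^{n-1}\right)^{\frac{n}{n-1}}\\
    &\leq KCA\, (n\omega_n)^{\frac{1}{n-1}} r\, u'(r).
\end{align*}
Inserting Almgren's sharp Euclidean constant, this rearranges to $u(r) \leq \frac{QA}{nA_{n-1}} r\, u'(r)$ with $Q = KC/c$, and integrating $u'(r)/u(r) \geq n\beta/r$ from $r$ to $1/2$ delivers the target decay with $\beta = A_{n-1}/(QA)$.

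For the passage from energy decay to Hölder continuity, I would compose $f$ with the Kuratowski embedding $\iota \colon N \hookrightarrow L^\infty(N)$. Since $\iota$ is isometric, the minimal upper gradient is preserved and $\iota \circ f \in W^{1,n}(\mathbb{B}^n, L^\infty(N))$ inherits the same growth. A standard Morrey--Campanato argument for Banach-valued Sobolev maps---applying the Euclidean $(1,n)$-Poincaré inequality on a dyadic family of balls around any $x \in B(0,1/2)$ and summing the resulting telescoping series---produces a $\beta$-Hölder continuous representative of $\iota \circ f$ on $B(0,1/2)$ with Hölder seminorm at most $C(n,\beta)\, E^{1/n}$. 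Closedness of $N$ in $L^\infty(N)$ then forces the representative to take values in $N$, giving the claimed $\widehat{f}$.

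The main obstacle I anticipate is the coherent assembly and constant-tracking in the second paragraph: verifying uniformly in $r$ that the mass-threshold hypotheses of \Cref{lemm:homology:to:filling} and \Cref{cor:localisoperimetry} are met, and combining Almgren's sharp Euclidean factor $(n\omega_n)^{1/(n-1)}$ with the target constant $A$ to land on exactly the stated exponent $\beta = A_{n-1}/(QA)$. The remaining ingredients---current representability of $[B(x_0,r)]$ via Fubini, absolute continuity of $u$ via coarea, and the Banach-valued Morrey--Campanato step---are standard.
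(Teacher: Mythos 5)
Your proposal follows the paper's proof essentially verbatim: Fubini to get admissible currents $[B(x_0,r)]$ for a.e.\ $r$, then the chain \Cref{prop:quasiminimality} $\Rightarrow$ \Cref{cor:localisoperimetry} (via \Cref{lemm:homology:to:filling}) $\Rightarrow$ mass-energy $\Rightarrow$ H\"older on $\partial B(x_0,r)$ to obtain the differential inequality $H(r) \leq \frac{1}{n\beta} r H'(r)$, Gr\"onwall to get the Morrey growth $H(s)\leq H(1/2)(2s)^{n\beta}$, and finally the Banach-valued Morrey/Campanato step via a Kuratowski embedding. One small imprecision: the simplification $(n\omega_n)^{1/(n-1)} = 1/(nA_{n-1})$ is purely the definition of $A_{n-1}$, not an application of Almgren's theorem---Almgren enters only through the target's isoperimetric constant $A$---but the resulting exponent $\beta = A_{n-1}/(QA)$ and all constants match the paper.
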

By conformal invariance of the left-hand side of \eqref{eq:massupperbound:thm}, \Cref{thm:mainresult:1} readily implies a local version of the statement and can be applied e.g. on Riemannian manifolds using normal coordinates. Observe, however, that passing to charts typically increases the constant $K$. The conformal invariance also implies \Cref{thm:yes} as an immediate corollary.

\begin{proof}[Proof of \Cref{thm:mainresult:1}]
We fix any Borel $\rho_0 \in \mathcal{L}^{n}_{+}( \mathbb{B}^n )$ for which $f$ satisfies the conclusions of \Cref{thm:pushforward} and \Cref{prop:quasiminimality} whenever $[ \widetilde{M} ] \in \mathbf{I}_{n,c}( \rho_0 )$.

Observe that $f$ satisfies the assumptions of \Cref{cor:localisoperimetry}. Next, we apply \eqref{eq:quasiminimality} together with the conclusion of \Cref{cor:localisoperimetry}. We deduce that
\begin{equation}\label{eq:maintool:pre}
    \int_{ \widetilde{M} } |df|^n \,d\mathcal{H}^n
    \leq
    A Q \left( \int_{ \partial \widetilde{M} } |df|^{n-1} \,d\mathcal{H}^{n-1} \right)^{ \frac{n}{n-1} }
    \quad\text{whenever $[\widetilde{M}] \in \mathbf{I}_{n,c}( \rho_0 )$.}
\end{equation}
Fix any $x \in B( 0, 1/2 )$. For $0 < r \leq 1/2$, consider the functions
\begin{align*}
    H(r) &\coloneqq \int_{ B( x, r ) } |df|^n \,d\mathcal{H}^n,
    \quad
    &&h(r) \coloneqq \int_{ \partial B( x, r ) } |df|^{n-1} \,d\mathcal{H}^{n-1}
    \\
    c(r) &\coloneqq \int_{ \partial B( x, r )  } |df|^{n} \,d\mathcal{H}^{n-1}
    \quad\text{and}\quad
    &&s(r)\coloneqq
    \left( \mathcal{H}^{n-1}( \partial B( x, r ) ) \right)^{\frac{ 1}{ n-1 }}.
\end{align*}
By Fubini's theorem, \eqref{eq:maintool:pre} holds for $B(x,r)$ for almost every $0 < r \leq 1/2$. In other words,
\begin{equation}\label{eq:maintool}
    H( r )
    \leq
    Q
    A
    \left( h( r ) \right)^{ \frac{n}{n-1} }
    \quad\text{for almost every $0 < r \leq 1/2$.}
\end{equation}
Moreover, by Hölder's inequality,
\begin{align}\label{eq:holder}
    h(r)
    \leq
    \left( c(r)  s(r) \right)^{ \frac{n-1}{n} }
    \quad\text{for every $0 < r \leq 1/2$.}
\end{align}
Notice that
\begin{equation}\label{eq:essentialinequality}
    s(r)
    =
    \left( n \omega_n \right)^{ \frac{1}{n-1} }
    r
    =
    \frac{ 1 }{ n A_{n-1} } r. 
\end{equation}
We combine \eqref{eq:maintool}, \eqref{eq:holder}, and \eqref{eq:essentialinequality} to deduce that
\begin{equation*}
    H(r)
    \leq
    Q
    A c(r) s(r)
    =
    \frac{ 1 }{ n \beta }
    r c(r)
    \quad\text{for almost every $0 < r \leq 1/2$}.
\end{equation*}
Observe that $r \mapsto H(r)$ is absolutely continuous and $H'(r) = c(r)$ for almost every $0 < r \leq 1/2$ by Fubini's theorem. Consequently,
\begin{equation}\label{eq:key:PDE}
    H(r)
    \leq
    \frac{ 1 }{ n \beta }
    r
    H'(r)
    \quad\text{for almost every $0 < r \leq 1/2$.}
\end{equation}
Grönwall's lemma and \eqref{eq:key:PDE} imply
\begin{equation}\label{eq:key:exponent}
    H(s)
    \leq
    H( S )
    \left( \frac{ s }{ S } \right)^{ n \beta }
    \quad\text{for every $0 < s < S \leq 1/2$}.
\end{equation}

Next, we denote
\begin{equation*}
    K(x, S)
    \coloneqq
    \left(
        \omega_n
    \right)^{ \frac{n-1}{n} }
    \left(
        H( S )
    \right)^{1/n}
   S^{ - \beta }.
\end{equation*}
Hölder inequality and \eqref{eq:key:exponent} show
\begin{align}
\label{eq:inequality}
    \| |df| \|_{ L^{1}( B(x,s) ) }
    \leq
    \left( \omega_n s^{n} \right)^{ \frac{n-1}{n} }
    \left( H( S ) \left( \frac{ s }{ S } \right)^{ n \beta } \right)^{ \frac{1}{n} }
    =
    K( x, S )
    s^{ (n-1) + \beta }.
\end{align}
We set $S = 1/2$ and notice that
\begin{equation*}
    C\left( 0, 1/2 \right)
    \coloneqq
    ( \omega_{n} )^{ \frac{n-1}{n} }
    \left( \int_{ \mathbb{B}^{n} } |df|^{n} \,d\mathcal{H}^n \right)^{ \frac{1}{n} }
    \left( 1/2 \right)^{ - \beta }
    \geq
    K\left( x, 1/2 \right),
    \quad x \in B\left( 0, 1/2 \right).
\end{equation*}
It is a classical result due to Morrey, see, e.g. \cite[Theorem 7.19, p 165]{Gil:Tru:01}, that~\eqref{eq:inequality} and the uniform upper bound on $K(x, 1/2 )$ imply
\begin{equation*}
    \mathrm{esssup}_{ y, z \in B( 0, 1/2 ), y \neq z }
    \frac{ d(\widehat{f}(y), \widehat{f}( z )) }{ |y-z|^\beta }
    \leq
    C\left( n, \beta \right)
    C\left( 0, 1/2 \right).
\end{equation*}
In fact, the quoted result is formulated only for real-valued functions but considering $N$ as a subset of $L^{\infty}(N)$ and recalling that the image of $f$ is essentially separable, the result readily generalizes. The claim about the existence of a continuous representative of $f$ follows by covering $\mathbb{B}^{n}$ by balls such that the ball with double the radius is contained in $\mathbb{B}^n$ and by doing the obvious modifications in the argument above.
\end{proof}
Next, we show that quasiregular curves are (local) quasiminimizers of energy in the following sense.

\begin{theorem}\label{thm:mainresult:2}
Suppose that $N$ supports an isoperimetric inequality of Euclidean type with dimension $n$, mass bound $M'$ and constant $A'$, for some $n \geq 1$.

If $f \colon \mathbb{B}^{n} \to N$ is a $K$-quasiregular $\Omega$-curve, $( \Omega, f )$ is $( C, c )$-calibrated, and
\begin{equation}\label{eq:massupperbound:thm:new}
    \int_{ \mathbb{B}^n } |df|^n \,d\mathcal{H}^n
    \leq
    \frac{ M' }{ 2 },
\end{equation}
then whenever $g \in W^{1,n}_{loc}( \mathbb{B}^{n}, N )$ is such that $f = g$ $\mathcal{H}^n$-almost everywhere outside a compact set $E \subset \mathbb{B}^n$, the inequality
\begin{equation*}
    \int_{ E } |df|^n \,d\mathcal{H}^n
    \leq
    Q
    \int_{ E } |dg|^n \,d\mathcal{H}^n
\end{equation*}
holds for $Q = KC / c$.
\end{theorem}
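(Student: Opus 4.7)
The plan is to apply the quasiregular inequality on a nested sequence of compact Lipschitz submanifolds $\widetilde{M}_k \supset E$ shrinking down to $E$, and to use the closedness of $\Omega$ together with the $n$-dimensional isoperimetric inequality to identify $\Omega(f_\star[\widetilde{M}_k])$ with $\Omega(g_\star[\widetilde{M}_k])$. Before anything else, I would reduce to the case $\int_E|dg|^n\,d\mathcal{H}^n \leq \int_E|df|^n\,d\mathcal{H}^n < \infty$, since otherwise the conclusion is vacuous (note $Q \geq 1$ from the calibration). Under this reduction, $\int_{\mathbb{B}^n}|dg|^n\,d\mathcal{H}^n \leq M'/2$ as well, because $|df|=|dg|$ $\mathcal{H}^n$-a.e.\ on $\mathbb{B}^n\setminus E$. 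Then I would enlarge the Borel weight $\rho_0 \in \mathcal{L}^n_+(\mathbb{B}^n)$ from Definition \ref{def:QR-curve} so that Theorem \ref{thm:pushforward} applies simultaneously to $f$ and $g$ on $\mathbf{I}_{\star,c}(\rho_0)$, and choose precise representatives $\widehat{f},\widehat{g}$ that coincide pointwise on $\mathbb{B}^n\setminus E$, as permitted by Property (6) in Section \ref{sec:integral}.

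Next, I would produce the sequence $\widetilde{M}_k$, say as a union of closed cubes of side $2^{-k}$ from a generically translated dyadic grid meeting $E$, so that $E \subset \mathrm{int}(\widetilde{M}_k)$, $\mathcal{H}^n(\widetilde{M}_k\setminus E)\to 0$, and $[\widetilde{M}_k]\in \mathbf{I}_{n,c}(\rho_0)$; the last condition on $\partial \widetilde{M}_k$ is secured by Fubini applied over grid translations, using that $\rho_0 \in L^n(\mathbb{B}^n) \subset L^{n-1}(\mathbb{B}^n)$. The boundary $\partial[\widetilde{M}_k] \in \mathbf{I}_{n-1,c}(\rho_0)$ is supported on $\partial\widetilde{M}_k \subset \mathbb{B}^n\setminus E$. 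Refining the Lusin--Lipschitz decompositions of $\widehat{f}$ and $\widehat{g}$ to a common partition and invoking strong locality of currents (pieces inside $E$ contribute nothing since $\|\partial[\widetilde{M}_k]\|(E) = 0$, and outside $E$ the Lipschitz pieces of $\widehat{f}$ and $\widehat{g}$ restrict to the same Sobolev map) yields $f_\star\partial[\widetilde{M}_k] = g_\star\partial[\widetilde{M}_k]$. Hence $f_\star[\widetilde{M}_k] - g_\star[\widetilde{M}_k]$ is a closed integer-rectifiable $n$-current in $N$ whose mass is bounded by $\int_{\widetilde{M}_k}|df|^n + \int_{\widetilde{M}_k}|dg|^n \leq M'$ via \eqref{eq:mass-energy}. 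The $n$-dimensional isoperimetric inequality of Euclidean type with mass bound $M'$ then produces $S \in \mathbf{I}_{n+1}(N)$ with $\partial S = f_\star[\widetilde{M}_k] - g_\star[\widetilde{M}_k]$, and closedness plus additivity of $\Omega$ yield $\Omega(f_\star[\widetilde{M}_k]) = \Omega(g_\star[\widetilde{M}_k])$.

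To conclude, combine the quasiregular inequality \eqref{eq:distortioninequality:copy} with the calibration bound \eqref{eq:boundedfrombelow} and the upper-norm inequality $|\Omega(g_\star[\widetilde{M}_k])| \leq C\,M(g_\star[\widetilde{M}_k]) \leq C\int_{\widetilde{M}_k}|dg|^n\,d\mathcal{H}^n$:
\begin{equation*}
    c\int_{\widetilde{M}_k}|df|^n\,d\mathcal{H}^n \leq K\,\Omega(f_\star[\widetilde{M}_k]) = K\,\Omega(g_\star[\widetilde{M}_k]) \leq KC\int_{\widetilde{M}_k}|dg|^n\,d\mathcal{H}^n.
\end{equation*}
Using $|df|=|dg|$ $\mathcal{H}^n$-a.e.\ on $\widetilde{M}_k\setminus E$ to cancel the common contributions there, this simplifies to
\begin{equation*}
    c\int_E|df|^n\,d\mathcal{H}^n \leq KC\int_E|dg|^n\,d\mathcal{H}^n + (KC-c)\int_{\widetilde{M}_k\setminus E}|dg|^n\,d\mathcal{H}^n,
\end{equation*}
and letting $k\to\infty$ kills the error term by $\mathcal{H}^n(\widetilde{M}_k\setminus E)\to 0$ and integrability of $|dg|^n$, giving the bound with $Q = KC/c$. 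The main obstacle is the locality step $f_\star\partial[\widetilde{M}_k] = g_\star\partial[\widetilde{M}_k]$: the Lipschitz extensions entering the two pushforwards can disagree on $E$, so one must carefully exploit the vanishing of $\|\partial[\widetilde{M}_k]\|$ on $E$ to reduce to regions where the precise representatives literally coincide. A subsidiary technical point is the selection of $\widetilde{M}_k$ inside $\mathbf{I}_{n,c}(\rho_0)$, handled by the generic-grid argument sketched above.
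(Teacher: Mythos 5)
Your argument is correct and follows essentially the same strategy as the paper's proof: fix a common exceptional weight $\rho_0$ absorbing $\{f\neq g\}$, choose a shrinking sequence of admissible integral currents $[\widetilde{M}_k]$ whose boundaries avoid $E$, use strong locality to conclude $f_\star\partial[\widetilde{M}_k]=g_\star\partial[\widetilde{M}_k]$, invoke the $n$-dimensional isoperimetric inequality together with closedness of $\Omega$, combine with the quasiminimality and calibration inequalities \eqref{eq:distortioninequality:copy}--\eqref{eq:boundedfrombelow}, and pass to the limit. The one place you genuinely diverge is how isoperimetry enters: you apply \eqref{eq:smallmass} to the cycle $f_\star[\widetilde{M}_k]-g_\star[\widetilde{M}_k]$ to deduce $\Omega(f_\star[\widetilde{M}_k])=\Omega(g_\star[\widetilde{M}_k])$, which needs a mass bound on \emph{both} pushforwards and therefore the preliminary reduction to $\int_{\mathbb{B}^n}|dg|^n\le M'/2$; the paper instead invokes Lemma \ref{lemm:homology:to:filling}, which only requires $M(f_\star[\widetilde{M}])\le M'/2$ and yields $\inf_S M(f_\star[\widetilde{M}]+\partial S)=\mathrm{Fillvol}(f_\star\partial[\widetilde{M}])\le M(g_\star[\widetilde{M}])$ directly, making the reduction unnecessary. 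Your dyadic-grid construction of $\widetilde{M}_k$ and the explicit cancellation of the contribution over $\widetilde{M}_k\setminus E$ replace, respectively, the paper's smooth exhaustion of $\mathbb{B}^n\setminus E$ and its direct passage to the limit $\widetilde{M}_j\downarrow E$; these are cosmetic variations and both are sound. One small imprecision worth noting: the precise representatives $\widehat f,\widehat g$ need not literally coincide on $\mathbb{B}^n\setminus E$, so the correct formulation (used in the paper) is to enlarge $\rho_0$ so that $\{\widehat f\neq\widehat g\}\subset\{\rho_0=\infty\}$, which forces $\|\partial[\widetilde{M}_k]\|(\{\widehat f\neq\widehat g\})=0$ for admissible $[\widetilde{M}_k]$; your appeal to Property (6) is in the right spirit but should be phrased this way.
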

\Cref{cor:regularity:lip+harm} is an immediate consequence of Theorems \ref{thm:mainresult:1} and \ref{thm:mainresult:2}. Indeed, the modulus of continuity follows from \Cref{thm:mainresult:1}, while $f$ being $n$-harmonic follows from \Cref{thm:mainresult:2}.

\begin{proof}[Proof of \Cref{thm:mainresult:2}]
There is a Borel $\rho_0 \in \mathcal{L}^{n}_+( \mathbb{B}^n )$ so that the pushforwards $f_\star$ and $g_\star$ satisfy the conclusions of \Cref{thm:pushforward}, $f$ the conclusions of \Cref{prop:quasiminimality} on $\mathbf{I}_{n,c}( \rho_0 )$, and that
\begin{equation}\label{eq:failureset}
    \left\{ f \neq g \right\} \subset \left\{ \rho_0 = \infty \right\}.
\end{equation}
Given any compact Lipschitz $n$-dimensional manifold $\widetilde{M} \subset \mathbb{B}^{n}$ with $\partial \widetilde{M} \subset \mathbb{B}^{n} \setminus E$ and $[\widetilde{M}] \in \mathbf{I}_{n,c}( \rho_0 )$, we deduce from \eqref{eq:lusinlipschitz} and \eqref{eq:failureset} that $f_\star[ \partial \widetilde{M} ] = g_\star[ \partial \widetilde{M} ]$.

Observe that the energy upper bound on $f$ implies that $f_\star[ \widetilde{M} ]$ satisfies the first upper bound appearing in \Cref{lemm:homology:to:filling}. Therefore $f_\star[ \partial \widetilde{M} ] = g_\star[ \partial \widetilde{M} ]$ implies
\begin{equation*}
    \inf_{ S \in \mathbf{I}_{n+1}(M) } M( f_\star[ \partial \widetilde{M} ] + \partial S )
    =
    \mathrm{Fillvol}( f_\star[ \partial \widetilde{M} ] )
    \leq
    M( g_\star[ \widetilde{M} ] )
    \leq
    \int_{ \widetilde{M} } |dg|^{n} \,d\mathcal{H}^n.
\end{equation*}
Combining these inequalities with \eqref{eq:quasiminimality} yields
\begin{equation}\label{eq:mainconclusion}
    \int_{ \widetilde{M} } |df|^{n} \,d\mathcal{H}^n
    \leq
    Q \int_{ \widetilde{M} } |dg|^{n} \,d\mathcal{H}^n.
\end{equation}

We recall that $\mathbb{B}^{n} \setminus E$ can be exhausted by open sets $U_j \subset U_{j+1}$ so that $\partial U_j \cap \partial \mathbb{B}^{n} = \partial \mathbb{B}^{n}$ and $\partial U_j \setminus \partial \mathbb{B}^{n}$ is a smooth $(n-1)$-dimensional submanifold of $\mathbb{B}^{n} \setminus E$ contained in
\begin{equation*}
    E_{j} \coloneqq \left\{ x \in \mathbb{B}^{n} \colon d( E, x ) < 2^{-j} \right\},
    \quad\text{cf. \cite[Proposition 8.2.1]{Dan:08}.}
\end{equation*}
Up to a perturbation of $\partial U_j \setminus \partial \mathbb{B}^{n}$ in $E_j \setminus E$ using a tubular neighbourhood of $\partial U_j \setminus \partial \mathbb{B}^{n}$, we may assume that $\widetilde{M}_j \coloneqq \mathbb{B}^{n} \setminus U_j$ induces an integral current in $\mathbf{I}_{n,c}( \rho_0 )$. Then applying \eqref{eq:mainconclusion} for each $j$ and passing to the limit $j \rightarrow \infty$ shows the claim.
\end{proof}

\section*{Acknowledgements}
We thank Eero Hakavuori, Susanna Heikkilä, Pekka Pankka and Kai Rajala for comments on an earlier version of this manuscript. We also thank Jonathan Pim and Athanasios Tsantaris for discussions regarding quasiregular curves.

\bibliographystyle{alpha}

\end{document}